\newtheorem{teo}{Theorem}[section]
\newtheorem{lema}{Lemma}[section]
\newtheorem{cor}{Corollary}[section]
\newtheorem{propo}{Proposition}[section]
\theoremstyle{definition}
\newtheorem{defi}{Definition}[section]
\newtheorem{rek}{Remark}[section]
\newcommand{\supp}{\operatorname{supp}}
\newcommand{\inff}{\operatorname{inf}}
\newcommand{\essinf}{\operatorname{ess\,inf}}
\newcommand{\esssup}{\operatorname{ess\,sup}}
\newcommand{\diam}{\operatorname{diam}}
\newcommand{\ve}{\varepsilon}
\newcommand{\N}{\mathbb{N}}
\newcommand{\R}{\mathbb{R}}
\newcommand{\Z}{\mathbb{Z}}
\newcommand{\M}{\mathcal{M}}
\newcommand{\f}{\varphi}
\newenvironment{proof3}[1][\textit{\textbf{Proof \em(Theorem~\ref{teocentral3})}}]{\textit{#1.} }{\hfill $\Box$}
\newenvironment{proof4}[1][\textit{\textbf{Proof \em(Proposition~\ref{BGT})}}]{\textit{#1.} }{\hfill $\Box$}
\begin{document}

\bigskip

\title {Generic properties of invariant measures of full-shift systems over perfect Polish metric spaces}
\date{}
\author{Silas L. Carvalho$^{*}$  ~~and~ Alexander Condori$^{\dag}$}
\maketitle

{ \small \noindent $^{*}\,$Departamento de Matemática, Universidade Federal de Minas Gerais, Av. Ant\^onio Carlos, 6627, Belo Horizonte, Minas Gerais, PO Box 702, ZIP 31270-901, Brazil. \\ {\em e-mail:
silas@mat.ufmg.br 

\em
{\small \noindent $^{\dag\,}$Instituto de Matem\'atica y Ciencias Afines, Universidad Nacional de Ingeniería, Calle Los Bi\'ologos 245,
Lima 15012, Per\'u. \\ {\em e-mail:
acondori@imca.edu.pe 
\em
\normalsize
\maketitle
%%%%%%%%%%%%%%%%%%%%%%%%
\begin{abstract}
\noindent{
	In this work, we are interested in characterizing typical (generic) dimensional properties of invariant measures associated with the full-shift system, $T$, in a product space whose alphabet is a perfect Polish metric space (thus, uncountable). More specifically, we show that the set of invariant measures with upper  Hausdorff dimension equal to zero and lower packing dimension equal to infinity is a dense $G_\delta$ subset of $\mathcal{M}(T)$, the space of $T$-invariant measures endowed with the weak topology. We also show that the set of invariant measures with upper rate of recurrence equal to infinity and lower rate of recurrence equal to zero is a $G_\delta$ subset of $\mathcal{M}(T)$. Furthermore, we show that the set of invariant measures with upper quantitative waiting time indicator equal to infinity and lower quantitative waiting time indicator equal to zero is residual in $\mathcal{M}(T)$.
}
\noindent
\end{abstract}
{Key words and phrases}.  {\em Full-shift over an uncountable alphabet; Hausdorff dimension; packing dimension; invariant measures; rates of recurrence.}

\noindent {AMSC: 37A05, 37B10, 28A78}

\section{Introduction}

Let $(M,\rho)$ be a complete separable (Polish) metric space, and let $S$ be its $\sigma$-algebra of Borel sets. Now, define $(X,\mathcal{B})$ as the bilateral product of a countable number of copies of $(M,S)$. %, and endow $X$ with some metric compatible with the product topology of $X$.
 Note that $\mathcal{B}$ coincides with the $\sigma$-algebra of the Borel sets in the product topology. Let $d$ be any metric in $X$ which is compatible with the product topology (that is, $d$ induces an equivalent topology). It is straightforward to show that $(X,d)$ is also a Polish metric space. 

 One can define in $X$ the so-called full-shift operator, $T$, by the action
 \[Tx=y,\]
 where $x=(\ldots,x_{-n},\ldots,x_n,\ldots)$, $y=(\ldots,y_{-n},\ldots,y_n,\ldots)$, and for each $i\in\mathbb{Z}$, $y_i=x_{i-1}$. $T$ is clearly an homeomorphism of $X$ onto itself. We choose $d$ in such a way that $T$ and $T^{-1}$ are Lipschitz transformations; set, for instance, for each $x,y\in X$, 
\begin{equation}\label{metric}
  d(x,y) =   \sum_{|n|\geq 0} \frac{1}{2^{|n|}}  \frac{\rho(x_n,y_n)}{1+\rho(x_n,y_n)}.
\end{equation}

Let $\mathcal{M}(T)$ be the space of all $T$-invariant probability measures, endowed with the weak topology (that is the coarsest topology for which the net $\{\mu_\alpha\}$ converges to $\mu$ if, and only if, for each bounded and continuous function $f$, $\int fd\mu_\alpha\rightarrow \int fd\mu$). Since $X$ is Polish, $\mathcal{M}(T)$ is also a Polish metrizable space (see~\cite{Sigmundlibro}).

Given $\mu\in\M(T)$, the triple $(X,T,\mu)$ is called an $M$-valued discrete stationary stochastic process (see~\cite{Parthasarathy1961,Sigmund1971,Sigmund1974}; see also~\cite{FuLu} for a discussion of the role of such systems in the study of continuous self-maps over general metric spaces).

The study of generic properties (in Baire's sense; see Definition~\ref{generic}) of such $M$-valued discrete stationary stochastic processes goes back to the works of Parthasarathy~\cite{Parthasarathy1961} (regarding ergodicity) and Sigmund~\cite{Sigmund1971,Sigmund1974} (regarding positivity of the measure on open sets, zero entropy of the measure for $M=\mathbb{R}$).

In the last decade, some studies about the full-shift system over an uncountable alphabet have been performed; more specifically, we can mention the works about the Gibbs state in Ergodic Optimization (a generalization of several results of the classical theory of thermodynamic formalism). In \cite{Lopes2011}, given an observable $A: M^{\Z} \rightarrow \mathbb{R}$, where $M$ is a connected and compact manifold, and given the temperature $T$, the authors have studied the main properties of the Gibbs state $\hat{\mu}_{\frac{1}{T}A}$, considering the Ruelle operator associated with $\frac{1}{T}A$. They also have analyzed selection problems when $T\rightarrow 0$.  In~\cite{Lopes2015}, the authors have studied  the shift acting on $M^{\N}$ %(their point out that a H\"older potential $A$ defined on $M^{Z}$ is coboundary with a potential in $M^{N}$, in this way the statistical mechanics of interactions on $M^{Z}$ can be understood via the analysis of the similar problem in $M^{\mathrm{N}}$)
and considered a general a priori probability for defining the Ruelle operator. They have studied the pressure problem for a H\"older potential $A$ and its relation with eigenfunctions and eigenprobabilities of the Ruelle operator. They also have analysed the case in which $T\rightarrow 0$ and have shown some selection results. %In~\cite{Cioletti}, the dynamics is given by the left shift mapping. Here, the authors  establish the uniqueness of the equilibrium states for H\"older and Walters potentials $A: M^{\N} \rightarrow \mathbb{R}$ in a more general setting using the Ruelle operator and DLR-Gibbs measures theory, suitably adapted for the one-dimensional one-sided lattice. They also  proved a variational principle for the specific entropy~( see \cite{Cioletti}, for a definition of specific entropy). 

%The case where the potential depends just on the coordinates $(x_0, x_1)$ is carefully analyzed. In this case, they show, and under suitable hypotheses, a Large Deviation Principle, when $T \to 0$, graph properties, etc.

In this work, we are interested in the generic dimensional properties of such $T$-invariant measures, more specifically, in their Hausdorff and packing dimensions (generally called fractal dimensions). %For a dynamic system, knowing the fractal dimensions of its invariant measures provides more accurante information about the system than the mere knowledge of its support and the dimensions of its invariant sets; since the system can stay longer in certain parts of space (the dimension of a measure can be more small than the measure of its support).
Hence, we recall some basic definitions involving Hausdorff and packing measures, giving a special treatment to the packing dimension of a measure defined in a general metric space (open and closed balls in $\R^N$ possess nice regularity properties; for example, the diameter of a ball is twice its radius, and open and closed balls of the same radius have the same diameter. In arbitrary metric spaces, the possible absence of such regularity properties means that the usual measure construction based on diameters can lead to packing measures with undesirable features, as was observed by Cutler in \cite{Cutler1995}). 

For a dynamical system, the fractal dimensions of an invariant measure provide more relevant information than the fractal dimensions of its invariant sets, or even the fractal dimensions of its topological support; the point is that invariant sets and topological supports usually contain superfluous sets (in the sense that they have zero measure). Thus, by establishing the fractal dimensions of invariant measures, one has a more precise information about the structure of the relevant sets (of positive measure) of a dynamical system (see~\cite{Barbaroux,PesinT1995,Pesin1997} for a more detailed discussion).

In what follows, $(X,d)$ is an arbitrary metric space and $\mathcal{B}=\mathcal{B}(X)$ is its Borel $\sigma$-algebra.

\begin{defi}[radius packing $\phi$-premeasure, \cite{Cutler1995}] Let $\emptyset\neq E\subset X$, and let $0 < \delta < 1$. A $\delta\textrm{-}$\emph{packing} of $E$ is a countable collection of disjoint closed balls $\{B(x_k,r_k)\}_k$ with centers $x_k\in E$ and radii satisfying  $0<r_k\leq \delta/2$, for each $k\in\N$ (the centers $x_k$ and radii $r_k$ are considered part of the definition of the $\delta$-packing). Given a measurable function $\phi$, the radius packing ($\phi,\delta$)\textrm{-}premeasure of $E$ is given by the law
\begin{eqnarray*}
P^{\phi}_{\delta}(E)=\sup\left\{\sum _{k=1}^{\infty} \phi(2r_k)\mid \{B(x_k,r_k)\}_k \mbox{ is a } \delta\textrm{-}\mbox{packing} \mbox{ of E}\right\}.
\end{eqnarray*}
Letting $\delta\to 0$, one gets the so-called \emph{radius packing} $\phi\textrm{-}$\emph{premeasure}
\begin{eqnarray*}
P^{\phi}_{0}(E)=\lim_{\delta \to 0} P^{\phi}_{\delta}(E).
\end{eqnarray*}
One sets $P^{\phi}_{\delta}(\emptyset)=P^{\phi}_{0}(\emptyset)=0$.
\end{defi}
It is easy to see that $P^{\phi}_{0}$ is non-negative and monotone. Moreover, $P^{\phi}_{0}$ generally fails to be countably sub-additive. One can,
however, build an outer measure from $P^{\phi}_{0}$ by applying Munroe's Method I construction, described  both in \cite{Munroe} and  \cite{Rogers}. This leads to the following definition.

\begin{defi}[radius packing $\phi\textrm{-}$measure, \cite{Cutler1995}]
The radius packing $\phi\textrm{-}$measure of $E\subset X$ is defined to be 
\begin{eqnarray}
\label{pakmeasure}
P^{\phi}(E)=\inf\left\{ \sum_{k}P^{\phi}_{0}(E_k)\mid E\subset \bigcup_k E_k \right\}.
\end{eqnarray}
The infimum in (\ref{pakmeasure}) is taken over all countable coverings $\{E_k\}_k$ of $E$. It follows that $P$ is an outer measure on the subsets of $X$.
\end{defi}
In an analogous fashion, one may define the Hausdorff $\phi\textrm{-}$measure. The theory of Hausdorff measures in general metric spaces is a well\textrm{-}explored topic; see, for example, the treatise by Rogers~\cite{Rogers}.
\begin{defi}[Hausdorff $\phi\textrm{-}$measure, \cite{Cutler1995}]
For $E\subset X$, the outer measure $H^{\phi}(E)$ is defined by
\begin{eqnarray}
\label{hausmeasure}
H^{\phi}(E)=\lim_{\delta\to 0}\inf \left\{\sum_{k=1}^{\infty} \phi(\diam(E_k))\mid \{E_k\}_k \mbox{ is a } \delta\textrm{-}\mbox{covering} \mbox{ of } E \right\},
\end{eqnarray}
where a $\delta\textrm{-}$\emph{covering} of E  is any countable collection $\{E_k\}_k$ of subsets of $X$ such that, for each $k\in\N$, $E\subset \bigcap_k E_k$ and $\diam(E_k)\leq \delta$. If no such $\delta\textrm{-}$covering exists, one sets $H_{\phi}(E)=\inf \emptyset=\infty$.
\end{defi}

Of special interest is the situation where given $\alpha>0$, one sets $\phi(t)=t^{\alpha}$. In this case, one uses the notation $P^{\alpha}_0$, and refers to $P^{\alpha}_0(E)$ as the $\alpha\textrm{-}$packing premeasure of $E$. Similarly, one uses the notation $P^{\alpha}(E)$ for the packing $\alpha\textrm{-}$measure of $E$, and  $H^{\alpha}(E)$ for the $\alpha\textrm{-}$ Hausdorff measure of $E$.

\begin{defi}[Hausdorff and packing dimensions of a set, \cite{Cutler1995}]
Let $E\subset X$. One defines the Hausdorff dimension of $E$  to be the critical point
\begin{eqnarray*}
\dim_H(E)=\inf\{\alpha>0\mid h^{\alpha}(E)=0\};
\end{eqnarray*}
%and 
%\begin{eqnarray*}
%\dim_H(E)=\inf\{\alpha>0\mid H^{\alpha}(E)=0\}
%\end{eqnarray*}
one defines the packing dimension of $E$ in the same fashion.
\end{defi}

We note that $\dim_H(X)$ or $\dim_P(X)$ may be infinite for some metric space $X$. One can show  that, for each $E\subset X$, $\dim_H(E) \leq \dim_P(E)$ (see Theorem 3.11(h) in \cite{Cutler1995}), and this inequality is in general strict.

\begin{defi}[lower and upper packing and Hausdorff dimensions of a measure,\cite{Mattila}]\label{HPdim}
  Let $\mu$ be a positive Borel measure on $(X,\mathcal{B})$. % be the Borel $\sigma$-algebra of $X$.
  The lower and upper packing and Hausdorff dimension of $\mu$ are defined, respectively, by 
\begin{eqnarray*}
\dim_{K}^-(\mu)&=&\inf\{\dim_{K}(E)\mid \mu( E)>0, ~ E\in \mathcal{B}\},\\
\dim_{K}^+(\mu)&=&\inf\{\dim_{K}(E)\mid \mu(X\setminus E)=0, ~ E\in \mathcal{B}\},
\end{eqnarray*}
where $K$ stands for $H$ (Hausdorff) or $P$ (packing). If $\dim_{K}^-(\mu)=\dim_{K}^+(\mu)$, one denotes the common value by $\dim_{K}(\mu)$. 
\end{defi}

Let $\mu$ be a positive finite Borel measure on $X$. One defines the upper and lower local dimensions of $\mu$ at $x\in X$ by
$$\overline{d}_{\mu}(x)=\limsup_{\ve\to 0}\frac{\log \mu(B(x,\ve))}{\log \ve} ~~\mbox{ and }~~ \underline{d}_{\mu}(x)=\liminf_{\ve\to 0}\frac{\log \mu(B(x,\ve))}{\log \ve},$$ 
if, for every $\ve> 0$, $\mu(B(x;\ve))>0$; if not, $\overline{\underline{d}}_\mu(x):=+\infty$.

The next result shows that the essential infimum of the lower (upper) local dimension of a probability measure equals its lower Hausdorff (packing) dimension, whereas the essential supremum of its lower (upper) local dimension equals its upper Hausdorff (packing) dimension; see Appendix for its  proof. %the result, see Appendix.

\begin{propo} 
\label{BGT} 
Let $\mu$ be a probability measure on $X$. Then,
\begin{eqnarray*}
\mu\textrm{-}\essinf \underline{d}_{\mu}(x)=\dim_H^-(\mu)\leq \mu\textrm{-}\esssup \underline{d}_{\mu}(x)= \dim_H^+(\mu), \\
 \mu\textrm{-}\essinf \overline{d}_{\mu}(x)=\dim_P^-(\mu)\leq \mu\textrm{-}\esssup \overline{d}_{\mu}(x)= \dim_P^+(\mu). 
\end{eqnarray*}
\end{propo}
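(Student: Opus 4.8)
The plan is to reduce the Proposition to four Billingsley/mass–distribution–type estimates for Borel sets and then pass to essential infima and suprema; the two middle inequalities are automatic from $\essinf\le\esssup$ once the four equalities are proved. Precisely, for every Borel $E\subseteq X$ and every $s\ge 0$ I would establish:
\textbf{(A1)} if $\underline{d}_{\mu}(x)\le s$ for all $x\in E$, then $\dim_H E\le s$;
\textbf{(A2)} if $\underline{d}_{\mu}(x)\ge s$ for all $x\in E$ and $\mu(E)>0$, then $\dim_H E\ge s$;
and the packing analogues \textbf{(B1)}, \textbf{(B2)}, obtained by replacing $\underline{d}_{\mu}$, $\dim_H$, $H^{t}$ respectively by $\overline{d}_{\mu}$, $\dim_P$, and the radius packing measure $P^{t}$. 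The only covering device needed is the elementary $5r$-covering lemma, which holds in an arbitrary metric space.

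For \textbf{(A1)} and \textbf{(B1)}, fix $t>s$: the hypothesis means each $x\in E$ admits (arbitrarily small, respectively all sufficiently small) radii $r$ with $\mu(B(x,r))>r^{t}$. For (A1), cover $E$ at scale $\delta$ by such balls $B(x,r_{x})$ with $r_{x}<\delta/10$, use the $5r$-lemma to extract a disjoint subfamily $\{B(x_{k},r_{k})\}_{k}$ with $E\subseteq\bigcup_{k}B(x_{k},5r_{k})$, and estimate $H^{t}_{\delta}(E)\le 10^{t}\sum_{k}r_{k}^{t}<10^{t}\sum_{k}\mu(B(x_{k},r_{k}))\le 10^{t}\mu(X)$, where disjointness is used in the last step; letting $\delta\to 0$ gives $H^{t}(E)<\infty$, hence $\dim_H E\le t$. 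For (B1), on the set $E_{n}=\{x\in E:\mu(B(x,r))>r^{t}\text{ for }0<r\le 1/n\}$ (which increases to $E$) every $\delta$-packing $\{B(x_{k},r_{k})\}_{k}$ with $\delta<1/n$ satisfies $\sum_{k}(2r_{k})^{t}=2^{t}\sum_{k}r_{k}^{t}<2^{t}\sum_{k}\mu(B(x_{k},r_{k}))\le 2^{t}\mu(X)$ --- the decisive point being that, because we work with the \emph{radius} packing premeasure, the weight $\phi(2r_{k})=(2r_{k})^{t}$ of a packing ball is directly dominated by its $\mu$-mass, and packing balls are disjoint. Thus $P^{t}_{0}(E_{n})<\infty$, so $\dim_P E_{n}\le t$, and $\dim_P E=\sup_{n}\dim_P E_{n}\le t$ by countable stability of packing dimension (immediate from $P^{\alpha}$ being an outer measure). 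Letting $t\downarrow s$ proves both.

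For \textbf{(A2)} one argues dually: with $t<s$, choose $n$ with $\mu(E_{n})>0$, where now $E_{n}=\{x\in E:\mu(B(x,r))\le r^{t}\text{ for }0<r\le 1/n\}$; for any $\delta$-cover $\{U_{k}\}$ of $E_{n}$ with $\delta<1/(2n)$ and $y_{k}\in U_{k}\cap E_{n}$ one has $U_{k}\subseteq B(y_{k},2\diam U_{k})$, so $\mu(U_{k})\le 2^{t}(\diam U_{k})^{t}$; summing gives $H^{t}(E_{n})\ge 2^{-t}\mu(E_{n})>0$, whence $\dim_H E\ge t$. The step \textbf{(B2)} is the delicate one. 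Fix $t<t'<s$ and $\ve>0$: since $\overline{d}_{\mu}(x)\ge s>t'$, each $x\in E$ has arbitrarily small $\rho$ with $\mu(B(x,\rho))<\ve\,\rho^{t'}$. Given any $F\subseteq E$ with $\mu(F)>0$ and any $\delta>0$, apply the $5r$-lemma to a one–ball–per–point family $\{B(x,r_{x})\}_{x\in F}$ with $r_{x}\le\delta/10$ and $\mu(B(x,5r_{x}))<\ve(5r_{x})^{t'}$ (such radii exist for each $x$); this yields a disjoint subfamily $\{B(x_{k},r_{k})\}_{k}$ --- a genuine $\delta$-packing of $F$ --- with $F\subseteq\bigcup_{k}B(x_{k},5r_{k})$, so
\[
\mu(F)\le\sum_{k}\mu(B(x_{k},5r_{k}))<\ve\,5^{t'}\sum_{k}r_{k}^{t'}\le\ve\,5^{t'}\sum_{k}r_{k}^{t},
\]
using $r_{k}<1$ and $t<t'$. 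Hence $P^{t}_{\delta}(F)\ge\sum_{k}(2r_{k})^{t}\ge 2^{t}5^{-t'}\ve^{-1}\mu(F)$ for every $\delta$, so $P^{t}_{0}(F)=\infty$ after letting $\ve\downarrow 0$. Since any countable cover of $E$ contains a member of positive measure, Munroe's Method~I construction then gives $P^{t}(E)=\infty$, so $\dim_P E\ge t$; let $t\uparrow s$. The crucial feature is that we never invoke the strong Vitali or Besicovitch covering theorems --- which may fail in a general metric space --- since a disjoint family whose $5$-dilation merely covers $F$ is enough, and the $\limsup/\liminf$ in the local dimensions supplies enough ``good scales'' to impose the density bound on the \emph{dilated} balls; this is exactly what Cutler's radius-based packing premeasure is tailored for.

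It remains to assemble. Using that $\{\underline{d}_{\mu}>s\}$, $\{\underline{d}_{\mu}<s\}$ and their $\overline{d}_{\mu}$-counterparts are Borel (a routine lemma: $x\mapsto\mu(B(x,r))$ is lower semicontinuous, and its monotonicity and left-continuity in $r$ let one reduce to rational radii and exponents), the four equalities follow by bookkeeping. For instance, if $s<\essinf\underline{d}_{\mu}$ then $\underline{d}_{\mu}\ge s$ $\mu$-a.e., so every Borel $E$ with $\mu(E)>0$ has $\mu(E\cap\{\underline{d}_{\mu}\ge s\})>0$ and (A2) gives $\dim_H E\ge s$; if $s>\essinf\underline{d}_{\mu}$ then $\{\underline{d}_{\mu}<s\}$ has positive measure and (A1) gives it Hausdorff dimension $\le s$. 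Hence $\dim_H^{-}(\mu)=\essinf\underline{d}_{\mu}$; the identical argument with full-measure sets gives $\dim_H^{+}(\mu)=\esssup\underline{d}_{\mu}$, and (B1), (B2) yield the two packing equalities verbatim, the middle inequalities then reading $\essinf\le\esssup$. The single genuine obstacle is (B2): obtaining the packing lower bound in an arbitrary metric space without Besicovitch-type geometry --- handled, as above, by the radius packing premeasure together with the scale freedom in the hypotheses --- while everything else is the standard Billingsley/mass-distribution machinery transcribed to metric spaces, plus the routine measurability lemma.
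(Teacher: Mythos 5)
Your argument is correct, and its skeleton is the same as the paper's: reduce the four equalities to Billingsley-type density bounds for Borel sets and then do the $\essinf$/$\esssup$ bookkeeping (the middle inequalities being automatic). The difference is where the density bounds come from: the paper simply invokes Corollary~3.20(a),(b) of Cutler's work on radius packing measures in general metric spaces (and the analogous Hausdorff facts), whereas you re-derive them from scratch. Your (A1), (A2), (B1) are the standard mass-distribution arguments, and your (B2) is essentially Cutler's density theorem: the decisive observation --- that the \emph{radius-based} packing premeasure lets one convert a disjoint family produced by the $5r$-covering lemma directly into a $\delta$-packing, so no Besicovitch/Vitali-type covering theorem is needed --- is exactly the point of the reference the paper leans on. So your proof buys self-containedness (and makes transparent why the radius convention matters), at the cost of length; the paper buys brevity by citation. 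Three micro-points to tidy, none of them gaps: (i) the claim that $\overline{d}_{\mu}(x)\ge s>t'$ yields arbitrarily small $\rho$ with $\mu(B(x,\rho))<\ve\rho^{t'}$ needs an intermediate exponent $t''\in(t',s)$ (take $\rho<\ve^{1/(t''-t')}$), or alternatively drop $\ve$ and use $r_k\le\delta/10$ to get the factor $\delta^{t-t'}\to\infty$; (ii) the countability of the disjoint subfamily from the $5r$-lemma is automatic in (A1) because the selected balls have positive $\mu$-measure, and in (B2) it follows from separability of $X$ --- harmless in the paper's Polish setting, but worth saying if you claim full generality; (iii) in the Method~I step, what you need is that some member $E_k$ of the cover satisfies $\mu^*(E_k\cap E)>0$ (outer measure of the intersection with $E$), and then apply your estimate to $F=E_k\cap E\subseteq E$ and use monotonicity of $P^{t}_{0}$, rather than ``a member of positive measure''.
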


We are also interested in the polynomial returning rates of the $T$-orbit of a given point to arbitrarily small neighborhoods of itself (this gives, in some sense, a quantitative description of  Poincar\'e's recurrence). This question was posed and studied by Barreira and Saussol in \cite{Barreiral2001} (see also~\cite{Hongwei,Barreira2002,Hu-Xue-Xu} for further motivations). Given a separable metric space $X$ and a Borel measurable transformation $T$, they have defined the lower and upper recurrence rates of $x\in X$ in the following way: for each fixed $r>0$, let
\begin{eqnarray*}
\tau_r(x)=\inf\{k\in\N\mid T^k x \in \overline{B}(x,r)\}
\end{eqnarray*}
be the  return time of a point $x \in X$ into the closed ball $\overline{B}(x, r)${\footnote{Actually, the definition presented in~\cite{Barreiral2001} uses open balls; it is straightforward to show that they coincide.}}; then, \begin{eqnarray*}
\underline{R}(x)=\liminf_{r\to 0}\frac{\log \tau_r(x)}{-\log r} ~~\mbox{ and } ~~  \overline{R}(x)=\limsup_{r\to 0}\frac{\log \tau_r(x)}{-\log r}
\end{eqnarray*}
are, respectively, the lower and upper recurrence rates of $x \in X$. Note that $\tau_r(x)$ may be infinite on a set of zero $\mu$-measure.

Barreira and Saussol have showed (Theorem 2 in \cite{Barreiral2001}) that $\underline{R}(x)\leq \dim_H^+(\mu)$ for $\mu$-a.e.$\;x\in X$. They also have showed, when $X\subset\R^n$,  %and $\mu$ is an invariant measure of some measurable transformation  $f:X\to X$,
that $\underline{R}(x)\leq \underline{d}_{\mu}(x)$, and that $\overline{R}(x)\leq \overline{d}_{\mu}(x)$ for $\mu$-a.e.$\;x\in X$.  Later, Saussol has showed in \cite{Saussol} that, under the hypotheses that $T$ is a Lipschitz transformation, $h_{\mu}(T)>0$, and that the decay of the correlations of $(X,T,\mu)$ is super-polynomial, $\underline{R}(x)= \underline{d}_{\mu}(x)$, and $\overline{R}(x)= \overline{d}_{\mu}(x)$ for $\mu$-a.e.$\;x\in X$. In fact, it is known (see~\cite{Galatolo2011} for the precise statement) that if the decay of the correlations of $(X,T,\mu)$ is super-polynomial with respect to Lipschitz observables and $d_\mu(x):=\underline{d}_{\mu}(x)=\overline{d}_{\mu}(x)$ for $\mu$-a.e.$\;x\in X$, then $\overline{R}(x)= \underline{R}(x)=d_{\mu}(x)$ for $\mu$-a.e.$\;x\in X$. Indeed, for several rapidly mixing (``chaotic'') systems, the quantitative waiting time indicators are $\mu$-a.e.~equal to the local dimension of $\mu$ (see~\cite{Galatolo2011} for details and more references).

In this work, we present some results, for $M$-valued discrete stationary stochastic processes, relating $\underline{R}$ (respectively, $\overline{R}$) and $\underline{d}_{\mu}$ (respectively, $\overline{d}_{\mu}$).

Another dynamical aspect of $M$-valued discrete stationary stochastic processes that is explored in this work refers to the quantitative waiting time indicators, defined by Galatolo in~\cite{Galatolo} as follows: let $x,y\in X$ and let $r>0$. The first entrance time of $\mathcal{O}(x):=\{T^ix\mid i\in\mathbb{Z}\}$, the $T$-orbit of $x$, into the closed ball $\overline{B}(y,r)$ is given by
\begin{eqnarray*}
\tau_r(x,y)=\inf\{n\in \N \mid T^n(x)\in \overline{B}(y,r) \}
\end{eqnarray*} 
(note that $\tau_r(x,y)$ may be infinite on a set of zero $\mu\times\mu$-measure).

Naturally, $\tau_r(x,x)$ is just the first return time into the closed ball $\overline{B}(x,r)$. The so-called quantitative waiting time indicators are defined as
\begin{eqnarray*}
\underline{R}(x,y)=\liminf_{r\to 0}\frac{\log \tau_r(x,y)}{-\log r} ~~\mbox{ and } ~~  
\overline{R}(x,y)=\limsup_{r\to 0}\frac{\log \tau_r(x,y)}{-\log r}.
\end{eqnarray*}

Let $(X, T)$ be a dynamical system such that $X$ is a separable metric space and  $T : X \rightarrow X$ is a measurable map, and suppose that there exists a $T$-invariant measure $\mu$. Then, Theorem~4 in~\cite{Galatolo} states that, for each fixed $y\in X$,  one has
\begin{equation}\label{Gala} \underline{R}(x,y)\geq \underline{d}_{\mu}(y)\qquad
  \textrm{and}\qquad \overline{R}(x,y)\geq \overline{d}_{\mu}(y)\qquad \textrm{for}\;\;\mu\textrm{-a.e.}\,\;x\in X.
\end{equation}

Furthermore, even if $\mu$ is only a probability measure on $X$, Theorem~10 in~\cite{Galatolo} states that for each $x\in X$, one has $\underline{R}(x,y)\geq \underline{d}_{\mu}(y)$ and $\overline{R}(x,y)\geq \overline{d}_{\mu}(y)$ for $\mu$-a.e.$\;y\in X$.

Before we present our main results, some preparation is required. Recall that a subset $\mathcal{R}$ of a topological space $X$ is residual if it contains the intersection %$\bigcap_{k\in \N}U_k$
of a countable family, $\{U_k\}$, of open and dense subsets of $X$. A topological space $X$ is a Baire space if every residual subset of $X$ is dense in $X$. By Baire's Category Theorem, every complete metric space is a Baire space.

\begin{defi}\label{generic}
A property $\mathbb{P}$ is said to be generic in the space X if there
exists a residual subset $\mathcal{R}$ of $X$ such that each $x\in \mathcal{R}$ satisfies property $\mathbb{P}$.
\end{defi}

Note that, given a countable family of generic properties $\mathbb{P}_1,\mathbb{P}_2, \cdots$, all of them % properties $\mathbb{P}_i$
are simultaneously generic in $X$. This is because the family of residual sets is closed under countable intersections.

We shall prove the following results.

%\textcolor{blue}{
\begin{teo}
\label{teocentral3}
Let $(X,T,\mathcal{B})$ be the full-shift dynamical system over $X=\prod_{-\infty}^{+\infty}M$, where the alphabet $M$ is a perfect Polish metric space. 
\begin{enumerate}
\item[\emph{I.}] The set of ergodic measures, $\M_e$, is residual in $\M(T)$.
\item[\emph{II.}] The set of invariant measures with full support, $C_X$, is a dense $G_{\delta}$ subset of $\mathcal{M}(T)$.
\item[\emph{III.}] The set $HD=\{\mu\in \M(T) \mid \dim_{H}(\mu)=0\}$ is a dense $G_{\delta}$ subset of $\mathcal{M}(T)$.
\item[\emph{IV.}] The set $PD=\{ \mu\in \M(T) \mid \dim_{P}(\mu)= +\infty\}$ is a dense $G_{\delta}$ subset of $\mathcal{M}(T)$.
\item[\emph{V.}] The set $\underline{\mathcal{R}}=\{ \mu\in \M(T) \mid \underline{R}(x)=0,$ for $\mu\textrm{-}a.e.\,x\}$ is a dense $G_{\delta}$ subset of $\mathcal{M}(T)$.
\item[\emph{VI.}] The set $\overline{\mathcal{R}}=\{ \mu\in \M(T) \mid \overline{R}(x)= +\infty,$ for $\mu\textrm{-}a.e.\,x\}$ is a dense $G_{\delta}$ subset of $\mathcal{M}(T)$.
\item[\emph{VII.}] The set $\underline{\mathscr{R}}=\{ \mu\in \M(T) \mid \underline{R}(x,y)=0,$ for $(\mu\times\mu)\textrm{-}a.e.\,(x,y)\in X\times X\}$ is a dense $G_{\delta}$ subset of $\mathcal{M}(T)$.
\item[\emph{VIII.}] The set $\overline{\mathscr{R}}=\{ \mu\in \M(T) \mid \overline{R}(x,y)= +\infty,$ for $(\mu\times\mu)\textrm{-}a.e.\,(x,y)\in X\times X\}$ is residual in $\mathcal{M}(T)$.
\end{enumerate}
\end{teo}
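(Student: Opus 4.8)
The plan is to establish each item by producing a dense $G_\delta$ subset of $\M(T)$ that is contained in it (and, for items~I--VII, that in fact equals it). All eight arguments share a few ingredients, which I would assemble first. \emph{(a)} The periodic measures---uniform measures carried by a single finite $T$-orbit---are dense in $\M(T)$: partition $M$ into Borel cells of small diameter, push $\mu$ forward under the coordinatewise ``quantization'' map (which commutes with $T$) onto a full shift over a finite alphabet, and invoke the classical density of periodic measures there. \emph{(b)} Hence $\M_e$ is dense (periodic measures are ergodic); and $\M_e$ is $G_\delta$, since ergodicity of $\mu$ can be written as ``for every $i$ and every $\ve\in\mathbb{Q}^+$ there is $n$ with $\int\bigl|\tfrac1n\sum_{k<n}f_i\circ T^k-\int f_i\,d\mu\bigr|\,d\mu<\ve$'' for a suitable fixed countable family $\{f_i\}\subset C_b(X)$, each such condition being open because the corresponding map $\mu\mapsto\int\bigl|\cdots\bigr|\,d\mu$ is continuous. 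This is item~I (recovering a result of Parthasarathy). \emph{(c)} For item~II (recovering a result of Sigmund), $C_X=\bigcap_j\{\mu:\mu(V_j)>0\}$ over a countable base $\{V_j\}$ of $X$; it is $G_\delta$ by the lower semicontinuity of $\mu\mapsto\mu(V_j)$ (Portmanteau) and dense because $(1-t)\mu+t\lambda^{\Z}\to\mu$ as $t\to0$ for any fixed fully supported Bernoulli measure $\lambda^{\Z}$.

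For items~V--VIII the key point is that the relevant pointwise set refers only to the dynamics, not to $\mu$, and admits an explicit $G_\delta$ description. For instance $\underline R(x)=0$ iff for all $n,m$ there is $k\geq1$ with $d(T^kx,x)<\min(1/m,k^{-n})$, so $\{x:\underline R(x)=0\}=\bigcap_{n,m}\bigcup_{k}\{x:d(T^kx,x)<\min(1/m,k^{-n})\}$ is a $\mu$-free $G_\delta$ set (continuity of $x\mapsto d(T^kx,x)$); hence $\underline{\mathcal{R}}$, being a countable intersection of sets $\{\mu:\mu(G)>1-1/j\}$ with $G$ open, is $G_\delta$, and it is dense because on a periodic orbit all $\tau_r$ are bounded, so $\underline R\equiv0$ there. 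Similarly $\{x:\overline R(x)=+\infty\}=\bigcap_{N,m}\bigcup_{r<1/m}\bigcap_{k\leq r^{-N}}\{x:d(T^kx,x)>r\}$ is $\mu$-free $G_\delta$, giving the $G_\delta$ part of item~VI; its density requires invariant measures with $\overline R=+\infty$ $\mu$-a.e., which I would build by approximating $\mu$ by an ergodic measure, quantizing it to a finite alphabet, and then ``inflating'' each symbol into a countable cloud of nearby points of $M$ (available since $M$ is perfect) whose coordinate is drawn i.i.d.\ from a fixed distribution of infinite Shannon entropy: the resulting measure is ergodic, arbitrarily close to $\mu$, and of infinite metric entropy, so by Kac's formula and the Ornstein--Weiss return-time theorem $\overline R=+\infty$ a.e. The same reformulations, with $(x,y)$ in place of $x$ and $\mu\times\mu$ in place of $\mu$ (using continuity of $\mu\mapsto\mu\times\mu$), give item~VII (dense $G_\delta$, again via periodic measures); for item~VIII one instead notes that $\{\mu:\dim_P^-(\mu)=+\infty\}=PD\subseteq\overline{\mathscr{R}}$ by Galatolo's inequality $\overline R(x,y)\geq\overline d_\mu(y)$ combined with Fubini, so $\overline{\mathscr{R}}$ contains the dense $G_\delta$ of item~IV and is therefore residual.

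For items~III and~IV, Proposition~\ref{BGT} reduces matters to local dimensions: since $0\leq\dim_H^-\leq\dim_H^+$ and $\dim_P^-\leq\dim_P^+$, one has $\mu\in HD\iff\underline d_\mu(x)=0$ for $\mu$-a.e.\ $x$, and $\mu\in PD\iff\overline d_\mu(x)=+\infty$ for $\mu$-a.e.\ $x$. Writing $\{x:\underline d_\mu(x)=0\}=\bigcap_{n,m}\tilde E_{n,m}(\mu)$ with $\tilde E_{n,m}(\mu)=\bigcup_{q\in(0,1/m)\cap\mathbb{Q}}\{x:\mu(B(x,q))>q^{1/n}\}$, and $\{x:\overline d_\mu(x)=+\infty\}=\bigcap_{N,m}\tilde F_{N,m}(\mu)$ with $\tilde F_{N,m}(\mu)=\bigcup_{q\in(0,1/m)\cap\mathbb{Q}}\{x:\mu(\overline B(x,q))<q^{N}\}$ (each $\tilde E_{n,m}(\mu)$, $\tilde F_{N,m}(\mu)$ being open in $x$ because $(x,\mu)\mapsto\mu(B(x,q))$ is jointly lower semicontinuous and $(x,\mu)\mapsto\mu(\overline B(x,q))$ jointly upper semicontinuous), one gets $HD=\bigcap_{n,m,j}\{\mu:\mu(\tilde E_{n,m}(\mu))>1-1/j\}$ and $PD=\bigcap_{N,m,j}\{\mu:\mu(\tilde F_{N,m}(\mu))>1-1/j\}$. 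Density is then easy: a periodic measure has $\underline d_\mu\equiv0$ on its finite support, hence belongs to every constituent set of $HD$; and a cloud measure with a finite cloud of any prescribed size has $\overline d_\mu$ uniformly large on its support, hence belongs to the corresponding constituent set of $PD$---both can be placed in any neighbourhood of an arbitrary $\mu$.

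The main obstacle, I expect, is the openness of the sets $\{\mu:\mu(\tilde E_{n,m}(\mu))>1-1/j\}$ and $\{\mu:\mu(\tilde F_{N,m}(\mu))>1-1/j\}$ in items~III--IV, where $\mu$ enters both as the integrating measure and inside the set, so Portmanteau's theorem alone is not enough. The argument I would run: given $\mu_0$ in the set, use inner regularity to choose a compact $K_0\subseteq\tilde E_{n,m}(\mu_0)$ of almost full $\mu_0$-mass, cover $K_0$ by finitely many basic open sets $\{z:\mu_0(B(z,q))>q^{1/n}\}$, and enclose $K_0$ in an open set $O_0\subseteq\tilde E_{n,m}(\mu_0)$; then the joint lower semicontinuity of $(\mu,z)\mapsto\mu(B(z,q))$ together with a tube-lemma step produces a neighbourhood of $\mu_0$ on which $O_0\subseteq\tilde E_{n,m}(\mu)$, while the lower semicontinuity of $\mu\mapsto\mu(O_0)$ keeps $\mu(O_0)>1-1/j$ there. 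Making all of this uniform is the delicate point and requires a Prokhorov-type tightness input, since $X$ need not be locally compact; the $\tilde F_{N,m}$ case is parallel, with closed balls and upper semicontinuity. A pervasive but routine auxiliary step throughout---underlying both the cloud estimates and the semicontinuity claims---is the comparison between metric balls in \eqref{metric} and cylinder sets. Finally, since a countable intersection of residual sets is residual, all eight properties hold simultaneously on a residual subset of $\M(T)$.
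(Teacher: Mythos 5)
Your overall architecture coincides with the paper's: write each set as a countable intersection of sets that are open by (semi)continuity/Portmanteau arguments, prove each is dense, use periodic measures for the density in items III, V and VII, and obtain VIII from item IV together with Galatolo's inequality $\overline{R}(x,y)\ge\overline{d}_\mu(y)$ — that last step is literally the paper's proof. The genuine differences are these. (i) You prove I and II directly (ergodicity via Birkhoff-average conditions over a fixed countable family, full support via $(1-t)\mu+t\lambda^{\Z}$), where the paper simply cites Oxtoby--Parthasarathy and Sigmund; this is fine, though note $C_b(X)$ is not separable, so the countable family in I must be chosen measure-determining, not merely ``dense''. (ii) For the $G_\delta$ part of III--IV your tightness/finite-subcover/tube argument is essentially the paper's Proposition~\ref{Gdelta3}, with the joint lower (resp.\ upper) semicontinuity of $(\mu,x)\mapsto\mu(B(x,q))$ (resp.\ $(\mu,x)\mapsto\mu(\overline{B}(x,q))$) replacing the mollified functions $f_{x,\ve}$ of Lemma~\ref{asympt1}; this works, and you correctly identified tightness as the needed input. (iii) The real divergence is the density of IV and VI: the paper routes both through the density of large-entropy ergodic measures (Proposition~\ref{central}, an extension of Sigmund's Theorem~2 via Lemma~\ref{denseperiodic}) combined with the bounds $\dim_P^-(\mu)\ge h_\mu(T)/\log 2$ (Lemma~\ref{dimpos01}, via Brin--Katok/Riquelme) and $\underline{R}(x)\ge h_\mu(T)/\log 2$ (Remark~\ref{Rdenseretpos}, via \cite{Varandas}), whereas you build explicit invariant ``cloud'' perturbations and estimate ball measures and return times by hand. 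For item IV this buys something real: the finite-cloud estimate $\mu(\overline{B}(x,q))\lesssim_{k,\eta} q^{c\log k}$ on the support places the measure in each constituent open set with no entropy theory at all, Baire then gives density of $PD$, and perfectness of $M$ enters exactly where the paper uses it (to find $k$ distinct nearby points).

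The one step stated too loosely to stand as written is the density argument for item VI. From ``infinite metric entropy'' you conclude $\overline{R}=+\infty$ a.e.\ by ``Kac's formula and the Ornstein--Weiss return-time theorem''; but Ornstein--Weiss controls return times to cylinders of a finite-entropy partition, while $\overline{R}$ is defined through metric balls, and a ball return only dominates a cylinder return if distinct partition elements are metrically separated on the relevant coordinates. This is delicate precisely for your countable, accumulating cloud: any finite partition of it has an element accumulating at a point, so the separation needed for the ball-to-cylinder comparison must be engineered (e.g., singletons plus one tail element, with the tail kept at positive distance from the singletons, applied to truncated partitions of entropy $\ge L$ for every $L$), or one must use the dynamical-ball version of Ornstein--Weiss, which is exactly Theorem~A/Proposition~A of \cite{Varandas} that the paper invokes (extended to Polish alphabets as in Remark~\ref{Rdenseretpos}). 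Alternatively you could avoid infinite entropy altogether and argue constituent-wise as you did for IV, using finite clouds and an elementary Borel--Cantelli lower bound on return times of the i.i.d.\ labels, then Baire. Either repair is available, so this is a fixable gap in justification rather than a wrong approach; but Kac plus partition-level Ornstein--Weiss alone does not yield the metric-ball statement.
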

%}

Item I was proved by Oxtoby in \cite{Oxtoby1963} and Parthasarathy in \cite{Parthasarathy1961}, using the fact that the set of $T$-periodic or $T$-closed orbit measures (that is, measures of the form $\frac{1}{k_x}\sum_{i=0}^{k_x-1}\delta_{T^ix}(\cdot)$, where $x$ is a $T$-periodic point of period $k_x$) is dense in $\M(T)$. Sigmund has proved  item II in \cite{Sigmund1974} (see also~\cite{Sigmundlibro}). We have opted to include these results in Theorem~\ref{teocentral3} since they are used in the proofs of items III-VIII, which comprise our main contributions to the problem. The proofs of items III and IV are presented in Section~\ref{TVI-VII}, whereas the proofs of items V-VIII are presented in Section~\ref{TVIII-IX}.

As a direct consequence of Theorem \ref{teocentral3}, we have obtained for typical ergodic measures, that is, for $\mu\in \mathcal{RD}=\underline{\mathcal{R}}\cap \overline{\mathcal{R}}\cap PD \cap HD$, some relations between $\underline{R}$ (respectively, $\overline{R}$) and $\underline{d}_{\mu}$ (respectively, $\overline{d}_{\mu}$), similar to those obtained by Saussol and Barreira (as discussed above).
\begin{cor}
Let $M$ be a perfect Polish metric space, and let $\mu\in \mathcal{RD}\subset \M_e$. Then, $\underline{R}(x)=\underline{d}_{\mu}(x)=0$  and $\overline{R}(x)=\overline{d}_{\mu}(x)=\infty$, for $\mu$-a.e.$\;x\in X$.
\end{cor}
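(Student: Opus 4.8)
The plan is to derive the Corollary purely from Theorem~\ref{teocentral3} and Proposition~\ref{BGT}; no genuinely new computation is needed. First I would fix $\mu\in\mathcal{RD}=\underline{\mathcal{R}}\cap\overline{\mathcal{R}}\cap PD\cap HD$. Since each of $\M_e$, $\underline{\mathcal{R}}$, $\overline{\mathcal{R}}$, $PD$ and $HD$ is residual in the Baire space $\M(T)$ (items I and III--VIII of Theorem~\ref{teocentral3}), the set $\mathcal{RD}\cap\M_e$ is again residual; this is how the inclusion ``$\mathcal{RD}\subset\M_e$'' should be read (the dimensional and recurrence conclusions below will in fact not use ergodicity, so the argument applies to every $\mu\in\mathcal{RD}$).

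Next I would simply read off the two recurrence equalities. By the very definition of the sets $\underline{\mathcal{R}}$ and $\overline{\mathcal{R}}$, membership $\mu\in\underline{\mathcal{R}}\cap\overline{\mathcal{R}}$ gives $\underline{R}(x)=0$ and $\overline{R}(x)=+\infty$ for $\mu$-a.e.\ $x\in X$. It then remains to identify the local dimensions of $\mu$. From $\mu\in HD$ we have $\dim_H^-(\mu)=\dim_H^+(\mu)=0$, so Proposition~\ref{BGT} yields $\mu\textrm{-}\esssup\underline{d}_{\mu}(x)=\dim_H^+(\mu)=0$; since $\underline{d}_{\mu}(x)\geq 0$ for every $x\in X$, an essential supremum equal to $0$ forces $\underline{d}_{\mu}(x)=0$ for $\mu$-a.e.\ $x$. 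Symmetrically, from $\mu\in PD$ we have $\dim_P^-(\mu)=\dim_P^+(\mu)=+\infty$, so Proposition~\ref{BGT} gives $\mu\textrm{-}\essinf\overline{d}_{\mu}(x)=\dim_P^-(\mu)=+\infty$, whence $\overline{d}_{\mu}(x)=+\infty$ for $\mu$-a.e.\ $x$. Combining the four $\mu$-a.e.\ identities produces $\underline{R}(x)=\underline{d}_{\mu}(x)=0$ and $\overline{R}(x)=\overline{d}_{\mu}(x)=+\infty$ for $\mu$-a.e.\ $x\in X$, which is the assertion.

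I do not expect any real obstacle: all the substance is absorbed into Theorem~\ref{teocentral3} (genericity of each of the four properties, which itself rests on the density of periodic-orbit measures together with the dimension and recurrence estimates of Sections~\ref{TVI-VII} and~\ref{TVIII-IX}) and into Proposition~\ref{BGT} (the identification of the essential extrema of the local dimensions with the global Hausdorff and packing dimensions of $\mu$). The only steps demanding even mild care are the two passages from a statement about an essential supremum, resp.\ infimum, of the $[0,+\infty]$-valued functions $\underline{d}_{\mu}$ and $\overline{d}_{\mu}$ to a pointwise $\mu$-a.e.\ statement, and the purely cosmetic remark that ``$\mathcal{RD}\subset\M_e$'' is to be understood up to intersecting with the residual set $\M_e$ (indeed $HD$ alone already contains non-ergodic measures, so the literal inclusion is false, but this is irrelevant to the conclusion).
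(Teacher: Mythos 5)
Your argument is correct and is exactly the paper's intended (unwritten) proof: the corollary is stated there as a direct consequence of Theorem~\ref{teocentral3}, read off via Proposition~\ref{BGT} in precisely the way you do, with the passage from $\mu\textrm{-}\esssup\underline{d}_{\mu}=0$ and $\mu\textrm{-}\essinf\overline{d}_{\mu}=+\infty$ to the pointwise $\mu$-a.e.\ identities being the only (trivial) step to check. Your side remark that the literal inclusion $\mathcal{RD}\subset\M_e$ fails (e.g.\ for convex combinations of measures in $\mathcal{RD}\cap\M_e$) and should be read as ``intersect with the residual set $\M_e$'' is also accurate and harmless to the conclusion.
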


It follows from items III and IV in Theorem \ref{teocentral3} %will be show in section 3 and
that there exists a dense $G_{\delta}$ set, $\mathcal{D}:= PD\cap HD \subset \M_e$, such that each $\mu\in\mathcal{D}$ is somewhat similar, in one hand, to a ``uniformly distributed'' measure, whose lower packing dimension is maximal (for instance, when $X=[0,1]^{\Z}$, the shift Bernoulli measure $\Lambda=\prod_{-\infty}^{+\infty}\lambda$, where $\lambda$ is the Lebesgue measure on $[0,1]$, is an uniformly distributed measure, whose lower packing dimension is infinite) and in the other hand, to a purely point measure, whose upper Hausdorff dimension is zero. 

Moreover, by Definition~\ref{HPdim}, each $\mu\in\mathcal{D}$ is supported on a Borel set $Z=Z(\mu)$ such that $\dim_{top}(Z)\le\dim_{H}(Z)=0<\dim_P(Z)=\infty$, where $\dim_{top}(Z)$ stands for the topological dimension of $Z$ (see, \cite{Hurewicz} Sect. 4, page 107, for a proof of the inequality $\dim_{top}(Z)\le\dim_{H}(Z)$). Since $\dim_{top}(Z)=0$, if one also assumes that $\supp(\mu)=X$ (just take $\mu\in C_X\cap\mathcal{D}$), one concludes that $Z$ is a dense and totally disconnected set in $X$ with zero Hausdorff and infinite packing dimensions. Furthermore, one may take $Z$ as a dense $G_{\delta}$ subset of $X$ (see Proposition~\ref{localgenericpoints}).

\begin{rek}
  It is worth noting that although the packing dimension of $X$ is infinite (since, for each $\mu\in\mathcal{D}$, $\dim_P(Z)=\infty$), its topological dimension may be finite. This is not unexpected, altogether: there are examples of topological spaces where $\dim_P(X)>\dim_{top}(X)$ (see example 2.12 in~\cite{PesinT1995}, where $0=\dim_{top}(X)<\dim_P(X)$).
  \end{rek}

Items V and VI in Theorem \ref{teocentral3} % will be show in section 4 and
say that there exists a dense $G_{\delta}$ set, $\mathcal{R}:=\underline{\mathcal{R}}\cap \overline{\mathcal{R}}\subset \M_e$, of ergodic measures such that if $\mu\in \mathcal{R}$, then there exists a Borel set $Z$, with $\mu(Z)=1$, so that if $x\in Z$, then $\underline{R}(x)=0$ and $\overline{R}(x)=\infty$. This means that given a very large $\alpha$ and a very small $\beta$, for each $x\in Z$, one has $\underline{R}(x)\leq \beta$ and  $\overline{R}(x)\geq \alpha$. So, there exist sequences $(\ve_{k})$ and $(\sigma_{l})$ converging to zero such that, for each $k,l\in\N$, $\tau_{\ve_{k}}(x)\leq \ve_{k}^{-\beta}$ and $\tau_{\sigma_{l}}(x)\geq \sigma_{l}^{-\alpha}$, respectively. Setting, for each $k,l\in\N$, $s_k=1/\ve_k$ and $t_l=1/\sigma_l$, one has $\tau_{1/s_k}(x)\leq s_k^{\beta}$ and $\tau_{1/t_l}(x)\geq t_l^{\alpha}$, respectively.

Therefore, given $x\in Z$, there exists a time sequence (time scale) for which the first incidence of $\mathcal{O}(x)$ to one of its spherical neighborhoods (which depend on time) occurs as fast as possible (that is, it is of order 1; this means that the first return time to those neighborhoods increases sub-polynomially fast); accordingly, there exists a time sequence for which the first incidence of $\mathcal{O}(x)$ to one of its spherical neighborhoods increases as fast as possible (that is, super-polynomially fast).

The following scheme tries to depict how subsequent elements of both sequences are related. Between two consecutive elements of $(\sigma_k)$, there are several elements of $(\ve_l)$:

\begin{center}
\begin{tikzpicture}
\draw (-4.2,0)node {\emph{x}} (-4,0)-- (4.5,0);  
\filldraw [gray] (-4,0) circle (1pt);

\draw (-0.6,-1) node {$\ve_{_{k+1}}$} (-0.6,-0.75) arc (-15:15:3cm);
\draw (-0.9,-0.75) arc (-15:15:3cm);
\draw (-1.2,-0.75) arc (-15:15:3cm);
\draw (-1.5,-0.75) arc (-15:15:3cm);
\draw (-1.8,-0.75) arc (-15:15:3cm);
\draw (-2.1,-0.75) arc (-15:15:3cm);
\draw (-2.4,-0.75) arc (-15:15:3cm);
\draw (-2.7,-0.75) arc (-15:15:3cm);
\draw (-3,-0.75) arc (-15:15:3cm);
\draw (-3.3,-0.75) arc (-15:15:3cm);
\draw (-3.6,-0.75) arc (-15:15:3cm);
%%%%%%%%%%%%%%%

\draw (0.2,-1)node {$\ve_{_{k}}$} (0.2,-0.75) arc (-15:15:3cm);
\draw (0.5,-0.75) arc (-15:15:3cm);
\draw (0.8,-0.75) arc (-15:15:3cm);
\draw (1.1,-0.75) arc (-15:15:3cm);
\draw (1.4,-0.75) arc (-15:15:3cm);
\draw (1.7,-0.75) arc (-15:15:3cm);
\draw (2,-0.75) arc (-15:15:3cm);
\draw (2.3,-0.75) arc (-15:15:3cm);
\draw (2.6,-0.75) arc (-15:15:3cm);
\draw (2.9,-0.75) arc (-15:15:3cm);
\draw (3.2,-0.75) arc (-15:15:3cm);

%%%%%%%%%%%%%%%%%%%

\draw (-2.5,-1.25) arc (-25:25:3cm);

\draw (0.5,-1.5)node {$\sigma_{_{l+1}}$} (0.5,-1.25) arc (-25:25:3cm);

\draw (3.5,-1.5)node {$\sigma_{_{l}}$} (3.5,-1.25) arc (-25:25:3cm);
\end{tikzpicture}

\end{center}

Here, we also show that the typical measures described in Theorem \ref{teocentral3} are supported on the dense $G_{\delta}$ set $\mathfrak{R}=\{x\in X\mid \underline{R}(x)=0 \mbox{ and } \overline{R}(x)=\infty\}$ (Proposition~\ref{genericpoints}).

\begin{rek} We note that there are other examples of systems $(X,T,\mu)$ for which $\overline{R}(x)>\underline{R}(x)$ for $\mu$-a.e.$\;x\in X$ (respectively, $\overline{R}(x,y)>\underline{R}(x,y)$ for $(\mu\times\mu)$-a.e.$\;(x,y)\in X\times X$); this is particularly true if: $X=\mathbb{S}^1$, $T=T_\alpha$ is the translation by $\alpha$, an irrational number of type greater than one, and $\mu$ is the Haar measure over $\mathbb{S}^1$ (see Theorem~6 in~\cite{Galatolo2011}); $(M,S,\mu)$ is a particular case of a skew-product system $M=\Omega\times\mathbb{T}^d$, whose underlying space is a Riemannian manifold, whose mapping $S$ is a skew-product such that the action over the torus is a translation by a number of finite Diophantine type, and whose measure $\mu$ is the product of an equilibrium state of some potential $\psi:\Omega\rightarrow\mathbb{R}$ with the Haar measure over $\mathbb{T}^d$  (see Sections~3 and~4 in~\cite{Galatolo2011} for details).

For these systems, it is also true that $\underline{\overline{R}}(x)\neq \underline{\overline{d}}_\mu(x)$ for $\mu$-a.e.$\;x\in X$ (respectively, $\underline{\overline{R}}(x,y)\neq \underline{\overline{d}}_\mu(y)$ for $(\mu\times\mu)$-a.e.$\;(x,y)\in X\times X$). Nevertheless, it follows from Theorem~\ref{teocentral3} that for a typical invariant measure, $\mu$, of $(X,T)$, $\underline{R}(y,y)=\underline{R}(x,y)=\underline{d}_\mu(y)=0$ for $(\mu\times\mu)$-a.e.$\;(x,y),(y,y)\in X\times X$ (respectively, $\overline{R}(y,y)=\overline{R}(x,y)=\overline{d}_\mu(y)=\infty$ for $(\mu\times\mu)$-a.e.$\;(x,y),(y,y)\in X\times X$).
\end{rek}

Finally, combining items II, VII and VIII of Theorem~\ref{teocentral3}, one concludes that for a typical measure $\mu\in\overline{\mathscr{R}}\cap\underline{\mathscr{R}}\cap C_X$, almost every $T$-orbit $\mathcal{O}(x)$ densely fills the whole space (given that $\mu$ is supported on a dense subset of $X$ and $\underline{R}(x,y)=0$ for $(\mu\times\mu)$-a.e$\;(x,y)\in X\times X$), but not in a homogeneous fashion. Namely, as in the previous analysis, there exists a time scale for which the first entrance time of $\mathcal{O}(x)$ to one of the spherical neighborhoods (which depend on time) of $y$ is of order 1; accordingly, there exists a time sequence for which the first entrance time of the $\mathcal{O}(x)$ to one of the spherical neighborhoods of $y$ increases as fast as possible. Naturally, these time scales depend on the pair $(x,y)\in X\times X$. 

\

\begin{rek} \label{MIR} 
\begin{itemize}~
\item[i)] It is true that the sets defined in items III to VIII of Theorem  \ref{teocentral3} are $G_\delta$ subsets of $\M(T)$ for any topological dynamical system $(X,T)${\footnote{By a topological dynamical system we understand a pair $(X,T)$ such that $X$ is a Polish metric space and $T:X\rightarrow X$ is a continuous transformation.}}; %such that $X$ is Polish and both $T$, $T^{-1}$ are Lipshitz transformations
  this is particularly true for Axiom A systems on smooth compact Riemannian n-manifolds, $(M,T)$, where $f:M\rightarrow M$ is a diffeomorphism and $M$ is an $f$-invariant component of the non-wandering set of $f$ such that $T:=f\restriction M$ is topological transitive (the existence of $M$ is guaranteed by the Spectral Theorem). %: the usual metric is a hyperbolic metric for which both $f$ and $f^{-1}$ are Lipschitz; see Theorem 5.1 in~\cite{Fathi1989}). 

\item[ii)] It is also true that the sets defined in items III, V and VII of Theorem \ref{teocentral3} are dense in $\M(T)$ for any topological dynamical system $(X,T)$ such that %$X$ is a separable metric space and
  the set of $T$-periodic measures is dense in $\M(T)$. This is particularly true for any system satisfying the specification property (see~\cite{Sigmund1974} for a proof of this proposition and examples of systems that satisfy this property; see also \cite{Ren}), or even milder conditions (see~\cite{Gelfert,Hirayama,Kwietniak,Li,Liang} for a broader discussion involving such conditions).
  
\item[iii)] Since the Axiom A systems described in item i) also have a dense set of $T$-periodic measures (here, $X$ stands for a closed $f$-invariant set and $T:=f\restriction X$ is topologically transitive; see~\cite{Sigmund1970}), one may combine both properties and obtain the following result.
\end{itemize}

\begin{teo}
\label{AxiomA}
Let $(X,T)$ be an Axiom A system as described in items i) and iii) above. Then, the set $\{\mu\in \M_e \mid \underline{R}(x,y)=0,$ for $(\mu\times\mu)\textrm{-}a.e.\,(x,y)\in X\times X\}$ is residual in $\mathcal{M}_e$.
\end{teo}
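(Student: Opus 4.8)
The plan is to deduce Theorem~\ref{AxiomA} from the two structural facts already established for the Axiom A systems under consideration, namely: (a) for any topological dynamical system $(X,T)$, the set $\underline{\mathscr{R}}=\{\mu\in\M(T)\mid \underline{R}(x,y)=0 \text{ for }(\mu\times\mu)\text{-a.e.}\,(x,y)\}$ is a $G_\delta$ subset of $\M(T)$ (item i) of Remark~\ref{MIR}, i.e.\ the $G_\delta$-part of item VII of Theorem~\ref{teocentral3} holds in full generality); and (b) for the Axiom A systems in question the set of $T$-periodic measures is dense in $\M(T)$ (item iii) of Remark~\ref{MIR}, citing \cite{Sigmund1970}), so that by item ii) of Remark~\ref{MIR} the set $\underline{\mathscr{R}}$ is also dense in $\M(T)$. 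Combining (a) and (b), $\underline{\mathscr{R}}$ is a dense $G_\delta$, hence residual, in $\M(T)$.

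The second ingredient is item I of Theorem~\ref{teocentral3} in the Axiom A setting: the set $\M_e$ of ergodic measures is residual in $\M(T)$. This is classical for systems with a dense set of periodic measures (Oxtoby \cite{Oxtoby1963}, Parthasarathy \cite{Parthasarathy1961}), and it applies here precisely because of fact (b); one should note explicitly that $\M(T)$ is a Polish space (the space of invariant measures of a continuous map on a compact metric space is compact metrizable), so that Baire category arguments apply. The key elementary observation is then that $\M_e$, being a $G_\delta$ subset of the Polish space $\M(T)$, is itself a Polish space in the subspace topology, hence a Baire space in its own right.

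Now I would argue as follows. Since $\underline{\mathscr{R}}$ is residual in $\M(T)$, it contains a dense $G_\delta$ subset $G\subseteq\M(T)$. The intersection $G\cap\M_e$ is then a $G_\delta$ subset of $\M_e$; moreover it is dense in $\M_e$, because $\M_e$ is residual in $\M(T)$ and the intersection of two residual sets in a Baire space is residual, hence dense, so $G\cap\M_e$ contains a set dense in $\M(T)$ and a fortiori dense in $\M_e\subseteq\overline{\M_e}=\M(T)$. Therefore $G\cap\M_e$ is a dense $G_\delta$ subset of the Baire space $\M_e$, i.e.\ it is residual in $\M_e$. Since $G\cap\M_e\subseteq\underline{\mathscr{R}}\cap\M_e=\{\mu\in\M_e\mid \underline{R}(x,y)=0 \text{ for }(\mu\times\mu)\text{-a.e.}\,(x,y)\in X\times X\}$, the latter set is residual in $\M_e$, which is the assertion of the theorem.

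The only genuinely delicate point is bookkeeping about which statements hold in the required generality: one must be careful that the $G_\delta$ property (fact (a)) really does not use any compactness or hyperbolicity beyond $(X,T)$ being a topological dynamical system in the sense of the footnote, and that the density (fact (b), via item ii)) really is available for the closed transitive pieces $X=f\!\restriction\!\Omega_i$ of the non-wandering set, for which \cite{Sigmund1970} supplies a dense set of periodic orbit measures. Granting those two inputs, the remainder is a soft category argument of the form ``residual $\cap$ residual is residual, and a $G_\delta$ subset of a Polish space is Baire,'' with no estimates required.
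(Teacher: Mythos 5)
Your proposal is correct and follows essentially the same route the paper intends for Theorem~\ref{AxiomA}: the $G_\delta$ property of $\underline{\mathscr{R}}$ from Proposition~\ref{Gdelta5} (valid for any topological dynamical system, as in item i) of Remark~\ref{MIR}), density via Sigmund's dense set of $T$-periodic measures (each periodic measure lies in $\underline{\mathscr{R}}$, as in the proof of item VII), and the passage to $\mathcal{M}_e$ through its being a dense $G_\delta$ (residual) subset of $\M(T)$. Your explicit bookkeeping of the relative category argument inside $\M_e$ is a welcome addition, but it is not a different method.
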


\begin{itemize}
\item[iv)] The hypothesis that the alphabet $M$ is perfect is crucial for items  IV and VI of Theorem \ref{teocentral3}. Namely, the fact that $M$ does not have isolated points is required to guarantee that one can always choose the periodic point $x$ of period $s$ in the statement of Lemma~\ref{denseperiodic} so that $x_i\neq x_j$ if $i\neq j$, $1\leq i,j\leq s$. This result, whose proof relies on the product structure of $X$, is required in the proof of Proposition \ref{central}, which in turn guarantees that the sets presented in items IV and VI of Theorem \ref{teocentral3} are dense. Indeed, our strategy depends on the fact that the set of ergodic measures with arbitrarily large entropy is dense in $\M_e$; here, we explicitly use the fact that the lower packing dimension of an ergodic measure is lower bounded by (up to a constant) its entropy (see Lemma~\ref{dimpos01}).    
\end{itemize}
 \end{rek}

\begin{rek} It is important to note that the results stated in items IV, VI and VIII in Theorem~\ref{teocentral3} are false if $M$ is a finite alphabet and $d$ is a hyperbolic metric in $X$ (which is compatible with the product topology); such metric exists since $(X,T)$ is expansive (see~\cite{Fathi1989} for the definition of hyperbolic metric and the proof of its existence when the system is expansive). Roughly, one can say that for such metric, there exist $\ve>0$ and $k>1$ such that, for each $x\in X$, each $n\in\N$ and each $0<r<\ve/k$,
  \begin{equation}\label{dball}
    B(x,n,r)\subset B(x,rk^{-n}),
  \end{equation}
where $B(x,n,r):=\{z\in X\mid\rho(T^iz,T^ix)<\ve,\;\;\forall\; 0\le i\le n-1\}$ stands for the $n$-th dynamical ball of radius $r$ centered at $x$. 

Namely, the following statements are true.  

  \begin{enumerate}
  \item One has that, by the dual argument to the one presented in the proof of Lemma~\ref{dimpos} and by~\eqref{dball}, $\overline{d}_\mu(x)\le h_\mu(T)/k$ for $\mu$-a.e $x\in X$, where $h_\mu(T)$ stands for the metric entropy of $\mu\in\M(T)$. Combining this with Proposition~\ref{BGT}, it follows that for each $\mu\in\M(T)$, $\dim_P^+(\mu)\le h_\mu(T)/k$.
\item It follows from~\eqref{dball}, Theorem~A and Proposition~A in~\cite{Varandas} that if $\mu\in\M_e$, then $\overline{R}(x)\le h_\mu(T)/k$ for $\mu$-a.e $x\in X$.
\item  It follows from~\eqref{dball} and adapted versions of Theorem~A and Proposition~A in~\cite{Varandas} that if $\mu\in\M_e$, then $\overline{R}(x,y)\le h_\mu(T)/k$ for $(\mu\times\mu)$-a.e $(x,y)\in X\times X$.
\end{enumerate}

Combining items 1, 2 and 3 with the fact that $\{\mu\in\M_e\mid h_\mu(T)=0\}$ is a generic subset of $\M(T)$ (see~\cite{Sigmund1974} for details), it follows that each of the sets $\{\mu\in\M(T)\mid\dim_P(\mu)=0\}$, $\{\mu\in\M(T)\mid R(x)=0,$ for $\mu\textrm{-a.e.}\;x\}$ and $\{\mu\in\M(T)\mid R(x,y)=0,$ for $(\mu\times\mu)\textrm{-a.e.}\;(x,y)\}$ is a generic subset of $\M(T)$, at least for a hyperbolic metric.

  So, in terms of the packing dimension, the upper recurrence rate and the upper quantitative waiting time indicator of a typical invariant measure, there is a striking difference between the full-shift defined over finite and perfect (uncountable) alphabets. Note that $(X,T)$ is not expansive if $M$ is a perfect alphabet, and that the metric defined by~\eqref{metric} is not hyperbolic (by Theorem~5.3 in~\cite{Fathi1989}, given that $\dim_{top}(X)=+\infty$ in this case).

  We emphasize again that in order to prove Theorem~\ref{teocentral3},  we use the fact that $M$ does not have isolated points. Since this is false if $M$ is a countable compact alphabet, and since $(X,T)$ is not expansive (by Hedlund-Reddy's Theorem; see~\cite{Downarowicz}), it is not clear in this case which are the values of the packing dimension, the upper recurrence rate and the upper quantitative waiting time indicator of a typical invariant measure.
\end{rek}

The paper is organized as follows. In Section~\ref{TVI-VII}, we present several results used in the proof of items III and IV of Theorem~\ref{teocentral3}. Section~\ref{TVIII-IX} is devoted to the proof of items V-VIII of Theorem~\ref{teocentral3}, and as in Section~\ref{TVI-VII}, we prove some auxiliary results which are the counterparts, for the returning rates and the first entrance rates, of the results stated in Section~\ref{TVI-VII}. In Appendix, we present the proof of Proposition~\ref{BGT}.

\section{Sets of ergodic measures with zero Hausdorff and infinity packing dimensions}
\label{TVI-VII}

\subsection{$HD$ and $PD$ are $G_\delta$ sets}

In what follows, $\M(X)$ denotes the set of Borel probability measures defined on the separable metric space $(X,d)$, endowed with the weak topology. Note that if $X$ is Polish (respectively, compact), then $\M(X)$ is also Polish (respectively, compact); see~\cite{Parthasarathy1961}.

We prove here that $\{\mu\in\M(X)\mid \dim_H(\mu)=0\}$ and, for each $\alpha>0$, $\{\mu\in\M(X)\mid \dim_P^-(\mu)\ge \alpha\}$ are both $G_\delta$ subsets of $\M(X)$; this implies that the sets $HD$ and $PD$ defined in the statement of Theorem~\ref{teocentral3} are also $G_\delta$ subsets of $\M(T)$.

\begin{lema}
  \label{asympt1}
  Let $(X,d)$ be a Polish metric space and let %topological dynamical system, where $X$ is endowed with the metric $d$, and let
  $\mu\in\M(X)$. % (we suppose that $\M(T)\neq\emptyset$).
  Then, for each $x\in X$, 
\[\underline{d}_{\mu}(x)=\liminf_{\ve\to 0}\frac{\log f_{x,\ve}(\mu)}{\log \ve},\qquad\overline{d}_{\mu}(x)=\limsup_{\ve\to 0}\frac{\log f_{x,\ve}(\mu)}{\log \ve},\]
where, for each $x\in X$ and each $\ve>0$, $$f_{x,\ve}(\,\cdot\,):\M(X)\rightarrow [0,1] \mbox{ ~is defined by the law~ } f_{x,\ve}(\mu):=\int f^{\ve}_x(y)d\mu(y),$$
 and  
 $f^{\ve}_x:X\rightarrow[0,1]$ is defined by the law %a continuous function such that
 $$
f^{\ve}_x(y):= \left\{ \begin{array}{lcc}
                    
            1 & ,if  & d(x,y) \leq \ve, \\
            \\ -\dfrac{d(x,y)}{\ve}+2&, if & \ve\le d(x,y)\le 2\ve,\\
            \\ 0 &   ,if  & d(x,y) \geq 2\ve.
          \end{array}
\right.$$
Furthermore, the function $f_{\ve}(\mu,x)=f_{x,\ve}(\mu):\M(X)\times X\rightarrow [0,1]$ is jointly continuous. % over $(\mu,x)\in \M_e\times X$.
\end{lema}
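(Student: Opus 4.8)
The plan is to bound the tent integral $f_{x,\ve}(\mu)$ above and below by measures of balls centered at $x$, and then pass to $\liminf$ and $\limsup$. The starting point is the pointwise estimate
\[\mathbf{1}_{\overline{B}(x,\ve)}(y)\;\le\;f^{\ve}_x(y)\;\le\;\mathbf{1}_{B(x,2\ve)}(y)\qquad(y\in X),\]
which is immediate from the definition of $f^{\ve}_x$, since $f^{\ve}_x\equiv 1$ on $\overline{B}(x,\ve)$ and $f^{\ve}_x\equiv 0$ off $B(x,2\ve)$. Integrating against $\mu$ gives $\mu(\overline{B}(x,\ve))\le f_{x,\ve}(\mu)\le\mu(B(x,2\ve))$ for every $\ve>0$. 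In particular, if $\mu(B(x,\ve_0))=0$ for some $\ve_0>0$, then $f_{x,\ve}(\mu)=0$ for all $\ve<\ve_0/2$, and both $\underline{d}_\mu(x)$ and $\overline{d}_\mu(x)$ equal $+\infty$ by definition, so the two identities hold trivially. Hence from now on I would assume $\mu(B(x,\ve))>0$ for all $\ve>0$; then $0<f_{x,\ve}(\mu)\le 1$ and all three quantities in the above sandwich lie in $(0,1]$.

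For $0<\ve<1/2$, taking logarithms and dividing by $\log\ve<0$ (which reverses the inequalities) yields
\[\frac{\log\mu(B(x,2\ve))}{\log\ve}\;\le\;\frac{\log f_{x,\ve}(\mu)}{\log\ve}\;\le\;\frac{\log\mu(\overline{B}(x,\ve))}{\log\ve}.\]
The key point is then the standard fact that, as the radius tends to $0$, the $\liminf$ and $\limsup$ of $\log\mu(B(x,\cdot))/\log(\cdot)$ are unchanged if one replaces an open ball by the closed ball of the same radius, or rescales the radius by the fixed factor $2$. For the rescaling I would factor $\dfrac{\log\mu(B(x,2\ve))}{\log\ve}=\dfrac{\log\mu(B(x,2\ve))}{\log(2\ve)}\cdot\dfrac{\log(2\ve)}{\log\ve}$, note that $\log(2\ve)/\log\ve\to 1$ and that $\log\mu(B(x,\eta))/\log\eta\ge 0$ (because $\mu$ is a probability measure), and use the elementary fact that $\liminf(a_n c_n)=\liminf a_n$ and $\limsup(a_n c_n)=\limsup a_n$ whenever $a_n\ge 0$ and $c_n\to 1$; the open-versus-closed comparison is handled the same way via $B(x,\ve)\subset\overline{B}(x,\ve)\subset B(x,2\ve)$. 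With these reductions, the left- and right-hand sides of the last display converge, in $\liminf$ (resp.\ $\limsup$) as $\ve\to 0$, to $\underline{d}_\mu(x)$ (resp.\ $\overline{d}_\mu(x)$), and a squeeze gives the two claimed formulas. I expect this step --- reconciling the degenerate case with the $+\infty$ convention and verifying that the factor-of-$2$ and the open/closed discrepancies wash out in the limit --- to be the only mildly delicate point; everything else is mechanical.

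For the joint continuity of $(\mu,x)\mapsto f_{x,\ve}(\mu)$ (with $\ve>0$ fixed), I would argue sequentially, which is legitimate because $\M(X)$ is metrizable. Given $(\mu_n,x_n)\to(\mu,x)$, split
\[\big|f_{x_n,\ve}(\mu_n)-f_{x,\ve}(\mu)\big|\;\le\;\Big|\int\big(f^{\ve}_{x_n}-f^{\ve}_x\big)\,d\mu_n\Big|\;+\;\Big|\int f^{\ve}_x\,d\mu_n-\int f^{\ve}_x\,d\mu\Big|.\]
The second term goes to $0$ because $f^{\ve}_x$ is bounded and continuous (indeed $\tfrac1\ve$-Lipschitz on $X$) and $\mu_n\to\mu$ weakly. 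For the first term, write $f^{\ve}_x(y)=g_\ve(d(x,y))$, where $g_\ve\colon[0,\infty)\to[0,1]$ is the ($\tfrac1\ve$-Lipschitz) one-variable profile read off from the definition of $f^{\ve}_x$; then by the triangle inequality $\|f^{\ve}_{x_n}-f^{\ve}_x\|_\infty\le\tfrac1\ve\,d(x_n,x)$, so, $\mu_n$ being a probability measure, the first term is at most $\tfrac1\ve\,d(x_n,x)\to 0$. Hence $f_{x_n,\ve}(\mu_n)\to f_{x,\ve}(\mu)$, which is the asserted joint continuity.
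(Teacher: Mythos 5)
Your proof is correct, and the first half is essentially the paper's argument: the same pointwise sandwich of the tent kernel between indicators of balls, followed by taking logarithms and absorbing the factor of $2$ (the paper states the resulting inequality $f_{x,\ve/2}(\mu)\le\mu(B(x,\ve))\le f_{x,2\ve}(\mu)$ and leaves the rescaling, the open/closed comparison and the degenerate case $\mu(B(x,\ve_0))=0$ largely implicit, whereas you spell them out; your handling matches the paper's convention that the local dimensions are $+\infty$ in the degenerate case). The only genuine divergence is in the joint-continuity half: the paper first proves that $f^{\ve}_{x_n}\to f^{\ve}_x$ uniformly and then assembles the conclusion through iterated/double-limit theorems (Theorems 2.13 and 2.15 of Habil, a Moore--Osgood type interchange of limits), while you bypass that machinery with the quantitative bound $\|f^{\ve}_{x_n}-f^{\ve}_x\|_\infty\le \tfrac{1}{\ve}\,d(x_n,x)$ (the kernel being $\tfrac1\ve$-Lipschitz in the center) and a one-line triangle-inequality split, with the second term controlled by weak convergence since $f^{\ve}_x$ is bounded and continuous. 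The ingredients are the same, but your assembly is shorter, self-contained, and avoids the external citation; both arguments are sequential, which is legitimate because $\M(X)\times X$ is metrizable, a point you (and the paper) rightly rely on.
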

\begin{proof}
It follows from the definition of $f^\ve_{x}$ that, for each $x\in X$ and each $\ve>0$, $f_{x,\ve/2}(\mu)\leq \mu(B(x,\ve))\leq f_{x,2\ve}(\mu).$ Then, if $\mu(B(x;\ve))>0$, one has $\frac{\log f_{x,\ve/2}(\mu)}{\log \ve}\geq \frac{\log \mu(B(x,\ve))}{\log \ve}\geq \frac{\log f_{x,2\ve}(\mu)}{\log \ve}$, which proves the first assertion. If $\mu(B(x;\ve))=0$, given that $f_{x,\ve/2}(\mu)\leq \mu(B(x,\ve))$, just set $\limsup_{\ve\to 0}(\inff)\frac{\log f_{x,\ve}(\mu)}{\log \ve}=+\infty$.

Note that, for each $x\in X$ and each $\ve>0$, $f^{\ve}_x: X\rightarrow \R$ is a continuous function such that, for each $y\in X$, $ \chi_{_{B(x,\ve/2)}}(y) \leq f_{x}^{\ve}(y)\leq \chi_{_{B(x,2\ve)}}(y)$. Given that $f_x^\ve(y)$ depends only on $d(x,y)$, it is straightforward to show that $f^{\ve}_{x_n}$ converges uniformly to $f^{\ve}_x$ on $X$ when $x_n\rightarrow x$.

We combine this remark with Theorems 2.13 and 2.15 in \cite{Habil} in order to prove that $f_\ve(\mu,x)$ is jointly continuous. Let $(\mu_m)$ and $(x_n)$ be sequences in $\M(X)$ and $X$, respectively, such that $\mu_m\to \mu$ and  $x_n\to x$. Firstly, we show that 
$$\lim_{m\to \infty}\lim_{n\to\infty} f_{\ve}(\mu_m,x_n)=\lim_{m\to \infty}\lim_{n\to\infty}\int f^{\ve}_{x_n}(y)d\mu_m(y)=f_{\ve}(\mu,x).$$
 
Since, for each $y\in X$, $|f^{\ve}_{x_n}(y)|\leq 1$, it follows from dominated convergence that, for each $m\in\mathbb{N}$, $\lim_{n\to\infty}\int f^{\ve}_{x_n}(y)d\mu_m(y)= \int f^{\ve}_{x}(y)d\mu_m(y)$. %($f^{\ve}_{x_n}(y)\to f^\ve_x(y)$ uniformly, and therefore, pointwise).
Now, since $f_x^\ve$ is continuous, it follows from the the definition of weak convergence that
\begin{equation*}
\lim_{m\to \infty}\lim_{n\to\infty}\int f^{\ve}_{x_n}(y)d\mu_m(y)= \lim_{m\to \infty}\int f^{\ve}_{x}(y)d\mu_m(y)=f_{\ve}(\mu,x).
\end{equation*}

The next step consists in showing that, for each $n\in \mathbb{N}$, the function $\f_n:\mathbb{N}\rightarrow \mathbb{R}$, defined by the law $\f_n(m):= f_{\ve}(\mu_m,x_n)$,  converges  uniformly in $m\in\N$ to $\f(m):=\lim_{n\to \infty}f_{\ve}(\mu_m,x_n)=\int f^{\ve}_{x}(y)d\mu_m(y)$. Let $\delta>0$. % and fix $m\in \N$.
Since $f^{\ve}_{x_n}(y)$ converges uniformly to $f^{\ve}_{x}(y)$, there exists $N\in \N$ such that, for each $n\ge N$ and each $y\in X$, $\left| f^{\ve}_{x_n}(y)- f^{\ve}_{x}(y)\right|<\delta$. Then one has, for each $n\geq N$ and each $m\in\N$, 
\begin{eqnarray*}
|\f_n(m)-\f(m)|=\left| \int f^{\ve}_{x_n}(y)d\mu_m(y)- \int f^{\ve}_{x}(y)d\mu_m(y) \right|
&\leq & \int \left| f^{\ve}_{x_n}(y)- f^{\ve}_{x}(y)\right| d\mu_m(y)\\
&<& \delta.
\end{eqnarray*}

It follows from Theorem  2.15 in \cite{Habil} that $\lim_{n,m \to \infty} f_{\ve}(\mu_m,x_n)= f_{\ve}(\mu,x)$. Given that $\lim_{n\to\infty} f_{\ve}(\mu_m,x_n)=\int f^{\ve}_{x}(y)d\mu_m(y)$ and that $\lim_{m\to\infty} f_{\ve}(\mu_m,x_n)=\int f^{\ve}_{x_n}(y)d\mu(y)$ exist for each $m\in \N$ and each $n\in\N$,  respectively, Theorem 2.13 in \cite{Habil} implies that 
$$\lim_{m\to \infty}\lim_{n\to\infty} f_{\ve}(\mu_m,x_n)=\lim_{n\to \infty}\lim_{m\to\infty} f_{\ve}(\mu_m,x_n)=\lim_{n,m \to \infty} f_{\ve}(\mu_m,x_n)=f_{\ve}(\mu,x).$$ 

Hence, if $(\mu_n,x_n)$ is some sequence in $\M(X)\times X$ (endowed with the product topology) such that $(\mu_n,x_n)\to (\mu,x)$, then $\lim_{n \to \infty} f_{\ve}(\mu_n,x_n)=f_{\ve}(\mu,x)$, showing that $f_{\ve}(\,\cdot\,,\,\cdot\,)=f_{\,\cdot\,,\ve}(\,\cdot\,)$ is jointly continuous at $(\mu,x)$.
\end{proof}

%%%%%%%%%%%%%%%%%%%%%%%%%%%%%%%%%%%%%%%%%%%%%%%%%%%%%%%

%%%%%%%%%%%%%%%%%%%%%%%%%%%%%%%%555

For each $t>0$, let $\ve=1/t$. Since, for each $x\in X$, 
\[\underline{d}_{\mu}(x)=\lim_{s\to \infty}\inf_{t\geq s} \frac{\log f_{x,1/t}(\mu)}{-\log t},\qquad \overline{d}_{\mu}(x)=\lim_{s\to \infty}\sup_{t\geq s} \frac{\log f_{x,1/t}(\mu)}{-\log t},\]
we set, for each $s\in\mathbb{N}$, 
\begin{eqnarray*}\label{beta}
\overline{\beta}_{\mu}(x,s)=\sup_{t> s} \frac{\log f_{x,1/t}(\mu)}{-\log t}~~\mbox{ and } ~~ \underline{\beta}_{\mu}(x,s)=\inf_{t> s} \frac{\log f_{x,1/t}(\mu)}{-\log t};
\end{eqnarray*}
note that, for each $x\in X$, $\N\ni s\mapsto\overline{\beta}_{\mu}(x,s)\in[0,+\infty]$ is non-increasing, whereas $\N\ni s\mapsto\underline{\beta}_{\mu}(x,s)\in[0,+\infty]$ is a non-decreasing function.

\begin{propo}
\label{Gdelta3} 
Let $(X,d)$ be a Polish metric space and let $\alpha>0$. Then, each of the sets  %\begin{eqnarray*}
  \[PD(\alpha)=\{ \mu\in \M(X) \mid\dim_P^-(\mu)\geq \alpha\},\]
  \[HD=\{ \mu\in \M(X) \mid\dim_H(\mu)=0\}\]
%\end{eqnarray*}
is a $G_{\delta}$ subset of $\M(X)$.
\end{propo}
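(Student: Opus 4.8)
The plan is to combine Proposition~\ref{BGT} (to turn the two dimensional conditions into $\mu$-almost-everywhere statements about the local dimensions) with Lemma~\ref{asympt1} (to re-express those statements through the jointly continuous functions $f_{x,\ve}(\mu)$), and then to reduce everything to a single structural fact: for jointly continuous $G\colon\M(X)\times X\to\R$ and $c\in\R$, the set $\{\mu:\mu(\{x:G(\mu,x)<0\})>c\}$ is open in $\M(X)$.

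First I would unfold the conditions. By Proposition~\ref{BGT}, $\dim_P^-(\mu)=\mu\textrm{-}\essinf\overline d_\mu$, so — as $\overline d_\mu\ge0$ — the membership $\mu\in PD(\alpha)$ is equivalent to $\overline d_\mu(x)\ge\alpha$ for $\mu$-a.e.\ $x$; likewise, since $\dim_H(\mu)=0$ forces $\dim_H^+(\mu)=0=\mu\textrm{-}\esssup\underline d_\mu$, membership in $HD$ is equivalent to $\underline d_\mu(x)=0$ for $\mu$-a.e.\ $x$. Using $\overline d_\mu(x)=\lim_s\overline\beta_\mu(x,s)$, $\underline d_\mu(x)=\lim_s\underline\beta_\mu(x,s)$ and the fact that $t\mapsto f_{x,1/t}(\mu)=\int f^{1/t}_x\,d\mu$ is continuous (hence the suprema/infima defining $\overline\beta_\mu,\underline\beta_\mu$ may be taken over $t\in\mathbb Q$), and rewriting $\frac{\log f_{x,1/t}(\mu)}{-\log t}>c$ as $f_{x,1/t}(\mu)<t^{-c}$, I would obtain identities such as
\[
\{x:\overline d_\mu(x)\ge\alpha\}=\bigcap_{s,m}\ \bigcup_{\substack{t\in\mathbb Q\\ t>s}}\{x:f_{x,1/t}(\mu)<t^{-(\alpha-1/m)}\}
\]
(and its analogue, with reversed inequalities and one extra intersection, for $\{x:\underline d_\mu(x)=0\}$). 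Each set $\{x:f_{x,1/t}(\mu)<c\}$ is open in $X$ since $x\mapsto f_{x,1/t}(\mu)$ is continuous. Since $\mu(\bigcap_iA_i)=1$ iff $\mu(A_i)=1$ for all $i$ (countable family), and $\mu(\bigcup_iA_i)=\lim_N\mu(\bigcup_{i\le N}A_i)$, this leads to
\[
PD(\alpha)=\bigcap_{s,m,n}\ \bigcup_{N\ge1}\Bigl\{\mu:\mu\bigl(\{x:G_{N,s,m}(\mu,x)<0\}\bigr)>1-\tfrac1n\Bigr\},
\]
where, with $(t_i)$ an enumeration of $\mathbb Q\cap(s,\infty)$, $G_{N,s,m}(\mu,x):=\min_{1\le i\le N}\bigl(f_{x,1/t_i}(\mu)-t_i^{-(\alpha-1/m)}\bigr)$ is jointly continuous by Lemma~\ref{asympt1} (a finite minimum of jointly continuous functions). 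The set $HD$ gets a parallel description, with ``$<0$'' replaced by ``$>0$'' and ``$\min$'' by ``$\max$'' — equivalently, by applying the structural fact to $-G$.

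To prove the structural fact I would introduce the continuous cutoffs $\psi_n(u):=\max(0,\min(1,-nu))$, which satisfy $\psi_n(G(\mu,\cdot))\uparrow\chi_{\{G(\mu,\cdot)<0\}}$ pointwise for every $\mu$; by monotone convergence, $\mu(\{x:G(\mu,x)<0\})=\sup_n\int_X\psi_n(G(\mu,x))\,d\mu(x)$, so the set $\{\mu:\mu(\{x:G(\mu,x)<0\})>c\}$ equals $\bigcup_n\{\mu:\int_X\psi_n(G(\mu,x))\,d\mu(x)>c\}$. Thus it is enough to show that $\mu\mapsto\int_X h(\mu,x)\,d\mu(x)$ is continuous for every bounded jointly continuous $h$ (take $h=\psi_n\circ G$). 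For $\mu_k\to\mu_0$, I would split $\int h(\mu_k,\cdot)\,d\mu_k-\int h(\mu_0,\cdot)\,d\mu_0$ into $\int\bigl(h(\mu_k,\cdot)-h(\mu_0,\cdot)\bigr)\,d\mu_k$ and $\int h(\mu_0,\cdot)\,d(\mu_k-\mu_0)$; the latter vanishes in the limit since $h(\mu_0,\cdot)\in C_b(X)$, and for the former I would invoke Prokhorov's theorem to get a compact $K$ with $\sup_k\mu_k(X\setminus K)<\delta$, together with a finite-cover argument from joint continuity to get $\sup_{x\in K}|h(\mu_k,x)-h(\mu_0,x)|\to0$, bounding the former term eventually by $\delta(1+2\|h\|_\infty)$; since $\delta>0$ is arbitrary it vanishes as well. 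This would exhibit $PD(\alpha)$ and $HD$ as countable intersections of open sets, hence $G_\delta$, which — as noted just before the statement — yields the $G_\delta$ assertions of Theorem~\ref{teocentral3} in $\M(T)$.

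The one real obstacle is the structural fact: the open set $\{x:G(\mu,x)<0\}$ whose mass we must keep above $c$ moves with $\mu$, so the portmanteau theorem (which only gives lower semicontinuity of $\mu\mapsto\mu(U)$ for a \emph{fixed} open $U$) does not apply directly; approximating its indicator from below by the honestly jointly continuous quantities $\psi_n\circ G$ and using uniform tightness to absorb the non-compactness of $X$ is what resolves it. An alternative route, avoiding Prokhorov, would be: use inner regularity of $\mu_0$ to select a compact $F\subset\{G(\mu_0,\cdot)<0\}$ with $\mu_0(F)>c$, the tube lemma to keep $G(\mu,\cdot)$ uniformly negative on $F$ for $\mu$ near $\mu_0$, and the elementary uniform Lipschitz bound $|f_{x,1/t}(\mu)-f_{x',1/t}(\mu)|\le t\,d(x,x')$ (read off from the formula for $f^\ve_x$) to fatten $F$ to an open neighbourhood still contained in $\{G(\mu,\cdot)<0\}$; the portmanteau theorem on that open neighbourhood then finishes the job.
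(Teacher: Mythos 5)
Your proof is correct, but its main line is organized differently from the paper's. The paper proves that the complement is $F_\sigma$: it works directly with the lower semicontinuous maps $(\mu,x)\mapsto\overline{\beta}_\mu(x,s)$, introduces the closed sub-level sets $Z_\mu(s,k)=\{x\mid\overline{\beta}_\mu(x,s)\le\alpha-1/k\}$ and the sets $\M_{s,k}(l)=\{\nu\mid\nu(Z_\nu(s,k))\ge 1/l\}$, and shows the latter are closed by a sequential argument: inner regularity (tightness) of the limit measure yields a compact $C$ inside the open good set, a finite-subcover (tube-lemma) argument inside the open set $W_{s,k}\subset\M(X)\times X$ propagates the strict inequality to nearby measures on an open $\mathcal{O}\supset C$, and the portmanteau inequality for the fixed set $\mathcal{O}$ produces the contradiction. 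You instead exhibit $PD(\alpha)$ and $HD$ directly as countable intersections of open sets: continuity from below of the measure lets you truncate the supremum over $t$ to finite minima/maxima of the jointly continuous functions $f_{x,1/t_i}(\mu)$, and everything then rests on your structural fact that $\mu\mapsto\mu(\{x\mid G(\mu,x)<0\})$ is lower semicontinuous for jointly continuous $G$, proved via monotone continuous cutoffs together with continuity of $\mu\mapsto\int h(\mu,x)\,d\mu(x)$ for bounded jointly continuous $h$ (Prokhorov uniform tightness of a convergent sequence plus uniform continuity on compacts, legitimate since $\M(X)$ is metrizable). Both arguments face the same difficulty, namely that the set whose mass is being controlled moves with $\mu$, and both use the same ingredients (Proposition~\ref{BGT}, the joint continuity from Lemma~\ref{asympt1}, tightness); yours buys a modular, reusable openness lemma and avoids semicontinuity bookkeeping by working only with strict sub-level sets of honestly continuous functions, while the paper's argument needs only inner regularity of the single limit measure rather than uniform tightness of the whole sequence. (Your sketched ``alternative route'' at the end --- inner regularity, tube lemma, portmanteau on a fattened compact --- is in essence the paper's proof.) Two small points you should make explicit: restrict to $s\ge1$ so that $\log t>0$ in the algebra converting $\frac{\log f_{x,1/t}(\mu)}{-\log t}>c$ into $f_{x,1/t}(\mu)<t^{-c}$, and justify passing to rational $t$ in the sup/inf: $t\mapsto f_{x,1/t}(\mu)$ is continuous (dominated convergence) and non-increasing, so the region where it vanishes --- where the ratio is $+\infty$ by convention --- is a tail in $t$, and rationals indeed suffice.
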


\begin{proof}
  Since the arguments in both proofs are similar, we just prove the statement for $PD(\alpha)$. We show that $\M(X)\setminus PD(\alpha)$ is an $F_{\sigma}$ set.

  {\textit{Claim 1.}}  $PD(\alpha)=\bigcap_{s\in\N}\{\mu\in \M(X) \mid \mu\textrm{-}\essinf\overline{\beta}_\mu(x,s)\ge\alpha\}$.

Let $\mu\in PD(\alpha)$.  Since, for each $x\in X$,  $\N\ni s\mapsto\overline{\beta}_\mu(x,s)\in[0,\infty]$ is a non-increasing function, it follows that, for each $s\in\N$, $\mu\textrm{-}\essinf\overline{\beta}_\mu(x,s)\geq \alpha$.

Now, let $\mu\in\bigcap_{s\in\N}\{\nu\in \M(X) \mid \nu\textrm{-}\essinf\overline{\beta}_\nu(x,s)\ge\alpha\}$. Then, for each $s\in\N$, there exists a measurable $A_s\subset X$ with $\mu(A_s)=1$, such that for each $x\in A_s$, $\overline{\beta}_\mu(x,s)\ge \alpha$. Let $A:=\bigcap_{s\ge 1}A_s$; then, for each $x\in A$, one has $\overline{d}_\mu(x)=\lim_{s\to\infty}\overline{\beta}_\mu(x,s)\geq\alpha$; since $\mu(A)=1$, the result follows from Proposition~\ref{BGT}.

Let $\mu \in \M(X)$, let $k,s\in \N$, set $Z_{\mu}(s,k)=\{x\in X\mid \overline{\beta}_{\mu}(x,s)\leq\alpha-1/k\}$ and set, for each $l\in\N$,
\begin{eqnarray*}
\M_{s,k}(l)=\{\nu\in\M(X)\mid \nu(Z_\nu(s,k))\ge 1/l\}.
\end{eqnarray*}

\textit{Claim 2.} $Z_{\mu}(s,k)$ is  closed.

Let $(z_n)$ be a sequence in $Z_{\mu}(s,k)$ such that $z_n\to z$, and let $t>0$. 
Since for each fixed $\mu\in\M(X)$, the mapping $X\ni x\mapsto f_{x,1/t}(\mu)\in(0,1]$ is continuous (see the proof of Lemma~\ref{asympt1}), the mapping $X\ni x\longmapsto  \overline{\beta}_{\mu}(x,s)\in[0,+\infty)$ is  lower semi-continuous, which implies that $z\in Z_{\mu}(s,k)$. %Now, if $z_n\in Z_{\mu}(s,k)\cap(\supp(\mu))^c$, then $\lim_{n\to\infty}\overline{\beta}_{\mu}(z_n,s)=+\infty=\overline{\beta}_{\mu}(z,s)$

\textit{Claim 3.} $W_{s,k}=\{(\mu,x)\in \M(X)\times X\mid \overline{\beta}_{\mu}(x,s)>\alpha-1/k\}$ is open.

This is a consequence of the fact that, by Lemma~\ref{asympt1}, the mapping $\M(X)\times X\ni(\mu,x)\longmapsto \overline{\beta}_{\mu}(x,s)$
is lower semi-continuous. % ($\M(X)\times X$ is endowed with the induced topology). 

\

Now, we show that $\M_{s,k}(l)$ is closed. Let $(\mu_n)$ be a sequence in $\M_{s,k}(l)$ such that $\mu_n\to \mu$. Suppose, by absurd, that $\mu\notin \M_{s,k}(l)$; we will find that $\mu_n\notin \M_{s,k}(l)$ for~$n$ sufficiently large, a contradiction.

If $\mu\notin \M_{s,k}(l)$, then $\mu(A)>1-1/l$ where, $A=X\setminus Z_{\mu}(s,k)$.  Given that $\mu$ is tight ($\mu$ is a probability Borel measure and the space $X$ is Polish), there exists a compact set $C\subset A$ such that $\mu(C)>1-1/l$.

The idea is to construct a suitable subset of $W_{s,k}$ that contains a neighborhood of $\{\mu\}\times C$. Let, for each $x\in C$, $V_x\subset W_{s,k}$ be an open neighborhood of~$(\mu,x)$ (such open set exists, by Claim 3; that is, $V_x:=B((\mu,x);\varepsilon)=\{(\nu,y)\in\M(X)\times X\mid\max\{\rho(\nu,\mu),d(x,y)\}<\varepsilon\}$, for some suitable $\varepsilon>0$ (where $\rho$ is any metric defined in $\M(X)$ which is compatible with the weak topology); then, $\{V_x\}_{x\in C}$ is an open cover of $\{\mu\}\times C$, and since $\{\mu\}\times C$ is a compact subset of $\M(X)\times X$, it follows that one can extract from $\{V_x\}_{x\in C}$ a finite sub-cover, $\{V_{x_i}\}_{i=1}^n$.

We affirm that there exists an $\ell\in\mathbb{N}$ (which depends on $C$) such that $\{\mu_n\}_{n\ge\ell}\subset\bigcap_{i}(\pi_1(V_{x_i}))$. Namely, for each $i$, there exists an $\ell_i$ such that $\{\mu_n\}_{n\ge\ell_i}\subset\pi_1(V_{x_i})$; set $\ell:=\max\{\ell_i\mid i\in\{1,\ldots,n\}\}$, and note that for each $i$, $\{\mu_n\}_{n\ge\ell}\subset\pi_1(V_{x_i})$. Set also $\mathcal{I}:=\bigcap_{i}(\pi_1(V_{x_i}))$ and $\mathcal{O}:=\bigcup_{i}(\pi_2(V_{x_i}))$.

Since for each $i$, $V_{x_i}=\pi_1(V_{x_i})\times\pi_2(V_{x_i})$,  and given that
\begin{eqnarray*}
\{\mu_n\}_{n\ge\ell}\times\mathcal{O}&\subset& \mathcal{I}\times\mathcal{O}= %\bigcap_{i}(\pi_1(V_{x_i}))\times \bigcup_{j}(\pi_2(V_{x_j}))\\
\bigcup_{j} \left( \left[\bigcap_{i}\pi_1(V_{x_i}) \right] \times \pi_2(V_{x_j})\right)
%&=&  \bigcup_{j} \left( \bigcap_{i} \big(\pi_1(V_{x_i}) \times \pi_2(V_{x_j})\big)\right)\\
\subset  \bigcup_{j}(\pi_1(V_{x_j})\times\pi_2(V_{x_j}))\\
&=&\bigcup_{j}V_{x_j}\subset W_{s,k},
\end{eqnarray*}
it follows that, for each $n\ge\ell$ and each $y\in\mathcal{O}$, $\overline{\beta}_{\mu_n}(y,s)>\alpha-1/k$. Moreover, $\mathcal{O}$ is an open set that contains $C$.

On the other hand, weak convergence implies that 
\[
\displaystyle\limsup_{n\to \infty}\mu_n(X\setminus\mathcal{O})\leq\mu(X\setminus\mathcal O)\leq  \mu(X\setminus C)<\frac{1}{l},
\] from which follows that there exists an $\tilde{\ell}\ge\ell$ such that, for $n\ge\tilde{\ell}$, $\mu_n(X\setminus \mathcal{O})<1/l$.

Combining the last results, one concludes that for  $n\ge\tilde{\ell}$, $\mu_n(X\setminus \mathcal{O})<1/l$, and for each $y\in \mathcal{O}$, $\overline{\beta}_{\mu_n}(y,s)>\alpha-1/k$, so
\[\mu_n(Z_{\mu_n}(s,k))\le\mu_n(X\setminus\mathcal{O})<\frac{1}{l};\]
this contradicts the fact that, for each $n\in\mathbb{N}$, $\mu_n\in \mathcal{M}_{s,k}(l)$. Hence, $\mu \in  \mathcal{M}_{s,k}(l)$, and $\mathcal{M}_{s,k}(l)$ is a closed subset of $\mathcal{M}(X)$. 

Finally, it follows  $\M(X)\setminus PD(\alpha)=\bigcup_{s\in \N}\bigcup_{k\in \N}\bigcup_{l\in \N}\M_{s,k}(l)$ is an $F_{\sigma}$ subset of $\M(X)$, concluding the proof of the lemma.
\end{proof}

\subsection{$HD$ and $PD$ are dense sets}
\label{Subdense}

Here we prove that the sets $HD$ and $PD$ defined in the statement of Theorem~\ref{teocentral3} are dense in $\M(T)$. In order to prove that $PD$ is dense, we will use the fact that $T$ is a bi-Lipschitz function.

So, from now on, we assume that $(X,d)$ is a Polish metric space and that $T:X\rightarrow X$ is a Lipschitz function, with Lipschitz constant $\Lambda>1$. Assume also that $T^{-1}:X\rightarrow X$ exists as a Lipschitz function, with Lipschitz constant $\Lambda^\prime>1$.

\begin{propo}
\label{uniforminvariance}
Let $\mu\in \M(T)$.  Then, for each $x\in X$, $\overline{d}_{\mu}(x)=\overline{d}_{\mu}(Tx)$, $\underline{d}_{\mu}(x)=\underline{d}_{\mu}(Tx)$. 
\end{propo}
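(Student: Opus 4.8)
The plan is to show that the local dimensions are invariant under the bi-Lipschitz map $T$ by a direct comparison of the $\mu$-measures of balls centered at $x$ and at $Tx$. Since $\mu$ is $T$-invariant, $\mu(B(Tx,r)) = \mu(T^{-1}B(Tx,r))$, so the key is to relate $T^{-1}B(Tx,r)$ with balls centered at $x$. Using the Lipschitz constant $\Lambda'$ of $T^{-1}$, if $d(Tx,Ty)<r$ then $d(x,y) = d(T^{-1}Tx, T^{-1}Ty) \le \Lambda' d(Tx,Ty) < \Lambda' r$, which gives $T^{-1}B(Tx,r) \subset B(x,\Lambda' r)$. Dually, using the Lipschitz constant $\Lambda$ of $T$, if $d(x,y)<r/\Lambda$ then $d(Tx,Ty) \le \Lambda d(x,y) < r$, so $B(x,r/\Lambda) \subset T^{-1}B(Tx,r)$. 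Combining these two inclusions yields, for every $r>0$,
\[
\mu(B(x,r/\Lambda)) \;\le\; \mu(T^{-1}B(Tx,r)) \;=\; \mu(B(Tx,r)) \;\le\; \mu(B(x,\Lambda' r)).
\]

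From this chain I would take logarithms and divide by $\log r$ (noting $\log r < 0$ for small $r$, which reverses inequalities consistently). Writing $\frac{\log\mu(B(x,\Lambda' r))}{\log r} = \frac{\log\mu(B(x,\Lambda' r))}{\log(\Lambda' r)}\cdot\frac{\log(\Lambda' r)}{\log r}$ and observing that $\frac{\log(\Lambda' r)}{\log r} \to 1$ as $r\to 0$ (and similarly for the $r/\Lambda$ term), one sees that passing to $\liminf$ and $\limsup$ as $r\to 0$ the multiplicative constants $\Lambda$ and $\Lambda'$ disappear. Hence $\overline{d}_\mu(Tx) = \overline{d}_\mu(x)$ and $\underline{d}_\mu(Tx) = \underline{d}_\mu(x)$. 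One should also handle the degenerate case: if $\mu(B(x,\ve))=0$ for some $\ve>0$, then the inclusions above force $\mu(B(Tx,\ve/\Lambda'))=0$ as well (and conversely), so both local dimensions are $+\infty$ at $x$ iff they are at $Tx$, consistently with the convention $\overline{\underline{d}}_\mu := +\infty$ in that case.

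The argument is essentially routine; there is no real obstacle, only the bookkeeping of which Lipschitz constant controls which inclusion and the care needed because $\log r$ is negative near $0$. The one point worth stating cleanly is that $T$-invariance is used precisely once, in the equality $\mu(T^{-1}B(Tx,r)) = \mu(B(Tx,r))$, and that it is the existence of a Lipschitz inverse (not merely continuity of $T^{-1}$) that makes the two-sided sandwich possible. I would also remark that, by induction, the same identities hold with $T$ replaced by any iterate $T^n$, $n\in\Z$, since powers of a bi-Lipschitz map are bi-Lipschitz; this is the form actually needed in the sequel.
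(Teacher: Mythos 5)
Your proof is correct, and it takes a more direct route than the paper's. The paper does not work with the raw ball measures $\mu(B(\cdot,r))$: it phrases everything in terms of the smoothed functionals $f_{x,\ve}(\mu)=\int f^\ve_x\,d\mu$ and the quantities $\overline{\beta}_\mu(x,s)$, derives the key comparison $f_{x,\ve/\Lambda}(\mu)\le f_{Tx,\ve}(\mu)$ by applying Birkhoff's ergodic theorem to the functions $f^\ve_x$ (using that the Birkhoff averages of $f^{\ve/\Lambda}_x$ and $f^{\ve}_{Tx}$ are pointwise comparable and integrate to $f_{x,\ve/\Lambda}(\mu)$ and $f_{Tx,\ve}(\mu)$), and then cancels the Lipschitz constants through explicit correction factors $A_\Lambda(s)=\frac{\log s+\log\Lambda}{\log s}\to1$ as $s\to\infty$, with a separate case $x\notin\supp(\mu)$ handled via $T$-invariance of the support. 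Your argument replaces all of this by the two set inclusions $B(x,r/\Lambda)\subset T^{-1}B(Tx,r)\subset B(x,\Lambda' r)$ together with the single use of invariance $\mu(T^{-1}B(Tx,r))=\mu(B(Tx,r))$, and the constants wash out in the $\log$-ratio limit exactly as you say; your treatment of the degenerate case $\mu(B(x,\ve))=0$ via the same sandwich is also sound and matches the paper's convention $\overline{\underline{d}}_\mu=+\infty$. What the paper's formulation buys is that the intermediate inequalities $\overline{\beta}_\mu(Tx,s)\le A_\Lambda(s)\,\overline{\beta}_\mu(x,\Lambda s)$ are stated for the very functionals $f_{x,1/t}(\mu)$ and $\overline{\beta}_\mu$ that reappear in the $G_\delta$ arguments (Proposition~\ref{Gdelta3}); what yours buys is brevity and generality, since it needs no ergodic theorem at all — only invariance of $\mu$ and bi-Lipschitz continuity of $T$ — and indeed the paper's invocation of Birkhoff is dispensable even in its own setting, as $\int f^\ve_{Tx}\circ T\,d\mu=\int f^\ve_{Tx}\,d\mu$ follows directly from invariance. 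Your closing remark that the same identities hold for all iterates $T^n$, $n\in\Z$, is correct and consistent with how the result is used later.
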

\begin{proof}
  %Let $z\in X$.
 It follows from  Birkhoff's Ergodic Theorem that, for each $z\in X$ and each $\ve>0,$ the limit 
\begin{eqnarray}\label{IB}
\tilde{\varphi}_{_{B(z,\ve)}}(y)=\lim_{n\to\infty}\frac{1}{n} \sum_{i=0}^{n-1}f^\ve_z(T^i(y))
\end{eqnarray}
exists for $\mu$-a.e.$\;y\in X$, and 
\begin{eqnarray*}
\int \tilde{\varphi}_{_{B(z,\ve)}}(y) d\mu(y)=\int  f^\ve_z(y)d\mu(y)=f_{z,\ve}(\mu).
\end{eqnarray*}

Fix $x\in \supp(\mu)$. It is straightforward to show that, for each $y\in X$ and each $i\in\N\cup\{0\}$, one has $f_x^{\ve/\Lambda}(T^i(y))\le f_{Tx}^{\ve}(T^{i+1}(y))$. Letting $z=x$ and $z=Tx$ in~\eqref{IB}, respectively, one gets $\tilde{\varphi}_{_{B(x,\ve/\Lambda)}}(y)\leq \tilde{\varphi}_{_{B(Tx,\ve)}}(y)$ for $\mu$-a.e.$\;y\in X$, from which follows that $f_{x,\ve/\Lambda}(\mu)\leq f_{Tx,\ve}(\mu)$.

{\textit{Case 1: $x\in\supp(\mu)$}}. Note that, for each $\eta>0$, $f_{x,\eta}(\mu)>0$. Let $\ve=1/t$, $t=l/\Lambda$  and $s\geq 1+1/\Lambda$; then,
\begin{eqnarray*}
  \sup_{t\geq s}\frac{\log f_{Tx,1/t}(\mu)}{-\log t}&\leq& \sup_{l\geq \Lambda s}\frac{\log l}{\log l - \log \Lambda}\frac{\log f_{x,1/l}(\mu)}{-\log l}\leq \frac{\log (\Lambda s)}{\log (\Lambda s) - \log\Lambda} \sup_{l\geq \Lambda s}\frac{\log f_{x,1/l}(\mu)}{-\log l}\\
  &=& A_{\Lambda}(s) \sup_{l\geq \Lambda s}\frac{\log f_{x,1/l}(\mu)}{-\log l},
\end{eqnarray*}
where $A_{\Lambda}(s):=\frac{\log s + \log\Lambda}{\log s}$ (since $s\geq 1+1/\Lambda$, one has $l\geq \Lambda+1$).

Using the same idea, one can prove that %Now, working with $T^{-1}$, we get for $z\in X$ that
$f_{z,\ve/\Lambda^\prime}(\mu)\leq f_{T^{-1}z,\ve}(\mu)$; letting $z=Tx$, one gets $f_{Tx,\ve/\Lambda^\prime}(\mu)\leq f_{x,\ve}(\mu)$. Thus, the previous discussion leads to 
\begin{eqnarray*}
\label{invariance}
\overline{\beta}_{\mu}(Tx,s)\leq A_{\Lambda}(s) \overline{\beta}_{\mu}(x,\Lambda s) ~\mbox{ and }  ~  \overline{\beta}_{\mu}(x,s)\leq A_{\Lambda^\prime}(s)\overline{\beta}_{\mu}(Tx,\Lambda^\prime s);
\end{eqnarray*}
one can combine these inequalities and obtain, for each $x\in X$ and each $s\ge\max\{1+1/\Lambda,1+1/\Lambda^\prime\}$,
\begin{eqnarray*}
\overline{\beta}_{\mu}(Tx,s)\leq A_{\Lambda}(s) \overline{\beta}_{\mu}(x,\Lambda s)\leq A_{\Lambda}(s)A_{\Lambda^\prime}(\Lambda s) \overline{\beta}_{\mu}(Tx,\Lambda\cdot\Lambda^\prime s).
\end{eqnarray*}

Now, taking the limit $s\to\infty$ in the inequalities above and observing that $A_{\Lambda}(s)$ and $A_{\Lambda^\prime}(s)$ are decreasing functions such that $\lim_{s\to\infty}A_{\Lambda(\Lambda^\prime)}(s)=1$, one gets $\overline{d}_{\mu}(Tx) =\overline{d}_{\mu}(x)$.

{\textit{Case 2: $x\notin\supp(\mu)$.}} It follows from the $T$-invariance of $\supp(\mu)$ that $T(x)\notin\supp(\mu)$; thus, $\overline{d}_\mu(Tx)=+\infty=\overline{d}_\mu(x)$.

The proof that, for each $x\in X$ $\underline{d}_{\mu}(Tx) =\underline{d}_{\mu}(x)$, is analogous; therefore, we omit it.
\end{proof}

\begin{rek}
\label{forergodic}
If $\mu\in \M_e$, since $T$ and $T^{-1}$ are Lipschitz functions,  it follows from Proposition~\ref{uniforminvariance} that $\overline{d}_{\mu}(x)$ and $\underline{d}_{\mu}(x)$ are constants $\mu$-a.e. (an analogous result can be found in \cite{Cutler1993}, Theorem 4.1.10 chapter 1). %Thus, for ergodic measures, the argument above can be simplified. 
\end{rek}

\begin{lema}
\label{dimpos}
Let $X$ be a compact metric space. If $\mu\in\M(T)$, then $\dim_P^+(\mu)\geq \frac{h_{\mu}(T)}{\log \Lambda}$. 
\end{lema}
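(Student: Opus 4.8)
The plan is to bound the upper local dimension $\overline{d}_\mu(x)$ from below, $\mu$-almost everywhere, by $h_\mu(T)/\log\Lambda$, and then to invoke Proposition~\ref{BGT}, which identifies $\dim_P^+(\mu)$ with the $\mu$-essential supremum of $\overline{d}_\mu$. Since $X$ is compact and $T$ is $\Lambda$-Lipschitz, the key geometric observation is that a metric ball of radius $r$ centered at $x$ is contained in the $n$-th dynamical (Bowen) ball of radius $r$ centered at $x$ for all $n$: if $d(z,x)<r$ then $d(T^iz,T^ix)\le\Lambda^i d(z,x)<\Lambda^i r$, so $B(x,r)\subset B_n(x,\Lambda^n r)$ in the usual notation, where $B_n(x,\delta)=\{z:\ d(T^iz,T^ix)<\delta,\ 0\le i\le n-1\}$. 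Equivalently, for a fixed small $\delta$, taking $r=\delta\Lambda^{-n}$ gives $B(x,\delta\Lambda^{-n})\subset B_n(x,\delta)$, hence $\mu(B(x,\delta\Lambda^{-n}))\le\mu(B_n(x,\delta))$.

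The second ingredient is the Brin--Katok local entropy formula: for $\mu\in\M(T)$ (applied to its ergodic components, or directly in the form valid for invariant measures), for $\mu$-a.e.\ $x$ one has
\[
\lim_{\delta\to 0}\limsup_{n\to\infty}\frac{-\log\mu(B_n(x,\delta))}{n}=h_\mu(T,x),
\]
where $h_\mu(T,x)$ is the local entropy function with $\int h_\mu(T,x)\,d\mu(x)=h_\mu(T)$; in particular the set where $h_\mu(T,x)\ge h_\mu(T)$ (after passing to ergodic components, where $h_\mu(T,x)=h_\mu(T)$ a.e.) has positive measure, and it suffices to work on that set. Combining the two ingredients: along the sequence $r_n=\delta\Lambda^{-n}$ one has $-\log r_n=n\log\Lambda-\log\delta$, so
\[
\frac{\log\mu(B(x,r_n))}{\log r_n}=\frac{-\log\mu(B(x,r_n))}{n\log\Lambda-\log\delta}\ge\frac{-\log\mu(B_n(x,\delta))}{n\log\Lambda-\log\delta},
\]
and taking $\limsup_{n\to\infty}$ of the right-hand side yields, after letting $\delta\to0$, a lower bound of $h_\mu(T,x)/\log\Lambda$. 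Since $\overline{d}_\mu(x)=\limsup_{r\to 0}\log\mu(B(x,r))/\log r$ is at least the $\limsup$ along any particular sequence $r_n\to 0$, we get $\overline{d}_\mu(x)\ge h_\mu(T,x)/\log\Lambda$ for $\mu$-a.e.\ $x$.

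Finally, by Proposition~\ref{BGT}, $\dim_P^+(\mu)=\mu\textrm{-}\esssup\,\overline{d}_\mu(x)\ge\mu\textrm{-}\esssup\,\bigl(h_\mu(T,x)/\log\Lambda\bigr)\ge h_\mu(T)/\log\Lambda$, where the last inequality holds because the essential supremum of a nonnegative function is at least its integral, and $\int h_\mu(T,x)\,d\mu=h_\mu(T)$ (alternatively, decompose $\mu$ into ergodic components $\mu=\int\mu_\omega\,d\tau(\omega)$; on a positive-$\tau$-measure set of $\omega$ one has $h_{\mu_\omega}(T)\ge h_\mu(T)$ by convexity of entropy, which is enough). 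I expect the main technical point to be the careful handling of the order of limits in the Brin--Katok formula — the $\limsup_{n\to\infty}$ and the $\delta\to 0$ limit — and making sure the inclusion of balls is used with the correct radius bookkeeping so that the $\log\delta$ term becomes negligible as $n\to\infty$; the reduction from invariant to ergodic measures (so that $h_\mu(T,x)$ can be replaced by the constant $h_\mu(T)$) is routine but should be stated explicitly.
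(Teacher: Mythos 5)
Your proposal is correct and follows essentially the same route as the paper: the inclusion $B(x,\delta\Lambda^{-n})\subset B(x,n,\delta)$, the Brin--Katok local entropy formula (where compactness of $X$ enters), and Proposition~\ref{BGT} via the fact that $\overline{d}_\mu(x)\ge h_\mu(T,x)/\log\Lambda$ $\mu$-a.e. Your concluding step (essential supremum dominates the integral of $h_\mu(T,\cdot)$) is just a rephrasing of the paper's observation that $\overline{d}_\mu(x)\ge h_\mu(T)/\log\Lambda$ on a set of positive measure.
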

\begin{proof}
Fix $x\in X$,  $n\geq1$ and $\ve>0$. Given $y\in B(x,\ve \Lambda^{-n})$, one has, for each $0\le i\le n$, $\rho(T^iy,T^ix)\le\Lambda^i\rho(x,y)\le\Lambda^{i-n}\ve<\ve$, which shows that $y\in B(x,n,\ve)=\{z\in X\mid\rho(T^iz,T^ix)<\ve,\;\;\forall\; 0\le i\le n-1\}$. Hence, for each $x\in X$ and each $\ve>0$,
\begin{eqnarray*}
\overline{d}_{\mu}(x)  \ge   \limsup_{n\to \infty} \frac{\log\mu(B(x,\ve \Lambda^{-n}))}{\log \ve \Lambda^{-n}}  
&\geq & \limsup_{n\to \infty} \frac{\log\mu(B(x,n,\ve))}{-n} ~\frac{1}{\frac{-\log\ve}{n}+\log \Lambda}\\ 
&\geq & \limsup_{n\to \infty} \frac{\log\mu(B(x,n,\ve))}{-n} \frac{1}{\log\Lambda};
\end{eqnarray*}
it follows that, for $\mu$-a.e.$\;x\in X$,
\begin{eqnarray}
\label{entropylocal}
\overline{d}_{\mu}(x)  \ge   \lim_{\ve\to 0}\limsup_{n\to \infty} \frac{\log\mu(B(y,\ve \Lambda^{-n}))}{\log \ve \Lambda^{-n}}  
&\geq & \lim_{\ve\to 0} \limsup_{n\to \infty} \frac{\log\mu(B(x,n,\ve))}{-n} \frac{1}{\log\Lambda} \nonumber \\
&=& h_{\mu}(T,x)\frac{1}{\log\Lambda},
\end{eqnarray}
where $h_{\mu}(T,x)$ is the so-called local entropy of $\mu$ at $x$. One also has, using Brin-Katok Theorem, that $\int h_{\mu}(T,x) d\mu(x)= h_{\mu}(T)$ (the compacity of $X$ is required in this step; see \cite{BrinKatok}). Hence, there exists a measurable set $B$, with $\mu(B)>0$, such that, for each $x\in B$,  $\overline{d}_{\mu}(x)\geq \frac{h_{\mu}(T)}{\log \Lambda}$. The result is now a consequence of Proposition~\ref{BGT}.
\end{proof}

\begin{lema}
\label{dimpos01}
Let $X$ be a Polish metric space and let $\mu\in\M_e$. Then, $\dim_P^-(\mu)\geq \frac{h_{\mu}(T)}{\log \Lambda}$. 
\end{lema}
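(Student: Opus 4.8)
The plan is to reduce the Polish-space statement to the compact case already handled in Lemma~\ref{dimpos}, exploiting that $\mu$ is ergodic. First I would recall, via Remark~\ref{forergodic}, that since $T$ and $T^{-1}$ are Lipschitz and $\mu$ is ergodic, the function $\overline{d}_\mu(x)$ is constant $\mu$-a.e.; call this constant $D$. By Proposition~\ref{BGT}, $\dim_P^-(\mu)=\mu\textrm{-}\essinf\overline{d}_\mu(x)=D=\mu\textrm{-}\esssup\overline{d}_\mu(x)=\dim_P^+(\mu)$, so it suffices to show $D\ge h_\mu(T)/\log\Lambda$. (If $h_\mu(T)=0$ there is nothing to prove, so assume $h_\mu(T)>0$, which in particular forces $h_\mu(T)<\infty$ to be the only interesting subcase, though the argument below will not actually need finiteness.)

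Next I would invoke inner regularity (tightness) of $\mu$ on the Polish space $X$: for each $\eta>0$ there is a compact set $K=K_\eta\subset X$ with $\mu(K)>1-\eta$. One cannot directly restrict $T$ to $K$, but one can pass to the induced system on a set of large measure, or, more cleanly, use the Poincaré-recurrence/Kac-type bookkeeping that underlies the Brin–Katok theorem. Concretely, I would run the Brin–Katok local-entropy estimate directly on the Polish space: the inequality
\begin{equation*}
\overline{d}_\mu(x)\ \ge\ \lim_{\ve\to 0}\limsup_{n\to\infty}\frac{\log\mu(B(x,n,\ve))}{-n}\,\frac{1}{\log\Lambda}
\end{equation*}
derived verbatim as in the proof of Lemma~\ref{dimpos} holds for $\mu$-a.e.\ $x\in X$ with no compactness needed, since it only uses the Lipschitz bound $\rho(T^iy,T^ix)\le\Lambda^i\rho(x,y)$ to get $B(x,\ve\Lambda^{-n})\subset B(x,n,\ve)$. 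The only place compactness entered Lemma~\ref{dimpos} was the identity $\int h_\mu(T,x)\,d\mu(x)=h_\mu(T)$. So the real task is to recover a lower bound on $\int\limsup_n -\tfrac1n\log\mu(B(x,n,\ve))\,d\mu$ in terms of $h_\mu(T)$ in the non-compact setting.

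To do this I would restrict attention to the compact set $K_\eta$ and apply the compact Brin–Katok theorem to a well-chosen object: either the first-return map $T_K$ to $K_\eta$ with its induced measure $\mu_K=\mu(\cdot\cap K)/\mu(K)$ (which is $T_K$-invariant and ergodic by Kac), noting $h_{\mu_K}(T_K)=h_\mu(T)/\mu(K)$ by Abramov's formula, and then relating dynamical balls for $T_K$ inside $K$ to dynamical balls for $T$ in $X$ using the Lipschitz constants of $T$; or, more simply, observe that on $K_\eta$ the Bowen metrics $d_n^T$ and $d_n^{T_K}$ are comparable up to the Lipschitz distortion, so that local entropies transfer. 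Combining, one gets for $\mu$-a.e.\ $x\in K_\eta$ that $\overline{d}_\mu(x)\ge h_{\mu_K}(T_K)/\log\Lambda'' = h_\mu(T)/(\mu(K)\log\Lambda'')$ for a suitable effective Lipschitz constant, and since $\overline{d}_\mu\equiv D$ is constant $\mu$-a.e.\ and $\mu(K_\eta)>0$, letting $\eta\to 0$ and tracking the constants yields $D\ge h_\mu(T)/\log\Lambda$.

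The main obstacle I anticipate is precisely this passage from the compact Brin–Katok/entropy identity to the Polish setting: Abramov's and Kac's formulas are standard, but one must be careful that the induced map $T_K$ need not be Lipschitz with a clean constant (return times are unbounded), so matching the Bowen balls of $T_K$ with those of $T$ requires controlling orbit segments that leave and re-enter $K$. A cleaner route that sidesteps induced maps is to use the Katok entropy formula (entropy via $(n,\ve)$-spanning/separated sets of measure $>1-\eta$), which holds on Polish spaces for ergodic $\mu$ and directly produces, for each $\eta$, a set of measure $\ge 1-\eta$ on which the exponential decay rate of $\mu$-measure of Bowen balls is at least $h_\mu(T)-o(1)$; feeding this into the displayed inequality above and using the $\mu$-a.e.\ constancy of $\overline d_\mu$ gives the bound. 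I would present the argument in that form.
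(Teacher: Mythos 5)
Your plan is correct and essentially the paper's own argument: ergodicity plus Proposition~\ref{uniforminvariance} gives that $\overline{d}_{\mu}$ is $\mu$-a.e.\ constant, the Lipschitz inclusion $B(x,\ve\Lambda^{-n})\subset B(x,n,\ve)$ gives the local inequality of~\eqref{entropylocal} without any compactness, and the remaining ingredient is exactly a Polish-space Katok/Brin--Katok-type lower bound on the decay of Bowen-ball measures, which the paper imports as Lemma~2.8 and Theorem~2.9 of~\cite{Riquelme} ($\int\underline{h}_{\mu}(T,x)\,d\mu\ge h_{\mu}(T)$) rather than re-deriving it from Katok's formula as you sketch. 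Your alternative induced-map/Abramov route is unnecessary, and the only loose point is that passing from Katok's formula to the pointwise Bowen-ball decay rate is not quite ``direct'' (it needs the standard separated-set covering argument), but this is precisely the cited result, so the proof stands.
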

\begin{proof}
  Since  $\mu$ is ergodic, it follows from Proposition \ref{uniforminvariance} that $\overline{d}_{\mu}(x)$ is constant for $\mu$-a.e.$\,x$ (this constant may be infinite).
  
One also has, by Lemma~2.8 in~\cite{Riquelme}, that $\int \underline{h}_{\mu}(T, x)d\mu(x)=\mu\textrm{-}\essinf \underline{h}_{\mu}(T, x)$, and then, by Theorem~2.9 in \cite{Riquelme}, that $\int \underline{h}_{\mu}(T, x)d\mu(x)\ge h_\mu(T)$.  Thus, by inequality~\eqref{entropylocal}, one gets, for $\mu$-a.e.$\;x\in X$,
\[\overline{d}_{\mu}(x)=\int \overline{d}_{\mu}(x)d\mu(x)\ge \int \underline{h}_{\mu}(T, x)\frac{1}{\log\Lambda}d\mu(x)\ge h_\mu(T)\frac{1}{\log\Lambda}.\] 
The result is obtained again by an application of Proposition~\ref{BGT}.
\end{proof}

Now, we return to the specific setting where $X=\prod_{-\infty}^{+\infty}M$, with $M$ a perfect and compact metric space, with $\rho$ given by~\eqref{metric} and with $T$ the full-shift over $X$.

The next result is an extension of Lemma 6 in~\cite{Sigmund1971} to $X=\prod_{-\infty}^{+\infty} M$, where $M$ is perfect and compact (the hypothesis of $M$ being perfect is required to guarantee that one can always choose the periodic point $x$ in the statement of Lemma~\ref{denseperiodic} in such way that $x_i\neq x_j$ if $i\neq j$, $i,j=1,\ldots,s$; see Remark~3.4 in~\cite{Oxtoby1963}). %Let $\overline{\mathcal{M}}(T)$ denote the space of $T$-invariant measures on $\overline{X}=\prod_{-\infty}^{+\infty}\overline{M}$, endowed with the weak topology, where $\overline{M}$ is a compactification of $M$.

\begin{lema}[Lemma 6 in \cite{Sigmund1971}]
\label{denseperiodic}
Let $X=\prod_{-\infty}^{+\infty}M$, where $M$ is perfect and compact, let $\mu\in\M(T)$ and let $s_0>0$. Then, $\mu$ can be approximated by a $T$-periodic measure $\mu_x\in \M_e$ such that $x\in X$ has period $s\geq s_0$ and $x_i\neq x_j$ if $i\neq j$, $i,j=1,\ldots,s$.
\end{lema}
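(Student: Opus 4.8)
The plan is to combine the classical density of $T$-periodic orbit measures in $\M(T)$ with a perturbation of the periodic point that exploits the absence of isolated points in $M$. Since $M$ is compact, $X=\prod_{-\infty}^{+\infty}M$ is compact, and by Stone--Weierstrass the cylinder functions (continuous functions depending on finitely many coordinates) are dense in $C(X)$; Lipschitz cylinder functions are in turn dense among these. Hence, fixing $\nu\in\M(T)$, $\ve>0$, $m\in\N$ and a finite family $\mathcal F\subset C(X)$ of functions depending only on the coordinates in $\{-m,\dots,m\}$ and with Lipschitz constant at most $L$, it suffices to produce a $T$-periodic point $x$, with minimal period $s\ge s_0$ and with $x_i\neq x_j$ whenever $1\le i\neq j\le s$, such that $\bigl|\int f\,d\mu_x-\int f\,d\nu\bigr|<\ve$ for every $f\in\mathcal F$, where $\mu_x=\tfrac1s\sum_{i=0}^{s-1}\delta_{T^ix}\in\M_e$ (periodic orbit measures are ergodic).

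First I would invoke the density of $T$-periodic (closed orbit) measures in $\M(T)$, which holds by the arguments of Lemma~6 in~\cite{Sigmund1971} (see also~\cite{Oxtoby1963,Parthasarathy1961}); these transfer verbatim to $X=\prod_{-\infty}^{+\infty}M$, since concatenating finite words over the alphabet $M$ produces legitimate points of the full shift, so one decomposes $\nu$ ergodically, extracts from the orbit of each component (via Birkhoff) a long block matching that component on the coordinates in $\{-m,\dots,m\}$, concatenates these blocks with multiplicities approximating the decomposition weights into a single word $W=(w_1,\dots,w_\ell)\in M^\ell$, and takes $y=\overline{W}$, the bi-infinite repetition of $W$; the boundary mismatches between consecutive blocks are negligible once the blocks are long compared with $m$, so the orbit measure $\mu_y$ is $\ve/2$-close to $\nu$ on $\mathcal F$. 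Since replacing $W$ by a repetition $W^k$ changes neither the orbit of $y$ nor $\mu_y$, I may assume $\ell\ge s_0$.

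Now comes the step that uses perfectness. Because $M$ is perfect, every nonempty open subset of $M$ is infinite (a finite subset of a metric space is closed and discrete, so a finite nonempty open set would contain an isolated point of $M$). Hence I can choose, inductively on $i=1,\dots,\ell$, a point $\tilde w_i\in M$ with $\rho(\tilde w_i,w_i)<\delta$ and $\tilde w_i\notin\{\tilde w_1,\dots,\tilde w_{i-1}\}$, where $\delta>0$ is fixed below. Set $x:=\overline{(\tilde w_1,\dots,\tilde w_\ell)}$. Then $x$ has minimal period exactly $s:=\ell\ge s_0$ (a period block with pairwise distinct entries admits no proper sub-period), $x_i\neq x_j$ for $1\le i\neq j\le s$, and $\mu_x\in\M_e$. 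Moreover $\rho(x_n,y_n)<\delta$ for every $n\in\Z$, so by the metric~\eqref{metric} one has $d(T^ix,T^iy)\le \delta\sum_{|n|\ge0}2^{-|n|}=3\delta$ for \emph{every} $i\in\Z$ --- crucially with no loss through iterates of $T$, because shifting merely relabels the finitely many perturbed coordinates. Therefore, for each $f\in\mathcal F$,
\[
\left|\int f\,d\mu_x-\int f\,d\mu_y\right|
=\left|\frac{1}{s}\sum_{i=0}^{s-1}\bigl(f(T^{i}x)-f(T^{i}y)\bigr)\right|
\le 3L\delta ,
\]
and choosing $\delta<\ve/(6L)$ yields $\bigl|\int f\,d\mu_x-\int f\,d\nu\bigr|<\ve$ for all $f\in\mathcal F$, as required.

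I expect the point requiring the most care to be the simultaneous control of the three demands on the periodic word --- that it approximate $\nu$ on $\{-m,\dots,m\}$, that its length be at least $s_0$, and that its entries be pairwise distinct --- rather than any one of them in isolation: the first is the classical argument, the second is obtained for free by repeating the approximating block (which leaves the orbit measure unchanged), and only the third requires perfectness, through the elementary fact that open balls in a perfect metric space are infinite. One must also note that ``period $s$'' in the statement has to be read as \emph{minimal} period, which is automatic once the $s$ coordinates $x_1,\dots,x_s$ are pairwise distinct; this is precisely why the approximating block must be taken of length $\ge s_0$ \emph{before} perturbing, and why the perfectness hypothesis cannot be dropped here (compare Remark~3.4 in~\cite{Oxtoby1963}).
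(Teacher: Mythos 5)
Your proof is correct and takes essentially the route the paper intends: the paper does not write out a proof but refers to Lemma~6 of \cite{Sigmund1971} and Remark~3.4 of \cite{Oxtoby1963}, which is exactly your scheme of first approximating $\mu$ by a periodic orbit measure (with period block taken of length at least $s_0$ by repetition) and then using perfectness of $M$ to perturb the entries of the block into pairwise distinct ones, the uniform coordinatewise bound in the metric~\eqref{metric} keeping the orbit measure close on the test functions. The only cosmetic slip is the phrase ``finitely many perturbed coordinates'' (every coordinate is perturbed, each by less than $\delta$), which does not affect the estimate $d(T^ix,T^iy)\le 3\delta$.
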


%\begin{rek} Since, for each $x\in X$, $\mu_x(\cdot)=\frac{1}{k_x}\sum_{i=0}^{k_x-1}\delta_{T^ix}(\cdot)$, where $k_x$ is the period of $x$, the measure presented in the statement of Lemma~\ref{denseperiodic} is clearly supported on $X$, so it belongs to $\M(T)$. 
%\end{rek}

The next result is an extension of Theorem~2 in~\cite{Sigmund1971} (which is proved using Lemma~\ref{denseperiodic}) to the space $X=\prod_{-\infty}^{+\infty} M$, with $M$ a perfect Polish space (the original result was proved for $M=\mathbb{R}$); % with Proposition~6.1 in~\cite{Oxtoby1963}. W
we leave the details to the dedicated reader.

\begin{propo}[Theorem~2 in \cite{Sigmund1971}]
\label{central}
Let $L>0$. Then, $\{\mu\in \M_e\mid h_\mu(T)\ge L\}$ is a dense subset of $\mathcal{M}_e$.
\end{propo}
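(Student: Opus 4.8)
The plan is to show that the set $\{\mu\in\M_e\mid h_\mu(T)\ge L\}$ meets every basic weak-neighborhood of every $\mu\in\M_e$, hence is dense. Fix $\mu\in\M_e$ and a neighborhood $U=\{\nu\in\M(T)\mid |\int f_l\,d\nu-\int f_l\,d\mu|<\ve,\ l=1,\dots,N\}$; since the weak topology on $\M(X)$ is generated by the maps $\nu\mapsto\int f\,d\nu$ with $f$ bounded and Lipschitz, we may assume each $f_l$ is Lipschitz with constant $L_l$ and $\|f_l\|_\infty\le 1$. By Lemma~\ref{denseperiodic} there is a $T$-periodic measure $\mu_x\in\M_e$, with $x$ of period $s$ and pairwise distinct coordinates $x_0,\dots,x_{s-1}$, such that $|\int f_l\,d\mu_x-\int f_l\,d\mu|<\ve/2$ for every $l$; it therefore suffices to produce $\nu\in\M_e$ with $h_\nu(T)\ge L$ and $|\int f_l\,d\nu-\int f_l\,d\mu_x|<\ve/2$ for every $l$.

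The next step is to ``fatten'' the periodic orbit of $x$ into a subsystem carrying entropy $\log m$, where $m:=\lceil e^L\rceil$. Choose $\delta>0$ with $\delta<\ve/(6\max_l L_l)$ and $2\delta<\min_{r\ne r'}\rho(x_r,x_{r'})$. Since $M$ is perfect, each $x_r$ is an accumulation point, so for each $r\in\{0,\dots,s-1\}$ one can pick a set $C_r\subset B(x_r,\delta)$ of exactly $m$ points with $x_r\in C_r$; by the choice of $\delta$ the $C_r$ are pairwise disjoint. (Perfectness of $M$ enters precisely here, both to form clouds of the prescribed size $m$ and, through the distinct-coordinate clause of Lemma~\ref{denseperiodic}, to keep them disjoint.) For $a\in\{0,\dots,s-1\}$ set $\tilde Y_a:=\{y\in X\mid y_k\in C_{(k-a)\bmod s}\text{ for all }k\in\Z\}$ and $\tilde Y:=\bigcup_{a=0}^{s-1}\tilde Y_a$. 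One checks directly that $\tilde Y$ is compact and $T$-invariant, that $T\tilde Y_a=\tilde Y_{(a+1)\bmod s}$ and the $\tilde Y_a$ are pairwise disjoint, and that the first-return map $T^s$ on $\tilde Y_0$ is topologically conjugate to the full shift $\sigma^s$ on $\{1,\dots,m\}^{\Z}$ (identify $y_k$ with the index of $y_k$ inside $C_{k\bmod s}$). Let $\beta$ be the measure on $\tilde Y_0$ corresponding under this conjugacy to the $(1/m,\dots,1/m)$-Bernoulli measure; then $\beta$ is $T^s$-invariant, $T^s$-ergodic, and $h_\beta(T^s)=s\log m$. Finally define $\nu:=\tfrac1s\sum_{a=0}^{s-1}(T^a)_*\beta$.

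It remains to verify the three properties of $\nu$. First, $\nu$ is $T$-invariant, and for any $T$-invariant $A\in\mathcal B$ one has $\nu(A)=\tfrac1s\sum_a\beta(T^{-a}A)=\beta(A)\in\{0,1\}$, using $T^{-a}A=A$ and the $T^s$-ergodicity of $\beta$; hence $\nu\in\M_e$. Second, by the iterate formula $h_\nu(T)=\tfrac1s h_\nu(T^s)$, by affinity of the metric entropy, and by the fact that $(X,T^s,(T^a)_*\beta)$ is isomorphic to $(X,T^s,\beta)$ via the homeomorphism $T^a$, one gets $h_\nu(T)=\tfrac1s\bigl(\tfrac1s\sum_a h_{(T^a)_*\beta}(T^s)\bigr)=\tfrac1s h_\beta(T^s)=\log m\ge L$. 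Third, every $y\in\tilde Y_0$ satisfies $\rho(y_k,x_k)<\delta$ for all $k$, hence $\rho((T^ay)_k,(T^ax)_k)<\delta$ for all $k$ and all $a$, whence $d(T^ay,T^ax)<3\delta$ by~\eqref{metric}; since $\int f_l\,d\mu_x=\tfrac1s\sum_a f_l(T^ax)$ and $\int f_l\,d\nu=\tfrac1s\sum_a\int f_l(T^ay)\,d\beta(y)$, this yields $|\int f_l\,d\nu-\int f_l\,d\mu_x|\le 3L_l\delta<\ve/2$. Combining with the previous estimate, $\nu\in U$, and the density of $\{\mu\in\M_e\mid h_\mu(T)\ge L\}$ in $\M_e$ follows.

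The only delicate points are bookkeeping ones. One must fatten at \emph{every} coordinate (not merely along a sparse sublattice of $\Z$), so that $h_\nu(T)$ comes out equal to $\log m$ independently of the period $s$, and the unambitious choice $m=\lceil e^L\rceil$ already does the job; and one must be entitled to use affinity of entropy and the identity $h_\nu(T^s)=s\,h_\nu(T)$ without assuming $M$ compact, which is legitimate because all of the entropy of $\nu$ is supported on the compact set $\tilde Y$, conjugate to a subshift over the finite alphabet $\{1,\dots,m\}$. One should also remember to reduce first to bounded Lipschitz test functions, which is what makes the coordinatewise estimate $d(T^ay,T^ax)<3\delta$ directly usable in the closeness bound.
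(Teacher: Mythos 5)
Your proof is correct and follows essentially the route the paper itself indicates but leaves to the reader: approximate $\mu$ by a periodic measure with pairwise distinct coordinates via Lemma~\ref{denseperiodic}, then fatten the orbit into a finite-alphabet subshift (here with $m=\lceil e^L\rceil$ symbols per coordinate) carrying an ergodic measure of entropy $\log m\ge L$ that stays weakly close — i.e., Sigmund's original argument adapted to the perfect-alphabet product space. The auxiliary steps you flag (reduction to bounded Lipschitz test functions, affinity of entropy, the power rule $h_\nu(T^s)=s\,h_\nu(T)$, and ergodicity of the averaged measure) are all standard and correctly used, so your write-up supplies precisely the details the paper omits.
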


%\textcolor{blue}{
%\begin{rek}
%The extensions of Lemma~\ref{denseperiodic} and Proposition~\ref{central} to a metric, perfect and separable metric space $X$ are given of analogous form that done in Remark~\ref{Oxtobyargument}.
%\end{rek}
%}

\begin{propo}
\label{densepack}
Let $L>0$. Then, $\{\mu\in \M_e\mid\dim_P^-(\mu)\ge L\}$ is a dense subset of $\mathcal{M}_e$.
\end{propo}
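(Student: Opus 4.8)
The plan is to combine the two results that immediately precede the statement: Lemma~\ref{dimpos01}, which guarantees $\dim_P^-(\mu)\ge h_\mu(T)/\log\Lambda$ for every ergodic $\mu$, and Proposition~\ref{central}, which says that ergodic measures of arbitrarily large entropy are dense in $\M_e$. Together these reduce the assertion to an elementary set-theoretic observation.

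First I would fix $L>0$ and recall that, in the present setting, $T$ is the full shift on $X=\prod_{-\infty}^{+\infty}M$ with the metric~\eqref{metric}; a direct computation (reindexing the series defining $d$) shows that $T$ is Lipschitz with constant $\Lambda=2>1$, so that $\log\Lambda>0$ and $L\log\Lambda$ is a well-defined positive real number. Next, I would apply Proposition~\ref{central} with the choice $L':=L\log\Lambda$ to conclude that the set
\[\mathcal{E}_{L'}:=\{\mu\in\M_e\mid h_\mu(T)\ge L\log\Lambda\}\]
is dense in $\M_e$. By Lemma~\ref{dimpos01}, every $\mu\in\mathcal{E}_{L'}$ satisfies
\[\dim_P^-(\mu)\ge\frac{h_\mu(T)}{\log\Lambda}\ge\frac{L\log\Lambda}{\log\Lambda}=L,\]
so that $\mathcal{E}_{L'}\subset\{\mu\in\M_e\mid\dim_P^-(\mu)\ge L\}$. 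Since a subset of $\M_e$ that contains a dense set is itself dense in $\M_e$, the conclusion follows.

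There is essentially no obstacle at this stage: the substantive work has already been carried out upstream. The entropy lower bound for the lower packing dimension (Lemma~\ref{dimpos01}) rests, through inequality~\eqref{entropylocal}, on the Brin--Katok local entropy formula and on Riquelme's estimates for the local entropy function, while the density of large-entropy ergodic measures (Proposition~\ref{central}) rests on Lemma~\ref{denseperiodic} and, crucially, on the perfectness of $M$, which is what allows one to produce periodic points whose coordinates are pairwise distinct. The only point requiring a moment's care here is to ensure $\Lambda>1$ so that division by $\log\Lambda$ makes sense and preserves the inequality, which is automatic for the metric~\eqref{metric}.
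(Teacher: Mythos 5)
Your argument is correct and follows essentially the same route as the paper: invoke Proposition~\ref{central} to get a dense set of ergodic measures with entropy at least $L\log 2$, then apply Lemma~\ref{dimpos01} (with the Lipschitz constant $\Lambda=2$ of the shift for the metric~\eqref{metric}) to conclude $\dim_P^-\ge L$ on that set. The only cosmetic difference is that you phrase density via set containment, while the paper spells it out through sub-basic weak-topology neighborhoods $V_\mu(f_1,\ldots,f_r;\delta)$; the content is identical.
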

\begin{proof}
  Let $\delta>0$, and let $\mu\in\M_e$. It is straightforward to show that $T$ is a Lipschitz function with constant $\Lambda=2$.  Set $K:=L\log 2$. It follows from Proposition~\ref{central} (see Lemma 7 in~\cite{Sigmund1971}) that given any neighborhood of $\mu$ (in the induced topology) of the form $V_\mu(f_1,\ldots,f_r;\delta)=\{\nu\in\M_e\mid|\int f_i\,d\nu-\int f_i\,d\mu|<\delta$, $i=1,\ldots,r\}$ (where $\delta>0$ and each $f_i:X\rightarrow\mathbb{C}$ is continuous and bounded; such sets form a sub-basis of the weak topology), there exists a measure $\zeta\in V_\mu(f_1,\ldots,f_r;\delta)\cap\M_e$ such that $h_\zeta(T)\geq K$.  Now, by Lemma \ref{dimpos01}, one has $\dim_P^-(\zeta)\geq \frac{h_{\zeta}(T)}{\log 2}\ge K \frac{1}{\log2}=L$.
\end{proof}

%\begin{rek}\label{Rdensepack}
%  The extension of Proposition~\ref{densepack} to the case where $M$ is a perfect Polish metric space combines the arguments presented in Remark~\ref{Oxtobyargument} with the final part of the proof of Theorem~2 in~\cite{Sigmund1971}, by letting $\tilde{X}:=\prod_{-\infty}^{+\infty}\tilde{M}$.
%\end{rek}

\begin{propo}
\label{densehausdorff}
Let $M$ be a Polish metric space. Then, $\{\mu\in \M_e\mid\dim_{H}(\mu)=0\}$ is a dense subset of $\M_e$.    
\end{propo}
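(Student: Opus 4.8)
The plan is to approximate an arbitrary ergodic measure by $T$-periodic measures, which already lie in the set $\{\mu\in\M_e\mid\dim_H(\mu)=0\}$. Recall that a $T$-periodic measure has the form $\mu_x=\frac{1}{k}\sum_{i=0}^{k-1}\delta_{T^ix}$, where $x\in X$ is $T$-periodic of period $k$; such a measure is ergodic and its topological support is the finite set $F_x=\{x,Tx,\ldots,T^{k-1}x\}$. So the first step is to check that $\dim_H(\mu_x)=0$ for every such $\mu_x$.

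This step is immediate: a finite set $F\subset X$ satisfies $H^\alpha(F)=0$ for every $\alpha>0$ (cover $F$ by finitely many sets of diameter at most $\delta$ and let $\delta\to0$), so $\dim_H(F)=0$. Since $\mu_x(F_x)=1>0$ and $\mu_x(X\setminus F_x)=0$, Definition~\ref{HPdim} yields $\dim_H^-(\mu_x)\le\dim_H(F_x)=0$ and $\dim_H^+(\mu_x)\le\dim_H(F_x)=0$, hence $\dim_H(\mu_x)=0$; that is, $\mu_x\in\{\mu\in\M_e\mid\dim_H(\mu)=0\}$.

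It then remains to invoke the density of the set of $T$-periodic measures in $\M(T)$ --- the fact underlying the proof of item~I (see \cite{Oxtoby1963,Parthasarathy1961}, and Lemma~\ref{denseperiodic} in the product setting). Since every $T$-periodic measure is ergodic, this dense set is contained in $\M_e$; and a subset of $\M_e$ that is dense in $\M(T)$ is automatically dense in $\M_e$ for the subspace topology. Therefore $\{\mu\in\M_e\mid\dim_H(\mu)=0\}$ contains a subset that is dense in $\M_e$, and the conclusion follows.

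There is essentially no hard step here; the only point meriting care is that the density of $T$-periodic measures be available at this level of generality ($M$ merely Polish, not assumed compact or perfect). This is indeed the case: weakly approximating a prescribed invariant measure by a periodic-orbit measure uses only that finite cylinders generate $\mathcal{B}$ and that prescribed finite-dimensional marginals can be matched by finite words over $M$, and neither compactness nor the absence of isolated points of $M$ enters the argument. (Perfectness of $M$ is needed only later --- e.g.\ in Lemma~\ref{denseperiodic} --- to additionally force the coordinates of the periodic point to be pairwise distinct, which plays no role in the Hausdorff-dimension bound above.)
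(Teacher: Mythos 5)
Your proposal is correct and follows essentially the same route as the paper: periodic-orbit measures have zero Hausdorff dimension, and they are dense in $\M_e$ (Parthasarathy), which suffices. The only cosmetic difference is that you get $\dim_H(\mu_x)=0$ directly from Definition~\ref{HPdim} via the finite support, whereas the paper computes the local dimensions $\overline{d}_{\mu}(T^ix)=0$ and appeals to Proposition~\ref{BGT}; both verifications are immediate.
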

\begin{proof}
Let $\mu$ be the $T$-periodic measure associated with the $T$-periodic point $x\in X$, and denote its period by $k$. Naturally, $\mu(\cdot)=\frac{1}{k}\sum_{i=0}^{k-1}\delta_{f^i(x)}(\cdot)$, and for each  $i=0,\cdots,k-1$,  one has
\[\overline{d}_{\mu}(T^i(x))=\limsup_{r\to 0} \frac{\log\mu(B(T^i(x),r))}{\log r}= \limsup_{r\to 0} \frac{-\log k}{\log r}=0.\]

 The result follows now from the fact that the set of $T$-periodic measures is dense in $\M_e$ (see Theorem~3.3 in~\cite{Parthasarathy1961}).
\end{proof}

\begin{rek} The result stated in Proposition~\ref{densehausdorff} is valid for any topological dynamical system $(X,T)$ such that the set of $T$-periodic measures is dense in $\M_e$; this is particularly true for systems which satisfy the specification property (see Remark~\ref{MIR} for more details).
  \end{rek}

\begin{proof3}
\begin{itemize}
\item[\textbf{III.}] Note that, by Propositions~\ref{Gdelta3},~\ref{densepack} and item~I of Theorem~\ref{teocentral3}, $PD=\bigcup_{L\ge 1}PD(L)$ is a countable intersection of dense $G_{\delta}$ subsets of $\M_e$. 
%\item[\textbf{III.}] Note that, by Propositions~\ref{Gdelta3} and~\ref{densepack}, $PD=\bigcup_{L\ge 1}PD(L)$ is a countable intersection of dense $G_{\delta}$ subsets of $\M(T)$. 

\item[\textbf{IV.}] It follows from Propositions~\ref{Gdelta3} and~\ref{densehausdorff} that $HD$ is a dense $G_\delta$ subset of $\M_e$. The result is now a consequence of item~I in~Theorem~\ref{teocentral3}. 
\end{itemize}
\end{proof3}

The next statement says that each $\mu\in HD\cap PD\cap C_X$ is supported on a dense $G_{\delta}$ subset of $X$.

\begin{propo}
\label{localgenericpoints}
Let $\mu\in PD\cap HD\cap C_X$. Then, each of the sets $\overline{\mathfrak{D}}_{\mu}=\{x\in X\mid \overline{d}_{\mu}(x)=\infty\}$ and $\underline{\mathfrak{D}}_{\mu}=\{x\in X\mid  \underline{d}_{\mu}(x)=0\}$ is a  dense $G_{\delta}$ subset of $X$.
\end{propo}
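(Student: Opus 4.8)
The plan is to show each of $\overline{\mathfrak{D}}_\mu$ and $\underline{\mathfrak{D}}_\mu$ is a countable intersection of open dense subsets of $X$; by Baire's theorem (recall $X$ is Polish, hence a complete, or at least completely metrizable, metric space), this gives the $G_\delta$ and density claims at once. I will treat $\overline{\mathfrak{D}}_\mu$; the argument for $\underline{\mathfrak{D}}_\mu$ is entirely dual, replacing $\overline{\beta}_\mu$ by $\underline{\beta}_\mu$, the condition ``$>N$'' by ``$<1/N$'', and using that $\dim_H(\mu)=0$ (so $\mu\textrm{-}\essinf\underline{d}_\mu=0$ by Proposition~\ref{BGT}) in place of $\dim_P(\mu)=\infty$.

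For the $G_\delta$ part, write $\overline{\mathfrak{D}}_\mu=\bigcap_{N\in\N}\bigcap_{s\in\N}U_{N,s}$, where $U_{N,s}=\{x\in X\mid\overline{\beta}_\mu(x,s)>N\}$; indeed $\overline{d}_\mu(x)=\lim_{s\to\infty}\overline{\beta}_\mu(x,s)=\sup_s\overline{\beta}_\mu(x,s)$ since $s\mapsto\overline{\beta}_\mu(x,s)$ is non-increasing, so $\overline{d}_\mu(x)=\infty$ iff $\overline{\beta}_\mu(x,s)>N$ for all $N,s$. Each $U_{N,s}$ is open: by Lemma~\ref{asympt1}, for fixed $t$ the map $x\mapsto f_{x,1/t}(\mu)$ is continuous and strictly positive (wait — it need not be positive everywhere; but where $f_{x,1/t}(\mu)=0$ one has $\overline{\beta}_\mu(x,s)=+\infty$ by convention), and as a supremum over $t>s$ of continuous functions $x\mapsto\frac{\log f_{x,1/t}(\mu)}{-\log t}$, the map $x\mapsto\overline{\beta}_\mu(x,s)$ is lower semi-continuous, exactly as recorded in Claim~3 of the proof of Proposition~\ref{Gdelta3} (the slice at fixed $\mu$). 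Hence $U_{N,s}=\overline{\beta}_\mu(\cdot,s)^{-1}((N,\infty])$ is open.

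For density, fix $N,s\in\N$ and an arbitrary nonempty open $V\subset X$; I must produce a point of $U_{N,s}$ in $V$. Here is where the hypotheses $\mu\in PD\cap C_X$ enter. Since $\supp(\mu)=X$, $\mu(V)>0$; since $\dim_P(\mu)=\dim_P^-(\mu)=\infty$, Proposition~\ref{BGT} gives $\mu\textrm{-}\essinf\overline{d}_\mu(x)=\infty$, so the set $\{x\mid\overline{d}_\mu(x)>N\}$ has full $\mu$-measure, hence meets $V$ (as $\mu(V)>0$). Any such $x\in V$ satisfies $\overline{d}_\mu(x)=\sup_{s'}\overline{\beta}_\mu(x,s')>N$, and since $\overline{\beta}_\mu(x,\cdot)$ is non-increasing, $\overline{\beta}_\mu(x,s)\ge\overline{\beta}_\mu(x,s')>N$ is not immediate — rather, I should note $\overline{\beta}_\mu(x,s)\ge\overline{d}_\mu(x)>N$ since $\overline{d}_\mu(x)=\inf_{s'}\overline{\beta}_\mu(x,s')$... wait, that is wrong too: $\overline{d}_\mu(x)=\lim_{s'\to\infty}\overline{\beta}_\mu(x,s')$ and the sequence is non-increasing, so $\overline{\beta}_\mu(x,s)\ge\overline{d}_\mu(x)>N$. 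Thus $x\in U_{N,s}\cap V$, proving $U_{N,s}$ is dense. Applying Baire's theorem to $\{U_{N,s}\}_{N,s\in\N}$ yields that $\overline{\mathfrak{D}}_\mu$ is a dense $G_\delta$; the dual argument, using $HD$, yields the same for $\underline{\mathfrak{D}}_\mu$, and this completes the proof. The only delicate point is the bookkeeping with the convention $\overline{\beta}_\mu(x,s)=+\infty$ when $\mu$ charges no ball about $x$ — but since we assume full support this case never arises inside $X$, so lower semi-continuity of $x\mapsto\overline{\beta}_\mu(x,s)$ on all of $X$ follows cleanly from Lemma~\ref{asympt1}. $\Box$
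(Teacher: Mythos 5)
Your proposal is correct and follows essentially the same route as the paper: the same decomposition $\overline{\mathfrak{D}}_{\mu}=\bigcap_{N,s}\{x\mid\overline{\beta}_{\mu}(x,s)>N\}$ with openness coming from the lower semi-continuity of $x\mapsto\overline{\beta}_{\mu}(x,s)$ (Claim 2 of Proposition~\ref{Gdelta3}), and density coming from $\mu(\overline{\mathfrak{D}}_{\mu})=1$ (via $PD$ and Proposition~\ref{BGT}) together with $\supp(\mu)=X$; the paper simply concludes density of $\overline{\mathfrak{D}}_{\mu}$ directly from full measure plus full support, whereas you verify density of each $U_{N,s}$ and invoke Baire, which is an equivalent bookkeeping. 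The only blemish is the stray assertion $\lim_{s\to\infty}\overline{\beta}_{\mu}(x,s)=\sup_s\overline{\beta}_{\mu}(x,s)$ (it is the infimum, since $s\mapsto\overline{\beta}_{\mu}(x,s)$ is non-increasing), but you correct this later and the inequality $\overline{\beta}_{\mu}(x,s)\ge\overline{d}_{\mu}(x)$ that the argument actually uses is right.
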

\begin{proof}  We just present the proof that $\overline{\mathfrak{D}}_{\mu}$ is a dense $G_{\delta}$ subset of $X$. 

\emph{$\overline{\mathfrak{D}}_{\mu}$ is a $G_{\delta}$ set in X.} Let  $\alpha>0$, let $s\in \N$, and set $Z_{\mu,s}(\alpha)=\{x\in X\mid \overline{\beta_\mu}(x,s)\leq\alpha\}$. Following the proof of Claim 2 in Proposition \ref{Gdelta3}, it is clear that $Z_{\mu,s}(\alpha)$ is  closed. Thus, taking $\alpha=n\in\N$, it follows that $\overline{\mathfrak{D}}_{\mu}=\bigcap_{n\in \N}\bigcap_{s\in \N} (X\setminus Z_{\mu,s}(n))$ is  a $G_{\delta}$ subset of $X$.

\emph{$\overline{\mathfrak{D}}_{\mu}$ is dense in $X$}. Since $\mu\in PD$, one has $\mu(\overline{\mathfrak{D}}_{\mu})=1$. Suppose that $\overline{\mathfrak{D}}_{\mu}$  is not dense; then, there exist $x\in X$ and $\ve>0$ such that $B(x,\ve)\cap \overline{\mathfrak{D}}_{\mu}=\emptyset$. This implies that  $1=\mu(\overline{\mathfrak{D}}_{\mu}) + \mu(B(x,\ve))$, which is an  absurd, since $\mu(B(x,\ve))>0$ (recall that $\supp(\mu)=X$, given that $\mu\in C_X$).
\end{proof}

\section{Sets of ergodic measures with almost everywhere zero lower and infinity upper rates of recurrence and quantitative waiting time indicators}
\label{TVIII-IX}

This section presents the counterparts, for $\overline{R}(x)$, $\underline{R}(x)$, $\overline{R}(x,y)$ and $\underline{R}(x,y)$, of the results presented in Section~\ref{TVI-VII}. They are technically easier to prove than the previous ones.

Let, for each $x\in X$ and each $s\in\N$,
\[\overline{\gamma}(x,s):=\sup_{t>s}  \frac{\log \tau_{1/t}(x)}{\log t},\qquad\underline{\gamma}(x,s):=\inf_{t>s}  \frac{\log \tau_{1/t}(x)}{\log t}.\]

\begin{propo}
\label{Gdelta4} 
Let $(X,T)$ be a topological dynamical system and let $\alpha>0$. Then, each of the sets 
\begin{eqnarray*}
\overline{\mathcal{R}}(\alpha)&=&\{ \mu\in \M(T) \mid\mu\textrm{-}\essinf \overline{R}(x)\ge \alpha\},\\
\underline{\mathcal{R}}&=&\{ \mu\in \M(T) \mid \mu\textrm{-}\esssup \underline{R}(x)=0\}
\end{eqnarray*}
is $G_{\delta}$ subset of $\M(T)$.
\end{propo}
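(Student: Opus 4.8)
The plan is to mimic the structure of the proof of Proposition~\ref{Gdelta3}, but now working directly with the return-time functions $\tau_r(x)$ instead of the mass functions $f_{x,\ve}(\mu)$. The key point is that, unlike $f_{x,\ve}(\mu)$, the function $\tau_{1/t}(x)$ does not depend on $\mu$ at all; it is a Borel function of $x$ alone. So the only role $\mu$ plays is through the measure of the relevant sub- and super-level sets, and the $F_\sigma$-decomposition of the complement will come from a tightness/compact-exhaustion argument together with semicontinuity of $x\mapsto\tau_r(x)$.

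First I would record the two ``Claim 1''-type reductions, using Proposition~\ref{BGT} (or rather its recurrence-rate analogue, Theorem~2 in~\cite{Barreiral2001}, together with the monotonicity of $s\mapsto\overline{\gamma}(x,s)$ and $s\mapsto\underline{\gamma}(x,s)$): namely
\[
\overline{\mathcal{R}}(\alpha)=\bigcap_{s\in\N}\{\mu\in\M(T)\mid\mu\textrm{-}\essinf\overline{\gamma}(x,s)\ge\alpha\},\qquad
\underline{\mathcal{R}}=\bigcap_{k\in\N}\bigcap_{s\in\N}\{\mu\in\M(T)\mid\mu\textrm{-}\esssup\underline{\gamma}(x,s)\le 1/k\}.
\]
Since $\overline{d}_\mu(x)=\lim_s\overline{\gamma}(x,s)$ pointwise and the limit is monotone, the essential-infimum over the level sets converges correctly; the same works for $\underline{\gamma}$. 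Then, as in Proposition~\ref{Gdelta3}, it suffices to show that for each $s,k\in\N$ the sets $\{\mu\mid\mu(\{x\mid\overline{\gamma}(x,s)\le\alpha-1/k\})\ge 1/l\}$ (for $\overline{\mathcal{R}}(\alpha)$) and $\{\mu\mid\mu(\{x\mid\underline{\gamma}(x,s)\ge 1/k\})\ge 1/l\}$ (for $\underline{\mathcal{R}}$) are closed in $\M(T)$, so that the complements of $\overline{\mathcal{R}}(\alpha)$ and $\underline{\mathcal{R}}$ are $F_\sigma$.

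The semicontinuity input I would establish is: for each fixed $t>0$, the map $X\ni x\mapsto\tau_{1/t}(x)\in\N\cup\{\infty\}$ is lower semicontinuous (if $T^k x\in\overline B(x,1/t)$ fails for all $k<n$, i.e. $\rho(T^k x,x)>1/t$ for all $k<n$, then by continuity of $T$ and of $\rho$ the same strict inequalities persist on a neighborhood of $x$, so $\tau_{1/t}\ge n$ on that neighborhood). Hence $x\mapsto\log\tau_{1/t}(x)/\log t$ is lower semicontinuous (for $t>1$), so the supremum $\overline{\gamma}(x,s)=\sup_{t>s}\log\tau_{1/t}(x)/\log t$ is lower semicontinuous as a sup of l.s.c.\ functions, and the infimum $\underline{\gamma}(x,s)$ is also lower semicontinuous — wait, an inf of l.s.c.\ functions need not be l.s.c.; here I would instead observe that for fixed $s$ the sub-level set $\{x\mid\underline{\gamma}(x,s)\ge c\}=\bigcap_{t>s}\{x\mid \log\tau_{1/t}(x)/\log t\ge c\}$ is an intersection of closed sets, hence closed, which is all that is needed. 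Likewise $\{x\mid\overline{\gamma}(x,s)\le c\}$ is closed because $\overline{\gamma}(x,s)$ is l.s.c. With closedness of these level sets in hand, the closedness of the sets $\M_{s,k}(l)$ in $\M(T)$ follows by exactly the Portmanteau + tightness (compact exhaustion) argument of Proposition~\ref{Gdelta3}: given $\mu_n\to\mu$ with $\mu\notin\M_{s,k}(l)$, choose a compact $C$ inside the open complement of the relevant level set with $\mu(C)>1-1/l$, cover $\{\mu\}\times C$ by the open set $\{(\nu,x)\mid\text{level-set inequality is strict}\}$ — which is open by joint lower semicontinuity of $(\nu,x)\mapsto\overline{\gamma}(x,s)$ (trivial here since $\overline\gamma$ does not depend on $\nu$, so jointness is automatic) — extract a finite subcover, and use weak convergence $\limsup_n\mu_n(X\setminus\mathcal O)\le\mu(X\setminus\mathcal O)<1/l$ to reach a contradiction.

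The main obstacle, and the only place where genuine care is needed, is the bookkeeping at the endpoints $\alpha-1/k$ versus $\alpha$ and $1/k$ versus $0$ in the two ``Claim 1'' reductions, i.e.\ making sure the $F_\sigma$ union over $s,k,l$ exactly recovers the complement of $\overline{\mathcal{R}}(\alpha)$ (resp.\ of $\underline{\mathcal{R}}$); this is the same subtlety already handled in Proposition~\ref{Gdelta3} and the argument transfers verbatim once one notes $\overline{R}(x)=\lim_s\overline\gamma(x,s)$ and $\underline R(x)=\lim_s\underline\gamma(x,s)$ with the monotonicity noted above, together with the fact that the essential supremum of the lower recurrence rate and the essential infimum of the upper recurrence rate are controlled by the corresponding Hausdorff/packing dimensions via the Barreira--Saussol inequalities (alternatively, one can bypass these inequalities entirely and argue purely with the a.e.-defined functions $\overline R$, $\underline R$, since all that is used is that an a.e.-pointwise monotone limit of measurable functions is measurable and the level sets behave monotonically in $s$). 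A secondary, purely technical point is that $\tau_r(x)$ may equal $+\infty$ on a $\mu$-null set; this causes no problem because the conventions $\log(+\infty)/\log t=+\infty$ make $\overline\gamma(\cdot,s)$ and $\underline\gamma(\cdot,s)$ extended-real-valued measurable functions, and all the level-set manipulations above are valid in $[0,+\infty]$.
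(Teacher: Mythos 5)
Your treatment of $\overline{\mathcal{R}}(\alpha)$ is essentially the paper's argument: with the closed-ball definition, $\tau_{1/t}$ is lower semicontinuous, hence $\overline{\gamma}(\cdot,s)$ is a supremum of l.s.c.\ functions and its sub-level sets $Z_{s,l}$ are closed, and the closedness of $\{\mu\mid\mu(Z_{s,l})\ge 1/k\}$ follows. Since $Z_{s,l}$ does not depend on $\mu$, you do not even need the tightness/compact-cover machinery you import from Proposition~\ref{Gdelta3}: the portmanteau inequality $\liminf_n\mu_n(X\setminus Z_{s,l})\ge\mu(X\setminus Z_{s,l})$ applied directly to the open complement already yields the contradiction, which is exactly what the paper does (its subsequence argument with a constant return time is just your semicontinuity claim in disguise). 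Likewise, no appeal to Barreira--Saussol or to Proposition~\ref{BGT} is needed for the reduction in $s$; monotonicity of $\overline\gamma(\cdot,s)$ suffices, as you note yourself.

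The $\underline{\mathcal{R}}$ half, however, contains a genuine gap, located precisely at the step you flag and then dismiss. You assert that $\{x\mid\log\tau_{1/t}(x)/\log t\ge c\}$ is closed, so that $\{x\mid\underline{\gamma}(x,s)\ge c\}$ is closed as an intersection of closed sets. But with the closed-ball return time used throughout (and in your own semicontinuity proof), $\tau_{1/t}$ is lower semicontinuous and integer-valued, so its super-level sets $\{x\mid\tau_{1/t}(x)\ge n\}=\{x\mid\rho(T^kx,x)>1/t,\ \forall\, k<n\}$ are \emph{open} --- your argument two sentences earlier proves exactly this openness. They are not closed in general (a limit of points satisfying $\rho(T^kx_m,x_m)>1/t$ need only satisfy $\rho(T^kx,x)\ge 1/t$, which may produce an early return), so the closedness of $\{x\mid\underline\gamma(x,s)\ge 1/k\}$ fails, and with it the portmanteau argument for the sets $\{\mu\mid\mu(\{\underline\gamma(\cdot,s)\ge 1/k\})\ge 1/l\}$. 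The repair is the one licensed by the paper's footnote that open and closed balls yield the same recurrence rates: for the $\underline{\mathcal{R}}$ half, define the return time with \emph{open} balls. Then $\{x\mid\tau_{1/t}(x)\ge n\}=\bigcap_{k<n}\{x\mid\rho(T^kx,x)\ge 1/t\}$ is closed, i.e.\ $\tau_{1/t}$ is upper semicontinuous; an infimum of u.s.c.\ functions is u.s.c., so the super-level sets of $\underline\gamma(\cdot,s)$ are genuinely closed, and the rest of your bookkeeping (the monotone reduction in $s$ and the $F_\sigma$ decomposition of the complement) goes through unchanged. In short, the two halves of the proposition require opposite choices of ball --- closed for $\overline{\mathcal{R}}(\alpha)$, open for $\underline{\mathcal{R}}$ --- and the arguments are ``similar'' only after this switch.
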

\begin{proof}
  Since the arguments in both proofs are similar, we just prove the statement for $\overline{\mathcal{R}}(\alpha)$. We show that $\M(T)\setminus\overline{\mathcal{R}}(\alpha)$ is an $F_{\sigma}$ set. We begin noting that $\overline{\mathcal{R}}(\alpha)=\bigcap_{s\in\N}\{ \mu\in \M(T) \mid \mu\textrm{-}\essinf \overline{\gamma}(x,s)\ge \alpha\}$. 
  
  Let $l,s\in \N$, set $Z_{s,l}=\{x\in X\mid \overline{\gamma}(x,s)\leq\alpha-1/l\}$,  and set for each $k\in\N$,
\begin{eqnarray*}
\M_{s,l}(k)=\{\mu\in \M(T)\mid \mu(Z_{s,l})\ge 1/k\}.
\end{eqnarray*}

\textit{Claim.} $Z_{s,l}$ is  closed.

Let $(z_n)$ be a sequence in $Z_{s,l}$ such that $z_n\to z$. %We show that $z\in Z_{s,l}$. Given that,
Since, for each $n\in\N$, $\overline{\gamma}(z_n,s)=\sup_{s\ge t}  \frac{\log \tau_{1/t}(z_n)}{\log t}\leq\alpha-1/l$, it follows that for each $t\ge s$, $\tau_{1/t}(z_n)\leq t^{\alpha-1/l}$. Now, fix $t\ge s$; then, there exists a sequence $(k_n)$, $k_n\in\N$, such that for each $n\in\N$, $k_n\leq t^{\alpha-1/l}$ and $d(T^{k_n}(z_n),z_n)\leq 1/t$.

Given that $(k_n)$ is bounded, there exist a sub-sequence $(k_{n_j})$,  $k\leq t^{\alpha-1/l}$ and  $j_0\in\N$ such that for each $j\ge j_0$, $k_{n_j}=k$. Since for each $j\in\N$, $d(T^{k_{n_j}}(z_{n_j}),z_{n_j})\leq 1/t$, it follows from the continuity of $T^k$ and the previous statements that $d(T^k(z),z)\leq 1/t$, which proves that $\tau_{1/t}(z)\leq t^{\alpha-1/l}$.  Given that $t\leq s$ is arbitrary, one gets $\sup_{t\ge s}  \frac{\log \tau_{1/t}(z)}{\log t}\leq\alpha-1/l$, which concludes that $z\in Z_{s,l}$.

\

Now, we show that $\M_{s,l}(k)$ is closed. Indeed, fix $s\in\N$ and  let $(\mu_n)$ be  a sequence in $\M_{s,l}(k)$ such that $\mu_n\to \mu$. Suppose that $\mu\notin \M_{s,l}(k)$; then, $\mu(A)>1-1/k$, where  $A=X\setminus Z_{s,l}$. Since, by Claim, $A$ is open in $X$ and  $\mu_n\to \mu$, it follows that $\liminf_{n\to \infty}\mu_n(A)\geq \mu(A)>1-1/k$, which shows that there exists an $\ell\in\N$ such that $\mu_{\ell}(A)>1-1/k$. This contradicts the fact that $\mu_{\ell}\in \M_{s,l}(k)$. Hence, $\mu\in \M_{s,l}(k)$.

Given that $\M_{s,l}(k)$ is closed, it follows that $\M(T)\setminus\overline{\mathcal{R}}(\alpha)=\bigcup_{s\in\N}\bigcup_{k\in \N}\bigcup_{l\in \N}\M_{s,l}(k)=\{\mu\in\M(T)\mid \mu\textrm{-}\essinf \overline{\gamma}_\mu(x,s)< \alpha\}$ is an $F_{\sigma}$ subset of $\M(T)$.
\end{proof}

The next results show that such sets are dense for the full-shift system.    

\begin{propo}
\label{denseretzer}
Let $(X,T)$ be the full-shift system, with $M$ a Polish metric space. Then, $\underline{\mathcal{R}}=\{ \mu\in \M_e \mid \mu\textrm{-}\esssup \underline{R}(x)=0\}$ is a dense subset of $\mathcal{M}_e$.
\end{propo}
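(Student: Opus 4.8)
The plan is to reduce the denseness of $\underline{\mathcal{R}}$ to the denseness of the set of $T$-periodic measures in $\M_e$ (Theorem~3.3 in~\cite{Parthasarathy1961}), exactly as in the proof of Proposition~\ref{densehausdorff}. First I would observe that the natural candidate for approximating an arbitrary $\mu\in\M_e$ is a $T$-periodic measure $\mu_x$ supported on a periodic orbit $\mathcal{O}(x)=\{x,Tx,\dots,T^{k-1}x\}$ of period $k$, i.e. $\mu_x=\frac1k\sum_{i=0}^{k-1}\delta_{T^ix}$. So it suffices to show that every $T$-periodic measure lies in $\underline{\mathcal{R}}$, that is, $\mu_x\textrm{-}\esssup\underline{R}(y)=0$; since $\mu_x$ is supported on the finite set $\mathcal{O}(x)$, this amounts to checking that $\underline{R}(T^ix)=0$ for each $i\in\{0,\dots,k-1\}$.

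The key computation is then the following: for a periodic point $z$ of period $k$, one has $T^kz=z$, hence $d(T^kz,z)=0\le r$ for every $r>0$, so $\tau_r(z)\le k$ for all $r>0$. Therefore
\[
\underline{R}(z)=\liminf_{r\to 0}\frac{\log\tau_r(z)}{-\log r}\le\liminf_{r\to 0}\frac{\log k}{-\log r}=0,
\]
and since $\underline{R}(z)\ge 0$ always, $\underline{R}(z)=0$. Applying this to each $z=T^ix$ (which is again periodic of period $k$) shows that the set $\{y\mid\underline{R}(y)=0\}$ has full $\mu_x$-measure, i.e. $\mu_x\textrm{-}\esssup\underline{R}(y)=0$, so $\mu_x\in\underline{\mathcal{R}}$. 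Combining this with the denseness of $T$-periodic measures in $\M_e$ gives that $\underline{\mathcal{R}}$ is dense in $\M_e$, as desired.

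I do not expect a serious obstacle here: unlike the packing-dimension statements (Propositions~\ref{densepack},~\ref{central}), which force the use of measures with large entropy and the perfectness of $M$, the estimate $\tau_r(z)\le k$ is completely elementary and uses nothing about the structure of the alphabet. The only mild point to be careful about is that $\tau_r$ is defined via closed balls $\overline{B}(x,r)$, but since $d(T^kz,z)=0$ this is immaterial; and one should note that the argument in fact shows that $\underline{R}\equiv 0$ on all of the (dense) set of periodic points, so no measure-theoretic subtlety is needed beyond the trivial observation that $\mu_x$ is carried by a finite orbit. As in Proposition~\ref{densehausdorff}, this argument works verbatim for any topological dynamical system whose $T$-periodic measures are dense in $\M_e$.
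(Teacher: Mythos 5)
Your argument is correct and is essentially the paper's own proof: the paper likewise notes that for a $T$-periodic measure $\mu_x$ one has $\underline{R}(y)=0$ for every $y\in\mathcal{O}(x)$ (your computation $\tau_r(z)\le k$ is just the explicit justification of this), and then invokes the density of $T$-periodic measures in $\M_e$ from Theorem~3.3 of Parthasarathy. No gaps; your additional remarks about closed balls and generality match the paper's perspective.
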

\begin{proof}
  Note that if $\mu_x$ is a $T$-periodic measure, then for each $y\in\mathcal{O}(x)$, $R(y)=0$. The result follows, therefore, from the fact that the set of $T$-periodic measures is dense in $\M_e$ (this is Theorem~3.3 in~\cite{Parthasarathy1961}).
\end{proof}

\begin{propo}
\label{denseretpos}
Let $(X,T)$ be the full-shift system, with $M$ a perfect and compact metric space, and let $L>0$. Then, $\overline{\mathcal{R}}(L)=\{\mu\in \M_e\mid \mu\textrm{-}\essinf \overline{R}(x)\ge L\}$ is a dense subset of $\mathcal{M}_e$.
\end{propo}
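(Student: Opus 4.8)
The plan is to follow the template of Proposition~\ref{densepack}, replacing Lemma~\ref{dimpos01} by its counterpart for the upper recurrence rate. Fix $L>0$, $\mu\in\M_e$, and a basic weak neighborhood $V_\mu(f_1,\dots,f_r;\delta)$ of $\mu$ in $\M_e$. By Proposition~\ref{central} there exists $\zeta\in V_\mu(f_1,\dots,f_r;\delta)\cap\M_e$ with $h_\zeta(T)\ge L\log 2$. It therefore suffices to prove the following recurrence analogue of Lemma~\ref{dimpos01}: for the full-shift over any Polish $M$ and any $\zeta\in\M_e$, one has $\overline{R}(x)\ge h_\zeta(T)/\log 2$ for $\zeta$-a.e.\ $x$. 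Granting this, $\zeta\textrm{-}\essinf\overline{R}(x)\ge L$, so $\zeta\in\overline{\mathcal{R}}(L)$, and density follows.

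To prove the analogue I would reuse the bi-Lipschitz inclusion already used in the proof of Lemma~\ref{dimpos}. Since $T$ is $2$-Lipschitz, any $y\in\overline{B}(x,\varepsilon 2^{-n})$ satisfies $d(T^iy,T^ix)\le 2^{i-n}\varepsilon\le\varepsilon/2<\varepsilon$ for every $0\le i\le n-1$, whence $\overline{B}(x,\varepsilon 2^{-n})\subseteq B(x,n,\varepsilon)$, the $n$-th dynamical ball. Consequently the return time to the metric ball dominates the one to the dynamical ball: writing $R^{\varepsilon}_n(x):=\inf\{k\ge 1\mid T^kx\in B(x,n,\varepsilon)\}$ (finite for $\zeta$-a.e.\ $x$ by Poincar\'e recurrence, since $\zeta(B(x,n,\varepsilon))>0$ for $x\in\supp\zeta$), one has $\tau_{\varepsilon 2^{-n}}(x)\ge R^{\varepsilon}_n(x)$, and hence, as $\varepsilon 2^{-n}\to 0$,
\[
\overline{R}(x)\ \ge\ \limsup_{n\to\infty}\frac{\log\tau_{\varepsilon 2^{-n}}(x)}{-\log(\varepsilon 2^{-n})}\ \ge\ \limsup_{n\to\infty}\frac{\log R^{\varepsilon}_n(x)}{n\log 2-\log\varepsilon}.
\]
The remaining input is an Ornstein--Weiss-type estimate: for $\zeta$-a.e.\ $x$ and each small $\varepsilon>0$, $\liminf_{n}\tfrac1n\log R^{\varepsilon}_n(x)\ge h_\zeta(T)-g(\varepsilon)$ with $g(\varepsilon)\to 0$ as $\varepsilon\to 0$; feeding this into the display and letting $\varepsilon\to 0$ along a sequence yields $\overline{R}(x)\ge h_\zeta(T)/\log 2$ for $\zeta$-a.e.\ $x$ (note the constant $1/\log 2$ is not claimed optimal, which is irrelevant since Proposition~\ref{central} makes $h_\zeta(T)$ as large as we wish).

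For the Ornstein--Weiss estimate I would use the product structure of $X$. Because $(T^ix)_0=x_i$ and the metric~\eqref{metric} weights the $j$-th coordinate by $2^{-|j|}$, the Bowen ball $B(x,n,\varepsilon)$ is sandwiched, up to an $O_\varepsilon(1)$ change of window length, between cylinders built from a fixed finite (or countable, finite-entropy) Borel partition of $M$ into small-diameter sets; moreover the high-entropy approximants $\zeta$ furnished by Proposition~\ref{central} may be taken supported on a subshift over a finite subset $\{a_1,\dots,a_m\}\subset M$ of pairwise distinct letters (this is how Sigmund's construction produces them), so that for $\varepsilon$ small $B(x,n,\varepsilon)\cap\supp\zeta$ \emph{is} such a cylinder. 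Thus $R^{\varepsilon}_n(x)$ agrees, up to the $O_\varepsilon(1)$ shift in length, with a symbolic first-return time of a length-$(n+O_\varepsilon(1))$ cylinder, to which the classical (two-sided, finite-alphabet) Ornstein--Weiss return-time theorem applies, giving $\tfrac1n\log R^{\varepsilon}_n(x)\to h_\zeta(T)-g(\varepsilon)$ $\zeta$-a.e.; dividing by $n$ absorbs the shift. As a sanity check/alternative, since such $\zeta$ is Bernoulli (hence exponentially mixing with respect to Lipschitz observables), Saussol's equality $\overline{R}(x)=\overline{d}_\zeta(x)$ together with Lemma~\ref{dimpos01} and $\overline{d}_\zeta=\dim_P(\zeta)$ $\zeta$-a.e.\ yields the same bound; and for a uniform Bernoulli measure on $m$ letters an elementary first-moment/Borel--Cantelli count of the sets $\{x\mid T^kx\ \text{lies in the $n$-cylinder of }x,\ 1\le k< m^{n}e^{-n\eta}\}$ shows their limsup-complement has measure $\ge 1-\tfrac1{m-1}$, and ergodicity (via the $T$-invariance of $\overline R$, proved exactly as in Proposition~\ref{uniforminvariance} using that $T^{\pm1}$ are Lipschitz, so $\overline R$ is $\zeta$-a.e.\ constant) promotes this to a full-measure statement with the bound $\overline R(x)\ge (\log m-\eta)/\log 2$.

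The main obstacle is precisely this passage from metric recurrence to symbolic recurrence: over an uncountable alphabet there is no finite generating partition, so one must either restrict attention to the finitely-supported approximants $\zeta$ (where the comparison between metric balls, Bowen balls and cylinders is immediate) or run the argument with countable finite-entropy partitions while simultaneously controlling the partition mesh, the Bowen-ball radius, and $\varepsilon$, sending them to $0$ in the right order. Everything else is routine and largely already in place: the reduction through Proposition~\ref{central} is identical to that in Proposition~\ref{densepack}, the bi-Lipschitz inclusion is the one used for Lemma~\ref{dimpos}, and the final estimate $\overline{R}(x)\ge\limsup_n \log R^{\varepsilon}_n(x)/(n\log 2)$ is one line of bookkeeping.
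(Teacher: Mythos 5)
Your overall skeleton coincides with the paper's: reduce via Proposition~\ref{central} to an ergodic $\zeta$ in the given weak neighborhood with $h_\zeta(T)\ge L\log 2$, use the Lipschitz inclusion $B(x,\ve 2^{-n})\subset B(x,n,\ve)$ to get $\tau_{\ve 2^{-n}}(x)\ge R_n(x,\ve)$, and then convert an entropy lower bound on return times to dynamical balls into the bound $\overline{R}(x)\ge h_\zeta(T)/\log 2\ge L$ $\zeta$-a.e. But there is a genuine gap at exactly the point you flag as ``the main obstacle'': the Ornstein--Weiss-type estimate $\liminf_n\frac1n\log R_n(x,\ve)\ge h_\zeta(T)-g(\ve)$ is the crux of the proof and is not established. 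The paper closes this step simply by invoking Theorem~A and Proposition~A of \cite{Varandas} (together with Remark~\ref{Rdenseretpos} and \cite{Riquelme} for the non-compact Polish case), which hold for \emph{any} ergodic measure of the system, require no symbolic structure whatsoever, and in fact give the stronger conclusion $\underline{R}(x)\ge h_\zeta(T)/\log 2$ for $\zeta$-a.e.\ $x$, from which $\overline{R}(x)\ge L$ is immediate.

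Your proposed substitute for that estimate rests on an unjustified structural claim: that the high-entropy approximants furnished by Proposition~\ref{central} ``may be taken supported on a subshift over a finite subset $\{a_1,\dots,a_m\}\subset M$'' (and, in your alternative route, that they are Bernoulli, so that Saussol's equality applies). Proposition~\ref{central} asserts only density of $\{\mu\in\M_e\mid h_\mu(T)\ge L\}$; neither its statement nor the paper's sketch (an extension of Theorem~2 of \cite{Sigmund1971}, whose details are left to the reader) provides finite-alphabet support, separation of the letters, or Bernoullicity, so using these properties requires reworking Sigmund's construction and proving that the modified measures are still dense in $\M_e$ with large entropy --- work that is not routine bookkeeping and is precisely what your argument defers. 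Once such finite-alphabet, separated-letter approximants are granted, your cylinder/Bowen-ball comparison and the classical two-sided Ornstein--Weiss theorem would indeed close the argument; but as written the proof is incomplete, and the economical fix is the one the paper takes: cite the general entropy--recurrence inequality of \cite{Varandas}, which needs no assumption on the approximants beyond ergodicity.
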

\begin{proof}
  Fix $x\in X$,  $n\geq1$ and $\ve>0$. It follows from the argument presented in the proof of Lemma~\ref{dimpos01} that $B(x,\ve 2^{-n})\subset B(x,n,\ve)$ (recall that the full-shift system is Lipschitz continuous). Note that $\tau_{\ve 2^{-n}}(x)\geq R_n(x,\ve)$, where $R_n(x,\ve)=\inf\{k\geq 1\mid T^k(x)\in B(x,n,\ve)\}$ is the $n$th return time to the dynamical ball $B(x,n,\ve)$. Now, as in the proof of Proposition \ref{densepack}, for each $\mu\in\M(T)$ and each neighborhood $V_\mu(f_1,\ldots,f_n;\delta)$ (in the induced topology), there exists a measure $\zeta\in V_\mu(f_1,\ldots,f_n;\delta)\cap\M_e$ such that $h_{\zeta}(T)\geq L\log 2$. The result is now a consequence of Theorem A and Proposition A in \cite{Varandas}, which state that $\underline{R}(x)\geq \frac{h_{\zeta}(T)}{\log 2}=L$, for $\zeta$-a.e.$\,x$.
\end{proof}

\begin{rek}
\label{Rdenseretpos}
Proposition~\ref{denseretpos} can be extended to the case where $M$ is a Polish metric space using an adapted version of Lemma~\ref{dimpos01}. Namely, let $\mu\in\M_e$. It follows from the proof of Theorem~A in~\cite{Varandas} that $\underline{h}(T,x):=\lim_{\ve\to 0}\liminf_{n\to\infty}\frac{1}{n}\log R_n(x,\ve)$ is $T$-invariant (and therefore, constant for $\mu$-a.e.$\,x$; this constant may be infinite), where $R_n(x,\ve):=\inf\{k\in\N\mid f^k(x)\in B(x,n,\ve)\}$, and that $h_\mu(T)\le\underline{h}(T,x)$ for $\mu$-a.e.$\,x$; as in the proof of Katok's Theorem, this inequality is also valid for Polish spaces (see the discussion preceding Theorem~2.6 in~\cite{Riquelme}).
  
Since $R(x)$ is also $T$-invariant, it follows from the previous discussion and from the argument presented in the proof of Proposition A in \cite{Varandas} that, for $\mu$-a.e.$\;x\in X$,
\[\underline{R}(x)=\int \underline{R}(x)d\mu(x)\ge \int \frac{\underline{h}_{\mu}(T,x)}{\log 2}d\mu(x)\ge \frac{h_\mu(T)}{\log 2}.\] 
\end{rek}

\begin{proof3}
\begin{itemize}
\item[\textbf{V.}] The result is a consequence of Propositions \ref{Gdelta4},~\ref{denseretzer} and item~I of Theorem~\ref{teocentral3}. 
\item[\textbf{VI.}] It follows from Proposition~\ref{Gdelta4}, Remark~\ref{Rdenseretpos} and item~I of Theorem~\ref{teocentral3}, since $\overline{\mathcal{R}}=\bigcap_{L\ge 1}\overline{\mathcal{R}}(L)$.
\end{itemize}
\end{proof3}

\begin{rek} There is an alternative proof to the fact that $\underline{\mathcal{R}}=\{ \mu\in \M(T)\mid \mu\textrm{-}\esssup \underline{R}(x)=0\}$ is residual in $\mathcal{M}(T)$. In fact, this result can be seen as a direct consequence of Theorem 2 in \cite{Barreiral2001} and item~III of Theorem \ref{teocentral3}, since it follows that, for each $\mu\in HD$, $\mu\textrm{-}\esssup\underline{R}(x)\leq \dim_H(\mu)=0$.
\end{rek}

The following result states that each typical measure obtained in Theorem \ref{teocentral3} is supported on the dense $G_{\delta}$ set $\mathfrak{R}=\{x\in X\mid \underline{R}(x)=0 \mbox{ and } \overline{R}(x)=\infty\}$. 

\begin{propo}
\label{genericpoints}
Let $(X,T)$ the full-shift system, where $M$ is a perfect Polish metric space. Then, each of the sets $\mathfrak{R}^{-}=\{x\in X\mid \overline{R}(x)=\infty\}$ and $\mathfrak{R}_{-}=\{x\in X\mid  \underline{R}(x)=0\}$ is a dense $G_{\delta}$ subset of $X$. Moreover, for each $\mu\in\mathcal{\overline{R}}\cap\mathcal{\underline{R}}\cap C_X$, $\mu(\mathfrak{R}^{-}\cap\mathfrak{R}_{-})=1$.
\end{propo}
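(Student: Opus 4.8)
The plan is to establish the $G_{\delta}$ assertion and the density assertion for $\mathfrak{R}^{-}$ and $\mathfrak{R}_{-}$ separately, and then to read off the last statement directly from the definitions of the sets $\underline{\mathcal{R}}$ and $\overline{\mathcal{R}}$ of items V and VI of Theorem~\ref{teocentral3}. I would use throughout the elementary identities $\overline{R}(x)=\lim_{s\to\infty}\overline{\gamma}(x,s)$ and $\underline{R}(x)=\lim_{s\to\infty}\underline{\gamma}(x,s)$, with $s\mapsto\overline{\gamma}(x,s)$ non-increasing and $s\mapsto\underline{\gamma}(x,s)$ non-decreasing (both taking values in $[0,+\infty]$); hence $\overline{R}(x)=+\infty$ iff $\overline{\gamma}(x,s)=+\infty$ for every $s\in\N$, and $\underline{R}(x)=0$ iff $\underline{\gamma}(x,s)=0$ for every $s\in\N$. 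This yields
\[\mathfrak{R}^{-}=\bigcap_{n\in\N}\bigcap_{s\in\N}\{x\in X\mid\overline{\gamma}(x,s)>n\}\qquad\text{and}\qquad \mathfrak{R}_{-}=\bigcap_{m\in\N}\bigcap_{s\in\N}\{x\in X\mid\underline{\gamma}(x,s)<1/m\}.\]

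For $\mathfrak{R}^{-}$ I would argue that each set $\{x\mid\overline{\gamma}(x,s)\le n\}$ is closed by exactly the reasoning of the \emph{Claim} in the proof of Proposition~\ref{Gdelta4} (the constant $\alpha-1/l$ there is immaterial): if $z_{j}\to z$ with $\overline{\gamma}(z_{j},s)\le n$, then for each fixed $t>s$ the return times $\tau_{1/t}(z_{j})\le t^{n}$ are bounded, so along a subsequence they take a common value $k$ with $d(T^{k}z_{j},z_{j})\le 1/t$, and the continuity of $T^{k}$ gives $d(T^{k}z,z)\le 1/t$, i.e.\ $\tau_{1/t}(z)\le t^{n}$; letting $t$ vary yields $\overline{\gamma}(z,s)\le n$. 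Hence $\mathfrak{R}^{-}$ is $G_{\delta}$. For $\mathfrak{R}_{-}$ the same scheme applies, with one caveat: the return time $\tau_{1/t}(\cdot)$ defined via \emph{closed} balls is only lower semicontinuous, so an infimum over $t$ of the quotients need not be upper semicontinuous. I would bypass this by computing $\underline{\gamma}$ with \emph{open} balls — admissible since $\underline{R}(x)$ and $\overline{R}(x)$ are insensitive to this change (the footnote following the definition of $\tau_{r}(x)$) — and using that $x\mapsto\tau^{\mathrm{op}}_{1/t}(x)$ is upper semicontinuous (it is $\N\cup\{\infty\}$-valued and $\{x\mid\tau^{\mathrm{op}}_{1/t}(x)<K\}=\bigcup_{k<K}\{x\mid d(T^{k}x,x)<1/t\}$ is open), so that $x\mapsto\underline{\gamma}(x,s)=\inf_{t>s}\frac{\log\tau^{\mathrm{op}}_{1/t}(x)}{\log t}$ is upper semicontinuous and each $\{x\mid\underline{\gamma}(x,s)<1/m\}$ is open; hence $\mathfrak{R}_{-}$ is $G_{\delta}$.

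The density of $\mathfrak{R}^{-}$ and $\mathfrak{R}_{-}$ I would obtain just as in the density part of Proposition~\ref{localgenericpoints}. By items II and VI of Theorem~\ref{teocentral3}, $\overline{\mathcal{R}}\cap C_{X}$ is a dense $G_{\delta}$ (in particular nonempty) subset of $\M(T)$; fix any $\mu$ in it. By definition of $\overline{\mathcal{R}}$ one has $\mu(\mathfrak{R}^{-})=1$, and $\supp(\mu)=X$ because $\mu\in C_{X}$. If $\mathfrak{R}^{-}$ were not dense, some nonempty ball $B(x_{0},\varepsilon)$ would be contained in $X\setminus\mathfrak{R}^{-}$, whence $\mu(X\setminus\mathfrak{R}^{-})\ge\mu(B(x_{0},\varepsilon))>0$, contradicting $\mu(\mathfrak{R}^{-})=1$. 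The same argument applied to some $\mu\in\underline{\mathcal{R}}\cap C_{X}$ (items II and V) shows $\mathfrak{R}_{-}$ is dense. For the last claim, pick any $\mu\in\overline{\mathcal{R}}\cap\underline{\mathcal{R}}\cap C_{X}$ (nonempty by Baire, since all three are dense $G_{\delta}$ subsets of $\M(T)$); by the definitions of $\overline{\mathcal{R}}$ and $\underline{\mathcal{R}}$, $\mu(\mathfrak{R}^{-})=\mu(\mathfrak{R}_{-})=1$, so $\mu(\mathfrak{R}^{-}\cap\mathfrak{R}_{-})=1$.

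The only genuinely technical point is the $G_{\delta}$ claim, and within it the only ingredient not already supplied by Propositions~\ref{Gdelta4} and~\ref{localgenericpoints} is the open-ball reformulation that makes $x\mapsto\underline{\gamma}(x,s)$ upper semicontinuous; the rest is bookkeeping together with items II, V and VI of Theorem~\ref{teocentral3}.
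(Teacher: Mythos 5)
Your proof is correct and follows essentially the same route as the paper: the $G_{\delta}$ part rests on the closedness of the sub-level sets $\{x\mid\overline{\gamma}(x,s)\le n\}$ exactly as in the Claim of Proposition~\ref{Gdelta4}, and density is obtained by taking a full-support measure in $\overline{\mathcal{R}}\cap C_X$ (resp.\ $\underline{\mathcal{R}}\cap C_X$), just as in Proposition~\ref{localgenericpoints}. The paper writes out only the case of $\mathfrak{R}^{-}$ and declares $\mathfrak{R}_{-}$ analogous; your open-ball reformulation, which makes $x\mapsto\tau^{\mathrm{op}}_{1/t}(x)$ and hence $x\mapsto\underline{\gamma}(x,s)$ upper semicontinuous, is a legitimate and worthwhile way to make that omitted half precise, since with closed balls the super-level sets of $\underline{\gamma}(\cdot,s)$ are not obviously closed.
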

\begin{proof} We just present the proof that $\mathfrak{R}^{-}$ is a dense $G_{\delta}$ subset of $X$. 

  \emph{$\mathfrak{R}^{-}$ is a $G_{\delta}$ set in X.} Let  $\alpha>0$, $s\in \N$, and set $Z_{s}(\alpha)=\{x\in X\mid \overline{\gamma}(x,s)\le\alpha\}$. Following the proof of Claim in Proposition \ref{Gdelta4}, it is clear that $Z_{s}(\alpha)$ is  closed. Thus, taking $\alpha=n\in\N$, it follows that $\mathfrak{R}^{-}=\bigcap_{n\in \N}\bigcap_{s\in \N} (X\setminus Z_s(n))$ is a $G_{\delta}$ set in $X$.

\emph{$\mathfrak{R}^{-}$ is dense in $X$}. Let $\mu\in\mathcal{\overline{R}}\cap C_X$. Then, $\mu(\mathfrak{R}^{-})=1$. Suppose that $\mathfrak{R}^{-}$  is not dense; so, there exist $x\in X$ and $\ve>0$ such that $B(x,\ve)\cap\mathfrak{R}^{-}=\emptyset$. This implies that $1=\mu(\mathfrak{R}^{-}) + \mu(B(x,\ve))$, which is an  absurd, since $\mu(B(x,\ve))>0$.
\end{proof}

Now, we present equivalent results, to those already obtained in this section, for the quantitative waiting time indicators.
%\begin{propo}
%\label{uniformwaitingtime}
%Let $\mu\in \M_e$.  Then, $\overline{R}(x,y)$ and $\underline{R}(x,y)$ are $\mu\time \mu$-a.e.~constants in $X\times X$.
%\end{propo}
%\begin{proof}
%The proof follows the same ideas presented in the proof of Proposition~\ref{uniformrecur}.
%\end{proof}

Let, for each $(x,y)\in X\times X$ and each $s\in\N$,
\[\overline{\gamma}(x,y,s):=\sup_{t>s}  \frac{\log \tau_{1/t}(x,y)}{\log t},\qquad\underline{\gamma}(x,y,s):=\inf_{t>s}  \frac{\log \tau_{1/t}(x,y)}{\log t}.\] %,  are invariant by $T$.  

\begin{propo}
\label{Gdelta5} 
Let $(X,T)$ be a topological dynamical system and let $\alpha>0$. Then, the set 
\begin{eqnarray*}
\underline{\mathscr{R}}(\alpha)&=&\{ \mu\in \M(T) \mid (\mu\times \mu)\textrm{-}\esssup \underline{R}(x,y)\leq \alpha\}
\end{eqnarray*}
is a $G_{\delta}$ subset of $\M(T)$.
\end{propo}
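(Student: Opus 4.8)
The plan is to follow the scheme of the proof of Proposition~\ref{Gdelta4}, showing that $\M(T)\setminus\underline{\mathscr{R}}(\alpha)$ is an $F_\sigma$ subset of $\M(T)$, with two modifications. First, since $\underline{R}(x,y)$ is built from an \emph{infimum} over scales, the relevant entrance-time functions must be \emph{upper} semicontinuous (rather than lower semicontinuous, as in the proof of Proposition~\ref{Gdelta4}); to arrange this I would first note that $\underline{R}(x,y)$ is unchanged if the closed balls $\overline{B}(y,r)$ are replaced by the open balls $B(y,r)$ in the definition of $\tau_r(x,y)$ --- the argument is the one already used for the recurrence rates (the footnote attached to the definition of $\underline{R}(x)$), based on the inclusions $\tau^{\mathrm{cl}}_r(x,y)\le\tau^{\mathrm{op}}_r(x,y)\le\tau^{\mathrm{cl}}_{r/2}(x,y)$ together with $\lim_{r\to 0}\log(r/2)/\log r=1$. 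Thus throughout the proof I would work with $\tau_r(x,y)=\inf\{n\in\N\mid T^n x\in B(y,r)\}$, and with $\underline{\gamma}(x,y,s)=\inf_{t>s}(\log\tau_{1/t}(x,y))/\log t$ defined accordingly. Second, since $\underline{\mathscr{R}}(\alpha)$ is phrased in terms of $\mu\times\mu$, I would use the standard fact that $\mu_n\to\mu$ weakly in $\M(T)$ implies $\mu_n\times\mu_n\to\mu\times\mu$ weakly in $\M(X\times X)$.

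With these conventions, recording that $\underline{R}(x,y)=\sup_{s\in\N}\underline{\gamma}(x,y,s)$ and that $s\mapsto\underline{\gamma}(x,y,s)$ is non-decreasing, one gets that $(\mu\times\mu)\textrm{-}\esssup\underline{R}(x,y)\le\alpha$ if and only if, for every $s\in\N$, $(\mu\times\mu)(\{(x,y)\mid\underline{\gamma}(x,y,s)>\alpha\})=0$. Setting, for $s,l\in\N$, $Z_{s,l}=\{(x,y)\in X\times X\mid\underline{\gamma}(x,y,s)\ge\alpha+1/l\}$, one has $\{(x,y)\mid\underline{\gamma}(x,y,s)>\alpha\}=\bigcup_{l\in\N}Z_{s,l}$, whence
\[\M(T)\setminus\underline{\mathscr{R}}(\alpha)=\bigcup_{s\in\N}\bigcup_{l\in\N}\bigcup_{k\in\N}\{\mu\in\M(T)\mid(\mu\times\mu)(Z_{s,l})\ge 1/k\}.\]
It then suffices to prove that each $\M_{s,l}(k):=\{\mu\in\M(T)\mid(\mu\times\mu)(Z_{s,l})\ge 1/k\}$ is closed.

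The heart of the argument --- and the point I expect to be the main obstacle --- is to show that each $Z_{s,l}$ is closed in $X\times X$. Because we now use open balls, for fixed $r>0$ and $N\in\N$ one has $\{(x,y)\mid\tau_r(x,y)\ge N\}=\bigcap_{1\le k\le N-1}\{(x,y)\mid d(T^k x,y)\ge r\}$, a finite intersection of closed sets (here continuity of $T$ is used), so the $\N\cup\{\infty\}$-valued map $(x,y)\mapsto\tau_r(x,y)$ is upper semicontinuous; hence, for each fixed $t>1$, $(x,y)\mapsto(\log\tau_{1/t}(x,y))/\log t$ is upper semicontinuous, and since an infimum of an arbitrary family of upper semicontinuous functions is upper semicontinuous, so is $(x,y)\mapsto\underline{\gamma}(x,y,s)$; therefore $Z_{s,l}=\{\underline{\gamma}(\cdot,\cdot,s)\ge\alpha+1/l\}$ is closed. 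Finally, with $Z_{s,l}$ closed, the Portmanteau theorem applied to $\mu_n\times\mu_n\to\mu\times\mu$ yields $\limsup_n(\mu_n\times\mu_n)(Z_{s,l})\le(\mu\times\mu)(Z_{s,l})$, i.e.\ $\mu\mapsto(\mu\times\mu)(Z_{s,l})$ is upper semicontinuous, so $\M_{s,l}(k)$ is closed; consequently $\M(T)\setminus\underline{\mathscr{R}}(\alpha)$ is $F_\sigma$ and $\underline{\mathscr{R}}(\alpha)$ is $G_\delta$. The one genuine subtlety, as indicated above, is the semicontinuity direction: with the closed-ball definition $\tau_r(\cdot,\cdot)$ is only lower semicontinuous, which would leave $\underline{\gamma}(\cdot,\cdot,s)$ merely Borel and $Z_{s,l}$ not obviously closed, so the (harmless) passage to open balls is exactly what makes the scheme of Proposition~\ref{Gdelta4} go through here.
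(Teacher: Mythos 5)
Your proof is correct and follows essentially the same route as the paper's: the same decomposition $\M(T)\setminus\underline{\mathscr{R}}(\alpha)=\bigcup_{s,l,k}\M_{s,l}(k)$ with $Z_{s,l}=\{(x,y)\mid\underline{\gamma}(x,y,s)\ge\alpha+1/l\}$, closedness of $Z_{s,l}$, and closedness of each $\M_{s,l}(k)$ via weak convergence of the product measures $\mu_n\times\mu_n\to\mu\times\mu$ (which the paper gets from Theorem~8.4.10 in Bogachev). The one place where you go beyond the paper's text is the passage to open balls so that $\tau_r(\cdot,\cdot)$, and hence $\underline{\gamma}(\cdot,\cdot,s)$, is upper semicontinuous: the paper only says ``follow the same arguments as in Proposition~\ref{Gdelta4}'', and since the closed-ball entrance time is merely lower semicontinuous (strict inequalities do not pass to limits), your harmless switch of ball type --- justified because the rates $\underline{R}(x,y)$ coincide for the two definitions --- is exactly the detail needed to make $Z_{s,l}$ closed, so it is a welcome clarification rather than a deviation.
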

\begin{proof}
  Using the same ideas presented in the proof of the Proposition~\ref{Gdelta4}, we show that $\underline{\mathscr{R}}(\alpha)$ is a $G_\delta$ subset of $\M(T)$ by showing that $\M(T)\setminus \underline{\mathscr{R}}(\alpha)=\bigcup_{s\in\N}\{ \mu\in \M(T) \mid (\mu\times \mu)\textrm{-}\esssup \underline{\gamma}_\mu(x,y,s)>\alpha\}$ is an $F_{\sigma}$ set.

  Let %$\mu\in\M(T)\setminus \underline{\mathscr{R}}(\alpha)$, let
  $l,s\in\N$, set $Z_{s,l}=\{(x,y)\in X\times X\mid \underline{\gamma}(x,y,s)\geq \alpha+1/l\}$ and set, for each $k\in\N$, 
\begin{eqnarray*}
\M_{s,l}(k)=\{\mu\in\M(T)\mid (\mu\times \mu)(Z_{s,l})\ge 1/k\}.
\end{eqnarray*}

The proofs that $Z_{s,l}$ and $\M_{s,l}(k)$ are closed sets follow the same arguments presented in the proof of Proposition~\ref{Gdelta4} (here, one uses Theorem~8.4.10 in~\cite{Bogachev} for the product measure $\mu\times\mu$).
Finally, since $\M(T)\setminus\underline{\mathscr{R}}(\alpha)=\bigcup_{s\in\N}\bigcup_{l\in\N}\bigcup_{k\in\N} \M_{s,l}(k)$, we are done. % is similar to the argument presented in the proof of Lemma \ref{equalitysets}, and therefore, it is omitted (naturally, Proposition~\ref{uniformwaitingtime} is used in this step).
\end{proof}

\begin{proof3}
\begin{itemize}
\item[\textbf{VII.}] Since, by Proposition~\ref{Gdelta5}, $\underline{\mathscr{R}}=\{\mu\in \M_e\mid (\mu\times \mu)\textrm{-}\esssup\underline{R}(x,y)=0\}=\cap_{k\ge 1}\underline{\mathscr{R}}(1/k)$, one just needs to prove that $\underline{\mathscr{R}}$ is dense.  Let $\mu_z$ be a $T$-periodic measure. Then, for each $x,y\in\mathcal{O}(z)$, $\underline{R}(x,y)=0$. The result follows from the fact that the set of $T$-periodic measures is dense in $\M_e$ and from item~I in Theorem~\ref{teocentral3}.%Since, by Proposition~\ref{uniformwaitingtime},  $\underline{\mathscr{R}}=\{\mu\in \M_e\mid (\mu\times \mu)\textrm{-}\esssup\underline{R}(x,y)=0\}\supset\cap_{L\ge 1}\underline{\mathscr{U}}_{1/L}$, it follows from Proposition~\ref{Gdelta5} that one just needs to prove that $\cap_{L\ge 1}\underline{\mathscr{U}}_{1/L}$ is dense.
%
 %  Let $\mu_z$ be a $T$-periodic measure; denote its period by $k$. Let $t_0:=\max\{K^L,[1/\ve_0]\}$, where $\ve_0:=\min_{0\leq i\neq j\leq k_z-1}\{d(z_i,z_j)\mid z_l:=T^{l}(z),\, l=0,\ldots, k_z-1\}$; it follows that, for each $i,j=0,\cdots,k-1$ and each $t>t_0$, %one has
%\[\frac{\log\tau_{1/t}(T^i(z),T^j(z),1/t)}{\log t}\le \frac{\log k}{\log t}\le\frac{1}{L}.\]
%
%Note that if $\mu_x$ is a $T$-periodic measure, then for each $y\in\mathcal{O}(x)$, $R(y)=0$.
%The result follows, therefore, from the fact that the set of $T$-periodic measures is dense in $\M_e$.
  
\item[\textbf{VIII.}] The result is a direct consequence of Theorem~\ref{teocentral3} (IV) and the second inequality in~\eqref{Gala} (see Theorem~4 in \cite{Galatolo}). 
\end{itemize}
\end{proof3}

\

\begin{propo}
\label{genericpoints1}
Let $(X,T)$ the full-shift system, where $M$ is a perfect Polish metric space. Then, each of the sets $\mathfrak{S}^{-}=\{(x,y)\in X\times X\mid \overline{R}(x,y)=\infty\}$ and $\mathfrak{S}_{-}=\{(x,y)\in X\times X\mid  \underline{R}(x,y)=0\}$ is a dense $G_{\delta}$ subset of $X\times X$. Moreover, for each $\mu\in\overline{\mathscr{R}}\cap\underline{\mathscr{R}}\cap C_X$, $(\mu\times\mu)(\mathfrak{S}^{-}\cap\mathfrak{S}_{-})=1$.
\end{propo}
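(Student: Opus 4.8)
The plan is to adapt, essentially verbatim, the proof of Proposition~\ref{genericpoints} to the product space $X\times X$, replacing $\overline{\gamma}(x,s)$ and $\underline{\gamma}(x,s)$ by $\overline{\gamma}(x,y,s)$ and $\underline{\gamma}(x,y,s)$, and $\mu$ by $\mu\times\mu$. I will spell out the argument for $\mathfrak{S}^{-}$; the one for $\mathfrak{S}_{-}$ is analogous, interchanging suprema and infima and the target values $\infty$ and $0$ (with a small caveat noted at the end).

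First, the $G_\delta$ part. Writing $r=1/t$, one has $\overline{R}(x,y)=\lim_{s\to\infty}\overline{\gamma}(x,y,s)=\inf_{s\in\N}\overline{\gamma}(x,y,s)$, since $s\mapsto\overline{\gamma}(x,y,s)$ is non-increasing and a non-increasing sequence tends to $+\infty$ only if it is constantly $+\infty$; hence
\[
\mathfrak{S}^{-}=\bigcap_{n\in\N}\bigcap_{s\in\N}\bigl\{(x,y)\in X\times X\mid \overline{\gamma}(x,y,s)>n\bigr\}.
\]
Thus it suffices to show that, for each $\alpha>0$ and each $s\in\N$, the set $Z_s(\alpha)=\{(x,y)\in X\times X\mid\overline{\gamma}(x,y,s)\le\alpha\}$ is closed; this is exactly the computation in the Claim of Proposition~\ref{Gdelta4} (see also Proposition~\ref{Gdelta5}), now carried out on $X\times X$. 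Concretely, if $(x_m,y_m)\to(x,y)$ with $(x_m,y_m)\in Z_s(\alpha)$ for all $m$, then for each fixed $t>s$ one has $\tau_{1/t}(x_m,y_m)\le t^{\alpha}$ (in particular finite, which is where $\overline{\gamma}\le\alpha<\infty$ is used), so there are $k_m\in\N$ with $k_m\le t^{\alpha}$ and $d(T^{k_m}(x_m),y_m)\le 1/t$; extracting from the bounded sequence $(k_m)$ a subsequence constantly equal to some $k\le t^{\alpha}$ and using the continuity of $T^{k}$ together with $(x_m,y_m)\to(x,y)$ yields $d(T^{k}(x),y)\le 1/t$, i.e.\ $\tau_{1/t}(x,y)\le t^{\alpha}$; since $t>s$ was arbitrary, $\overline{\gamma}(x,y,s)\le\alpha$, so $(x,y)\in Z_s(\alpha)$. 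Therefore $\mathfrak{S}^{-}=\bigcap_{n}\bigcap_{s}\bigl((X\times X)\setminus Z_s(n)\bigr)$ is $G_\delta$.

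Next, density and the final measure statement. By items~II and~VIII of Theorem~\ref{teocentral3} together with the Baire property of $\M(T)$, the set $\overline{\mathscr{R}}\cap C_X$ is residual, hence nonempty; fix $\mu$ in it. The definition of $\overline{\mathscr{R}}$ gives $(\mu\times\mu)(\mathfrak{S}^{-})=1$. If $\mathfrak{S}^{-}$ were not dense, its complement would contain a nonempty open box $U\times V$ with $U,V$ open and nonempty in $X$; since $\supp(\mu)=X$, this would force $(\mu\times\mu)(U\times V)=\mu(U)\,\mu(V)>0$, contradicting $(\mu\times\mu)\bigl((X\times X)\setminus\mathfrak{S}^{-}\bigr)=0$. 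For $\mathfrak{S}_{-}$ one argues in the same way, using $\underline{\mathscr{R}}\cap C_X$ in place of $\overline{\mathscr{R}}\cap C_X$. Finally, for any $\mu\in\overline{\mathscr{R}}\cap\underline{\mathscr{R}}\cap C_X$ (again residual, hence nonempty) the definitions of $\overline{\mathscr{R}}$ and $\underline{\mathscr{R}}$ give $(\mu\times\mu)(\mathfrak{S}^{-})=(\mu\times\mu)(\mathfrak{S}_{-})=1$, whence $(\mu\times\mu)(\mathfrak{S}^{-}\cap\mathfrak{S}_{-})=1$.

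I do not anticipate a genuine obstacle. The one point that deserves care is the $\mathfrak{S}_{-}$ case: there one must check that $\{(x,y)\mid\underline{\gamma}(x,y,s)\ge\alpha\}$ is closed, and in the limiting step one only recovers the weak inequality $d(T^{k}(x),y)\ge 1/t$. This causes no trouble once the entrance times $\tau_{1/t}(x,y)$ are computed with \emph{open} balls $B(y,1/t)$ — which is harmless for the value of $\underline{R}(x,y)$, exactly as for $\underline{R}(x)$ (cf.\ the footnote in Section~1) — so that $(x,y)\mapsto\underline{\gamma}(x,y,s)$ becomes upper semicontinuous; this is the convention already implicitly in force in Propositions~\ref{Gdelta4} and~\ref{Gdelta5}. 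Apart from this bookkeeping, the proof is a transcription of that of Proposition~\ref{genericpoints}, with $x$ replaced by the pair $(x,y)$ and $\mu$ by $\mu\times\mu$.
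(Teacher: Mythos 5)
Your proof is correct and follows exactly the route the paper intends: the paper states Proposition~\ref{genericpoints1} without proof, as the product-space transcription of Proposition~\ref{genericpoints} (closedness of the sets $\{\overline{\gamma}(x,y,s)\le\alpha\}$ as in the Claim of Propositions~\ref{Gdelta4}/\ref{Gdelta5}, plus density via a full-support measure from items~II, VII and VIII of Theorem~\ref{teocentral3}), which is precisely what you do. Your open-ball remark for the $\mathfrak{S}_{-}$ case is a legitimate (and welcome) way to settle the semicontinuity point that the paper glosses over with its footnote on $\tau_r$.
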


\section{Appendix}

\begin{proof4}
Since the arguments in both proofs (for Hausdorff and packing dimensions) are similar, we just prove the statement for  $\dim_{P}^+(\mu)$ and $\dim_{P}^-(\mu)$.

\textbf{a)} $\dim_{P}^+(\mu)=\mu\textrm{-}\esssup \overline{d}_{\mu}(x)$. Let $\alpha\ge 0$. We show that if  $\mu\textrm{-}\esssup\overline{d}_{\mu}(x)\leq \alpha$, then $\dim_{P}^+(\mu)\leq \alpha$. In fact, since $\mu\textrm{-}\esssup \overline{d}_{\mu}(x)=\inf\{a\in\R\mid \mu(\{x\mid \overline{d}_{\mu}(x)\leq a\})=1\}\leq \alpha$, one has $\mu(\{x\in X\mid \overline{d}_{\mu}(x)\leq \alpha\})=1$. It follows from the Definition~\ref{HPdim} that $\dim_{P}^+(\mu)\leq \dim_{P}(\{x\in X\mid \overline{d}_{\mu}(x)\leq \alpha\})$. Now, by Corollary 3.20(a) in \cite{Cutler1995}, one has $\dim_{P}(\{x\in X\mid \overline{d}_{\mu}(x)\leq \alpha\})\leq \alpha$. Thus, $\dim_{P}^+(\mu)\leq \alpha$.
   
Conversely, we show that if $\dim_{P}^+(\mu)\leq \alpha$, then $\mu\textrm{-}\esssup\overline{d}_{\mu}(x)\leq \alpha$. Suppose that there exists $\delta>0$ such that $\mu\textrm{-}\esssup\overline{d}_{\mu}(x)\ge\alpha+\delta$; then, by the definition of essential supremum of a measurable function, there exists $E\in \mathcal{B}$, with $\mu(E)>0$, such that for each $x\in E$, $\overline{d}_{\mu}(x)\geq \alpha+\delta/2$. Then, by Corollary 3.20(b) in \cite{Cutler1995}, $\dim_{P}(E)\ge\alpha+\delta/2$, and therefore, $\dim_P^+(\mu)\ge\alpha+\delta$/2. This contradiction shows that $\mu\textrm{-}\esssup\overline{d}_{\mu}(x)\leq \alpha$.

\textbf{b)} $\dim_P^-(\mu)=\mu\textrm{-}\essinf \overline{d}_{\mu}(x)$. Let $\alpha>0$. We show that if  $\mu\textrm{-}\essinf\overline{d}_{\mu}(x)\geq \alpha$, then $\dim_{P}^-(\mu)\geq \alpha$. By the definition of essential infimum of a measurable function, $\mu(A)=1$, where $A:=\{x\in X\mid \overline{d}_{\mu}(x)\geq \alpha\}$. Since, for each $E\in \mathcal{B}$, $\mu(E)=\mu(A\cap E)$ ($E\setminus A\subset A^c$), one may only consider, without loss of generality, those sets $E\in \mathcal{B}$ such that $E\subset A$. Thus, for each $A\supset E\in \mathcal{B}$ so that $\mu(E)>0$, it follows from Corollary 3.20(b) in \cite{Cutler1995} that $\dim_{P}(E)\geq \alpha$. The result is now a consequence of Definition~\ref{HPdim}.

Conversely, we show that if $\dim_{P}^-(\mu)\geq \alpha$, then $\mu\textrm{-}\essinf\overline{d}_{\mu}(x)\geq \alpha$. Suppose that there exists $\delta>0$ such that $\mu\textrm{-}\essinf\overline{d}_{\mu}(x)\leq \alpha-\delta$; then, by the definition of essential infimum of a measurable function, there exists $E\in \mathcal{B}$, with $\mu(E)>0$, such that for each $x\in E$, $\overline{d}_{\mu}(x)\leq \alpha-\delta/2$. Thus, $E\subset \{x\in X\mid \overline{d}_{\mu}(x)\leq \alpha-\delta/2\}=C$ and $\dim_P E\leq \dim_P C$. Then, by Corollary 3.20(a) in \cite{Cutler1995}, $\dim_{P}(E)\leq\alpha-\delta/2$, and therefore, $\dim_P^-(\mu)\leq\alpha-\delta/2$. This contradiction shows that $\mu\textrm{-}\esssup\overline{d}_{\mu}(x)\geq \alpha$.
\end{proof4}

\section*{Acknowledgments}
The first author was partially supported by FAPEMIG (a Brazilian government agency; Universal Project 001/17/CEX-APQ-00352-17). The  second author was partially
supported  by CIENCIACTIVA C.G. 176-2015.

%\bibdata{refs}
%\bibliography{refs}{}
%\bibliographystyle{acm}
%\bibliography{refs}

%\section*{References}

\bibliography{refs}{}

\begin{thebibliography}{10}

\bibitem{Hongwei}
{\sc Bao, H.}
\newblock Dimension, recurrence via entropy and {L}yapunov exponents for
  {$C^1$} map with singularities.
\newblock {\em Ergodic Theory Dynam. Systems 38}, 3 (2018), 801--831.

\bibitem{Lopes2011}
{\sc Baraviera, A.~T., Cioletti, L.~M., Lopes, A.~O., Mohr, J., and Souza,
  R.~R.}
\newblock On the general one-dimensional {$XY$} model: positive and zero
  temperature, selection and non-selection.
\newblock {\em Rev. Math. Phys. 23}, 10 (2011), 1063--1113.

\bibitem{Barbaroux}
{\sc Barbaroux, J.-M., Germinet, F., and Tcheremchantsev, S.}
\newblock Generalized fractal dimensions: equivalences and basic properties.
\newblock {\em J. Math. Pures Appl. (9) 80}, 10 (2001), 977--1012.

\bibitem{Barreiral2001}
{\sc Barreira, L., and Saussol, B.}
\newblock Hausdorff dimension of measures via {P}oincar\'{e} recurrence.
\newblock {\em Comm. Math. Phys. 219}, 2 (2001), 443--463.

\bibitem{Barreira2002}
{\sc Barreira, L., and Saussol, B.}
\newblock Product structure of {P}oincar\'{e} recurrence.
\newblock {\em Ergodic Theory Dynam. Systems 22}, 1 (2002), 33--61.

\bibitem{Bogachev}
{\sc Bogachev, V.~I.}
\newblock {\em Measure theory. {V}ol. {I}, {II}}.
\newblock Springer-Verlag, Berlin, 2007.

\bibitem{BrinKatok}
{\sc Brin, M., and Katok, A.}
\newblock On local entropy.
\newblock In {\em Geometric dynamics ({R}io de {J}aneiro, 1981)}, vol.~1007 of
  {\em Lecture Notes in Math.} Springer, Berlin, 1983, pp.~30--38.

\bibitem{Cutler1993}
{\sc Cutler, C.~D.}
\newblock A review of the theory and estimation of fractal dimension.
\newblock In {\em Dimension estimation and models}, vol.~1 of {\em Nonlinear
  Time Ser. Chaos}. World Sci. Publ., River Edge, NJ, 1993, pp.~1--107.

\bibitem{Cutler1995}
{\sc Cutler, C.~D.}
\newblock The density theorem and {H}ausdorff inequality for packing measure in
  general metric spaces.
\newblock {\em Illinois J. Math. 39}, 4 (1995), 676--694.

\bibitem{Sigmundlibro}
{\sc Denker, M., Grillenberger, C., and Sigmund, K.}
\newblock {\em Ergodic theory on compact spaces}.
\newblock Lecture Notes in Mathematics, Vol. 527. Springer-Verlag, Berlin-New
  York, 1976.

\bibitem{Downarowicz}
{\sc Downarowicz, T., and Karpel, O.}
\newblock Dynamics in dimension zero: a survey.
\newblock {\em Discrete Contin. Dyn. Syst. 38}, 3 (2018), 1033--1062.

\bibitem{Fathi1989}
{\sc Fathi, A.}
\newblock Expansiveness, hyperbolicity and {H}ausdorff dimension.
\newblock {\em Comm. Math. Phys. 126}, 2 (1989), 249--262.

\bibitem{FuLu}
{\sc Fu, X.-C., Lu, W., Ashwin, P., and Duan, J.}
\newblock Symbolic representations of iterated maps.
\newblock {\em Topol. Methods Nonlinear Anal. 18}, 1 (2001), 119--147.

\bibitem{Galatolo}
{\sc Galatolo, S.}
\newblock Dimension via waiting time and recurrence.
\newblock {\em Math. Res. Lett. 12}, 2-3 (2005), 377--386.

\bibitem{Galatolo2011}
{\sc Galatolo, S., Rousseau, J., and Saussol, B.}
\newblock Skew products, quantitative recurrence, shrinking targets and decay
  of correlations.
\newblock {\em Ergodic Theory Dynam. Systems 35}, 6 (2015), 1814--1845.

\bibitem{Gelfert}
{\sc Gelfert, K., and Kwietniak, D.}
\newblock On density of ergodic measures and generic points.
\newblock {\em Ergodic Theory Dynam. Systems 38}, 5 (2018), 1745--1767.

\bibitem{Habil}
{\sc Habil, E.}
\newblock Double sequences and double series.
\newblock {\em IUG Journal of Natural Studies 14}, 1 (2006), 219--233.

\bibitem{Hirayama}
{\sc Hirayama, M.}
\newblock Periodic probability measures are dense in the set of invariant
  measures.
\newblock {\em Discrete Contin. Dyn. Syst. 9}, 5 (2003), 1185--1192.

\bibitem{Hu-Xue-Xu}
{\sc Hu, X.-h., Li, B., and Xu, J.}
\newblock Metric theorem and {H}ausdorff dimension on recurrence rate of
  {L}aurent series.
\newblock {\em Bull. Korean Math. Soc. 51}, 1 (2014), 157--171.

\bibitem{Hurewicz}
{\sc Hurewicz, W., and Wallman, H.}
\newblock {\em Dimension {T}heory}.
\newblock Princeton Mathematical Series, v. 4. Princeton University Press,
  Princeton, N. J., 1941.

\bibitem{Kwietniak}
{\sc Kwietniak, D., Oprocha, P., and Rams, M.}
\newblock On entropy of dynamical systems with almost specification.
\newblock {\em Israel J. Math. 213}, 1 (2016), 475--503.

\bibitem{Li}
{\sc Li, J., and Oprocha, P.}
\newblock Properties of invariant measures in dynamical systems with the
  shadowing property.
\newblock {\em Ergodic Theory Dynam. Systems 38}, 6 (2018), 2257--2294.

\bibitem{Liang}
{\sc Liang, C., Liu, G., and Sun, W.}
\newblock Approximation properties on invariant measure and {O}seledec
  splitting in non-uniformly hyperbolic systems.
\newblock {\em Trans. Amer. Math. Soc. 361}, 3 (2009), 1543--1579.

\bibitem{Lopes2015}
{\sc Lopes, A.~O., Mengue, J.~K., Mohr, J., and Souza, R.~R.}
\newblock Entropy and variational principle for one-dimensional lattice systems
  with a general {\it a priori} probability: positive and zero temperature.
\newblock {\em Ergodic Theory Dynam. Systems 35}, 6 (2015), 1925--1961.

\bibitem{Mattila}
{\sc Mattila, P., Mor\'{a}n, M., and Rey, J.-M.}
\newblock Dimension of a measure.
\newblock {\em Studia Math. 142}, 3 (2000), 219--233.

\bibitem{Munroe}
{\sc Munroe, M.~E.}
\newblock {\em Measure and integration}, second~ed.
\newblock Addison-Wesley Publishing Co., Reading, Mass.-London-Don Mills, Ont.,
  1971.

\bibitem{Oxtoby1963}
{\sc Oxtoby, J.~C.}
\newblock On two theorems of {P}arthasarathy and {K}akutani concerning the
  shift transformation.
\newblock In {\em Ergodic {T}heory ({P}roc. {I}nternat. {S}ympos., {T}ulane
  {U}niv., {N}ew {O}rleans, {L}a., 1961)}. Academic Press, New York, 1963,
  pp.~203--215.

\bibitem{Parthasarathy1961}
{\sc Parthasarathy, K.~R.}
\newblock On the category of ergodic measures.
\newblock {\em Illinois J. Math. 5\/} (1961), 648--656.

\bibitem{PesinT1995}
{\sc Pesin, Y., and Tempelman, A.}
\newblock Correlation dimension of measures invariant under group actions.
\newblock {\em Random Comput. Dynam. 3}, 3 (1995), 137--156.

\bibitem{Pesin1997}
{\sc Pesin, Y.~B.}
\newblock {\em Dimension theory in dynamical systems}.
\newblock Chicago Lectures in Mathematics. University of Chicago Press,
  Chicago, IL, 1997.
\newblock Contemporary views and applications.

\bibitem{Ren}
{\sc Ren, X.}
\newblock Periodic measures are dense in invariant measures for residually
  finite amenable group actions with specification.
\newblock {\em Discrete Contin. Dyn. Syst. 38}, 4 (2018), 1657--1667.

\bibitem{Riquelme}
{\sc Riquelme, F.}
\newblock Ruelle's inequality in negative curvature.
\newblock {\em Discrete Contin. Dyn. Syst. 38}, 6 (2018), 2809--2825.

\bibitem{Rogers}
{\sc Rogers, C.~A.}
\newblock {\em Hausdorff measures}.
\newblock Cambridge Mathematical Library. Cambridge University Press,
  Cambridge, 1998.
\newblock Reprint of the 1970 original, With a foreword by K. J. Falconer.

\bibitem{Saussol}
{\sc Saussol, B.}
\newblock Recurrence rate in rapidly mixing dynamical systems.
\newblock {\em Discrete Contin. Dyn. Syst. 15}, 1 (2006), 259--267.

\bibitem{Sigmund1970}
{\sc Sigmund, K.}
\newblock Generic properties of invariant measures for {A}xiom {${\rm A}$}\
  diffeomorphisms.
\newblock {\em Invent. Math. 11\/} (1970), 99--109.

\bibitem{Sigmund1971}
{\sc Sigmund, K.}
\newblock On the prevalence of zero entropy.
\newblock {\em Israel J. Math. 10\/} (1971), 281--288.

\bibitem{Sigmund1974}
{\sc Sigmund, K.}
\newblock On dynamical systems with the specification property.
\newblock {\em Trans. Amer. Math. Soc. 190\/} (1974), 285--299.

\bibitem{Varandas}
{\sc Varandas, P.}
\newblock Entropy and {P}oincar\'{e} recurrence from a geometrical viewpoint.
\newblock {\em Nonlinearity 22}, 10 (2009), 2365--2375.

\end{thebibliography}
\bibliographystyle{acm}
%\bibliography{refs}

\end{document}